\let\@nohyperfootnotemark\@footnotemark
\let\@nohyperfootnotetext\@footnotetext
\def\nohyperfootnote{\@ifnextchar[\@xfootnote{\stepcounter\@mpfn
     \protected@xdef\@thefnmark{\thempfn}%
     \@nohyperfootnotemark\@nohyperfootnotetext}}
\newcommand\blfootnote[1]{%
  \begingroup
  \renewcommand\thefootnote{}\nohyperfootnote{#1}%
  \addtocounter{footnote}{-1}%
  \endgroup
}
\theoremstyle{definition}
\newcommand{\fakephantomsection}{%
	\Hy@MakeCurrentHref{\@currenvir.\the\Hy@linkcounter}
	\Hy@raisedlink{\hyper@anchorstart{\@currentHref}\hyper@anchorend}%
}
\newcommand\Pp{\mathcal{P}}
\newcommand\pp{\mathbf{p}}
\newcommand\Zz{\mathcal{Z}}
\newcommand\Zb{\mathbf{Z}}
\newcommand\Yy{\mathcal{Y}}
\newcommand\Mmp{\mathcal{M}_p}
\newcommand\WW{\mathcal{W}}
\newcommand\mg[2]{W(#1,#2)}
\newcommand\lP{\mathbb{P}} 
\newcommand\lPn{\mathbb{P}^{(b_n)}}
\newcommand\lQ{\mathbb{P}_\omega}
\newcommand\lQn{\mathbb{P}_{\omega}^{(b_n)}}
\newcommand\lQt{\mathbb{P}_\omega}
\newcommand\lQtn{\mathbb{P}_{\omega}^{(b_n)}}
\newcommand\lE{\mathbb{E}}
\newcommand\lEn{\mathbb{E}^{(b_n)}}
\newcommand\lEQ{\mathbb{E}_\omega}
\newcommand\lEQn{\mathbb{E}_{\omega}^{(b_n)}}
\newcommand\lEQt{\mathbb{E}_{\omega}}
\newcommand\lEQtn{\mathbb{E}_{\omega}^{(b_n)}}
\newcommand\sP{\mathbf{P}}
\newcommand\sE{\mathbf{E}}
\newcommand\lQD{\mathbb{Q}_{\omega}}
\newcommand\lQc{\mathbb{P}_{\bar\omega}}
\newcommand\lEQc{\mathbb{E}_{\bar \omega}}
\newcommand\lQDc{\mathbb{Q}_{\bar{\omega}}}
\newcommand\Indic[1]{\Ind_{\{#1\}}}
\newcommand\FFs{\hat\FF}
\newcommand\card{\#}
\newcommand\tree{\mathcal{U}}
\newcommand\Ess{\mathcal{E}}
\newcommand\MA{W} 
\newcommand\MD{\partial W} 
\newcommand\ws{\bar \omega}
\DeclareMathOperator{\Anc}{Anc}
\DeclareMathOperator{\Desc}{Desc}
\DeclareMathOperator{\dom}{dom}
\DeclarePairedDelimiterX\ip[2]{\langle}{\rangle}{#1,#2}
\DeclareNormlike\oqv[] 
\theoremstyle{definition}
\title{Probability tilting of compensated fragmentations}
\author{%
  Quan Shi\footnote{University of Mannheim, Mannheim, 68131, Germany; \email{quanshi.math@gmail.com}}%
  \and
  Alexander R. Watson\footnote{University of Manchester, Manchester, M13 9PL, UK; \email{alex.watson@manchester.ac.uk}}}
\begin{document}

  \maketitle
  
  \begin{abstract}
    Fragmentation processes are part of a broad class of models
    describing the evolution of a system of particles which
    split apart at random.
    These models are widely used in biology, materials science and nuclear
    physics, and their asymptotic behaviour at large times is interesting
    both mathematically and practically.
    The spine decomposition is a key tool in its study.
    In this work, we consider the class of compensated
    fragmentations, or homogeneous growth-fragmentations, recently defined by Bertoin.
    We give a complete spine decomposition of these processes in terms of 
    a L\'evy process with immigration, and
    apply our result to study the asymptotic properties of the
    derivative martingale.%
    \blfootnote{%
      \textbf{Keywords}: Compensated fragmentation, growth-fragmentation, additive martingale,
      derivative martingale, spine decomposition, many-to-one theorem.
    }%
    \blfootnote{%
      \textbf{MSC 2010}: 60G51, 60J25, 60J80, 60G55.
    }%
  \end{abstract}

  \section{Introduction}
  \label{s:intro}
    
  Fragmentation processes offer a random model for particles which break apart
  as time passes. Informally, we imagine a single particle, characterised by its mass,
  which after some random time splits into two or more daughter particles, distributing its
  mass between them according to some law. The new particles act independently
  of one another and evolve in the same way.
  Variants of such processes have been studied over many years, with applications
  across the natural sciences \cite{BCP-frag,
  BA-cell-growth,Cx-size-dist}.
  One large class of fragmentation models,
  encompassing the so-called homogeneous fragmentation processes,
  has been particularly successful, and a comprehensive discussion can be found
  in the book of \citet{Ber-fc}. 
  
  Compensated fragmentation processes were defined by \citet{Ber-cfrag}
  as a generalisation of
  homogeneous fragmentations,
  and permit high-intensity fragmentation and Gaussian fluctuations of the sizes of fragments.
  The processes arise as the limits of homogeneous fragmentations
  under dilation \cite[Theorem 2]{Ber-cfrag},
  and may also be thought of as being related to
  a type of branching Lévy process, for which the branching occurs at the jump times of the process.
  From this viewpoint, they may be regarded as the simplest example in the class of 
  so-called Markovian growth-fragmentation processes \cite{BeMGF},
  and for this reason they are sometimes called \emph{homogeneous} growth-fragmentation processes.
  Other examples in the 
  class of Markovian growth-fragmentations
  can be obtained by slicing planar random maps with boundary,
  as discovered by \citet{BBCK-maps}, or by considering
  the destruction of an infinite recursive tree, as in \citet{BB-ou}.
  
  \skippar
  The main purpose of this work is to give a complete spine decomposition
  for compensated fragmentation processes.  
  This is motivated by the many applications that such decompositions have
  found in proving powerful results
  across the spectrum of branching process models.
  Since the foundational work of \citet{LPP-LlogL}
  on `conceptual' proofs of the $L \log L$ criterion for Galton--Watson processes,
  a large literature has emerged, of which we offer here only a selection,
  focusing on the applications we have in mind.  
  
  In the context of branching random walks, the spine decomposition has
  been used to prove martingale convergence theorems and to study
  the asymptotics, fluctuations and genealogy of the largest particle;
  see \cite{Shi-BRW} for a detailed monograph with historical references.
  For branching Brownian motion, spine techniques were
  used by \citet{CR-kpp} to describe asymptotic presence
  probabilities, and by
  \citet{Kyp-FKPP} and \citet{RY-dm}
  to study solutions of reaction-diffusion equations of
  Fisher--Kolmogorov--Petrowski--Piscounov (FKPP) type.
  In the context of superprocesses, we mention the study of strong laws of large numbers
  by \citet{EKW-slln}, which also contains a thorough review of the literature.
  
  Spine techniques have lent themselves well to the study of
  homogeneous (pure) fragmentation processes.
  Convergence theorems were proved by \citet{BR-disc}, and
  the decomposition was used by \citet{Haa-loss} to study the fragmentation equation,
  \citet{HKK-lln} for the proof of strong laws of large numbers,
  and \citet{BHK-FKPP} to look at solutions of FKPP equations.
  Returning to the topic of growth-fragmentation processes,
  \citet[\S 4]{BBCK-maps} established a spine decomposition and used it in order to study
  certain random planar maps, and the results presented in this
  article overlap with theirs under certain parameter choices (see \autoref{r:mto}\ref{i:mto}.)
  \citet[\S 3.2]{BerSte} gave an explicit decomposition
  for compensated fragmentation processes in
  the case of finite fragmentation rate and applied it to
  the phenomenon of local explosion, and
  \citet[\S 6]{BW-gfe} made implicit use of a
  spine decomposition in studying the growth-fragmentation equation.
  Since the first appearance of this work, \citet[Lemma~2.3]{BM-mcbLp} 
	gave a version of the spine decomposition in the more general setting of branching L\'evy processes (see \autoref{r:BM}\ref{i:BM}.)

  \medskip\noindent  
  Our object of study is the compensated fragmentation process
  $\mathbf{Z} = (\mathbf{Z}(t), t\ge 0)$,
  where $\mathbf{Z}(t) = (Z_1(t),Z_2(t),\dotsc)$ is an element of
  $\ell^{2\downarrow} = \{ \mathbf{z} = (z_1,z_2,\dotsc) : z_1 \ge z_2 \ge \dotsb \ge 0, \, \sum_{i=1}^\infty z_i^2<\infty \}$.
  The values $Z_1(t),Z_2(t),\dotsc$
  are regarded as the ranked sizes of \emph{fragments}
  as seen at time $t$.
  Unless otherwise specified, we will assume that $\mathbf{Z}(0) = (1,0,\dotsc)$.
  
  The law of $\mathbf{Z}$ is characterised by a triple $(a,\sigma,\nu)$
  of \emph{characteristics},
  where $a \in \RR$, $\sigma \ge 0$ and $\nu$ is a non-trivial measure on the space
  \[
  \Pp =\mbigl\{ \mathbf{p} = (p_1,p_2,\dotsc) : p_1\ge p_2 \ge \dotsb\ge 0, \sum_{i=1}^\infty p_i \le 1 \mr\},
  \]
  satisfying the moment condition 
  \begin{equation}\label{e:nu}
  	\int_{\Pp} (1-p_1)^2 \,\nu(\dd\pp) < \infty .
  \end{equation}
  Loosely speaking, $a$ describes deterministic growth or decay of the fragments and
  $\sigma$ describes the magnitude of Gaussian fluctuations
  in their sizes. The measure $\nu$ is called the
  \emph{dislocation measure}, and $\nu(\dd\pp)$ represents the rate at which
  a fragment of size $x$ splits into a cloud of particles of sizes $xp_1,xp_2,\dotsc$.
  
  The connection between $\mathbf{Z}$ and the triple is given by
  the \emph{cumulant} $\kappa$, 
  which is defined by the equation
  $e^{t \kappa(q)} = \lE\mbigl[\sum_{i\ge 1} Z_i(t)^q\mr]$.
  It is given by the following expression, akin to the L\'evy--Khintchine formula for
  L\'evy processes:
  \begin{equation}\label{e:kappa}
  	\kappa(q) 
  	= 
  	\frac{1}{2}\sigma^2q^2 + aq 
  	+ \int_{\Pp} \mBigl[ \sum_{i\ge 1} p_i^q - 1 + (1-p_1)q \mr] \, \nu(\dd \pp), \qquad q\in \RR.  \end{equation}
  The function $\kappa$ takes values in $\RR\cup\{\infty\}$.
  We regard
  $\dom \kappa := \{ q\in \RR : \kappa(q) < \infty \}$ as the function's domain. 
  Condition \eqref{e:nu} entails that 
  \begin{equation}\label{eq:dom}
  q \in \dom \kappa \quad \text{if and only if}\quad  \int_{\Pp}  \sum_{i\ge 2} p_i^q  \, \nu(\dd \pp) <\infty ,
  \end{equation}
  and that $[2,\infty) \subset \dom \kappa$. 
  One notable property of $\kappa$ is that it is strictly convex and smooth on the interior
  of $\dom\kappa$.
  
  If the measure $\nu$ satisfies the stronger moment condition
  $\int_{\Pp} (1-p_1) \, \nu(\dd\pp)<\infty$,
  and $\sigma = 0$, 
  then $\kappa$ is the cumulant of a homogeneous fragmentation process
  $\mathbf{Z}$ in the sense of \cite{Ber-fc}, with additional deterministic exponential
  growth or decay.
  
  \skippar
  We shall prove a spine decomposition for $\mathbf{Z}$
  under a change of measure.
  In particular, 
  for $\omega \in \dom \kappa$, we define the \emph{(exponential) additive martingale} $\mg{\omega}{\cdot}$
  as follows:
  \begin{equation*}
  	\mg{\omega}{t} = e^{-t\kappa(\omega)} \sum_{i\ge 1} Z_i(t)^\omega, \wh t \ge 0.
  \end{equation*}
  Since this is a unit-mean martingale
  (see the forthcoming \autoref{l:Zbar-bp}),
  we may define a new, `tilted' probability measure
  $\lQ$, as follows. Fix $t \ge 0$, and let $A$ be an event depending only on the
  path of $\mathbf{Z}$ up to time $t$.
  Then, define
  \begin{equation*} \lQ(A) = \lE[ \Ind_A \mg{\omega}{t} ] . \end{equation*}
  Our first main result is \autoref{t:mto}, in which we show
  that under $\lQ$, the process $\mathbf{Z}$
  may be regarded as
  the exponential of a single spectrally negative Lévy process (the \emph{spine})
  with Laplace exponent
  $\kappa(\cdot + \omega) - \kappa(\omega)$, onto whose jumps are grafted independent
  copies of $\mathbf{Z}$ (under the original measure $\lP$).
  This is the \emph{spine decomposition}, also known as a
  \emph{full many-to-one theorem}.
  
  In order to illustrate the power of this spine decomposition,
  we study the \emph{derivative martingale} associated with $\mathbf{Z}$. For $\omega$ in the interior of $\dom \kappa$, this is defined by
  \begin{equation}\label{}
  	\MD(\omega, t)
  	= \frac{\partial }{\partial \omega}\mg{\omega}{t}
  	= e^{-t \kappa(\omega)}
  	\sum_{i\ge 1} \mbigl(  -t \kappa'(\omega) + \log Z_i(t) \mr)
  	Z_i(t)^\omega, \qquad t\geq 0.
  \end{equation}
  Since this martingale can take both positive and negative values,
  it is not immediately obvious whether its limit as $t\to\infty$
  exists.
  
  Using our decomposition, we prove our second main result,
  \autoref{t:DerMart}, which states that the derivative martingale converges
  to a strictly negative limit under certain conditions.
  This limit is closely related to the process representing the largest
  fragment of the compensated fragmentation.
  Our theorem is the counterpart of results on the asymptotics of the derivative martingale
  which have been found in the context
  of homogeneous (pure) fragmentation processes \cite{BR-disc},
  branching random walks \cite{BK-mc,Shi-BRW}
  and branching Brownian motion \cite{Kyp-FKPP}.
  In the case of compensated fragmentation processes,
  \citet{Dadoun:agf} studied the discrete-time skeletons of the
  derivative martingale via a branching random walk, and used their convergence to obtain
  asymptotics for the largest fragment.
  Our work complements and extends this by showing the almost sure
  convergence of the martingale in continuous time and
  showing that the expectation of the terminal value is infinite;
  we also obtain somewhat weaker conditions.

  
  \skippar
  This work lays the foundations for future research in two principal directions.
  The first concerns more general Markovian growth-fragmentations, and in particular
  we anticipate that it should be possible to extend
  the spine decomposition to
  growth-fragmentations based on generalised Ornstein--Uhlenbeck processes,
  as studied in \cite{Shi-ougf,BB-ou}.
  The second concerns applications for the homogeneous processes studied here.
  Our asymptotic
  results for the derivative martingale
  may be used to study the size of the largest fragment and
  the existence and uniqueness of travelling wave solutions to
  FKPP equations, much as in \cite{BHK-FKPP}.
  
  \paragraph{Organisation of this work} In \autoref{s:cf}, we give a rigorous
  definition of the branching L\'evy process, outlining the truncation
  argument of \cite{Ber-cfrag} and simultaneously define a new labelling
  scheme for its particles.
  In \autoref{s:backward}, we consider the measure $\lQ$
  just presented, additionally distinguishing a single particle
  by picking from those particles alive at time $t$ in a size-biased way.
  In \autoref{s:forward}, we present a complete construction of a Markov process
  with a single distinguished particle, which we claim gives the law of the
  process $\mathbf{Z}$ with distinguished particle under $\lQ$; this claim is then
  proven in \autoref{s:fb}.
  Finally, we discuss the asymptotic properties of the derivative martingale in
  \autoref{s:dm}.

  \section{The branching L\'evy process}
  \label{s:cf}
 
  Our goal in this section is to establish a genealogical structure for the
  compensated fragmentation
  process $\mathbf{Z}$, that is to represent it as a random infinite marked tree.
  This is what allows us to study the spine decomposition. 
  To be specific, we will define a family of L\'evy processes, $(\Zz_u, u\in \tree)$,
  labelled by the nodes
  of a tree $\tree$. For $t\ge 0$, let $\tree_t$ be the set of
  individuals present at time $t$. We will be able to list the elements of
  $\tree_t$ by $u_1,u_2,\dotsc$ such that $\Zz_{u_1}(t) \ge \Zz_{u_2}(t) \ge \dotsb$.
  The compensated fragmentation process at time $t$ is then given by
  \[ \mathbf{Z}(t) = (\exp(\Zz_{u_1}(t)),\exp(\Zz_{u_2}(t)), \dotsc). \]
  We also define a related point measure-valued process, called the \emph{branching L\'evy process}:
  \[\Zz(t) = \sum_{u\in \tree_t} \delta_{\Zz_u(t)}. \]
 One can easily recover the compensated fragmentation process $\mathbf{Z}$ from $\Zz$.
  Therefore, for convenience, we shall always work with $\Zz$ from now on and state all our results in terms of $\Zz$.

  \subsection{L\'evy processes}
  \label{s:LP}

  Since our main object of study is a branching L\'evy process, it is unsurprising that
  L\'evy processes play a key role. We give a short summary of the relevant definitions
  and properties.

  A stochastic process $\xi = (\xi(t), t\ge 0)$
  under a probability measure $\sP$ is called a \emph{L\'evy process} if it has
  stationary, independent increments and c\`adl\`ag paths,
  and satisfies $\xi(0) = 0$ almost surely. The process
  $\xi$ is said to be \emph{spectrally
  negative} if the only points of discontinuity of its paths are negative jumps.
  The usual way to characterise the law of such a process is through its Laplace
  exponent; this is a function $\Psi \from \RR \to \RR\cup\{\infty\}$, such that
  for every $t\ge 0$, $\sE[ e^{q\xi(t)} ] = e^{t\Psi(q)}$. It is well-known that
  $\Psi$ satisfies the so-called \emph{L\'evy--Khintchine formula}, as follows:
  \begin{equation}
    \label{e:lk}
    \Psi(q) = \frac{1}{2}\gamma^2 q^2 + \mathtt{a}q + \int_{(-\infty,0)} \mbigl[ e^{qx} - 1 - qx\Indic{x>-1} \mr] \, \Pi(\dd x), \qquad q\in\RR,
  \end{equation}
  and $\Psi(q) < \infty$ if $q\ge 0$.
  Here, $\mathtt{a} \in \RR$ is called the \emph{centre} of $\xi$, $\gamma \ge 0$ is the
  \emph{Gaussian coefficient}, and $\Pi$ is a measure, called the \emph{L\'evy measure},
  on $(-\infty,0)$, which satisfies the moment condition
  $\int_{(-\infty,0)} \min \{ 1, x^2\} \, \Pi(\dd x) <\infty$.

  The classification of L\'evy processes is made
  more precise by the \emph{L\'evy--It\^o decomposition}, which we now describe.
  Let $\mathtt{M}$ be a Poisson random measure on $[0,\infty)\times (-\infty,0)$
  with intensity measure $\text{Leb}\times\Pi$.
  Let $B = (B(t) , t \ge 0)$ be a standard Brownian motion
  independent of $\mathtt{M}$.
  Then, a L\'evy process $\xi$ with Laplace exponent $\Psi$ can
  be constructed as:
  \[
    \xi(t) = \gamma B(t) + \mathtt{a}t
    + \int_{[0,t]\times (-\infty,-1]} x \, \mathtt{M}(\dd s, \dd x)
    + \lim_{\epsilon\downto 0} \int_{[0,t]\times (-1,-\epsilon)} x\, \bigl[ \mathtt{M}(\dd s, \dd x) - \dd s \Pi(\dd x)\bigr]
    ,
  \]
  and the limit of compensated small jumps which appears as the last term
  is guaranteed to exist in the $L^2$ sense.
  We refer to the measure $\mathtt{M}$ as the \emph{jump measure} of $\xi$.

  Standard works on this class of processes are the books \cite{Ber-Levy,Kyp2,Sato}.
  We mention here only one additional feature which will be useful in our
  study of the spine decomposition.
  If $\Psi(\omega)<\infty$, then the process
  $M_\omega(t) = e^{\omega \xi(t) - t\Psi(\omega)}$ is a unit-mean martingale
  under $\sP$ for the natural filtration of $\xi$,
  and if we define a new measure $\sP_{\omega}$ via
  \[
    \sP_{\omega}(A) \coloneqq \sE[ \Ind_A M_\omega(t) ] ,
    \qquad A \in \sigma(\xi(s),\, s\le t), \; t \ge 0,
  \]
  then $\xi$ under $\sP_{\omega}$ is a spectrally negative 
  L\'evy process \cite[\S 8.1]{Kyp2}. Its Laplace exponent is the function $\Ess_\omega \Psi$
  defined by
  \[ \Ess_{\omega}\Psi(q) \coloneqq \Psi(q+\omega) - \Psi(\omega), \qquad q\ge 0. \]
  This new process has
  centre
  $\mathtt{a}_\omega \coloneqq \mathtt{a} + \gamma^2 \omega + \int_{(-1,0)} x(e^{\omega x}-1)\, \Pi(\dd x)$,
  Gaussian coefficient $\gamma$,
  and L\'evy measure $\Pi_\omega(\dd x) \coloneqq e^{\omega x} \Pi(\dd x)$.
  The function $\Ess_\omega \Psi$ is referred to as the
  \emph{Esscher transform} of $\Psi$.

  \subsection{Construction and truncation of the branching L\'evy process}  
  \label{s:construct-truncate}\label{s:labels}

  In this section, we give a rigorous definition of the branching Lévy process.
  Our presentation is inspired by \citet{Ber-cfrag}, and the main idea is first to define,
  given a sequence of numbers $b_n \ge 0$,
  a collection of \emph{truncated} processes $\bar{\Zz}^{(b_n)}$
  representing the positions, and attached labels, of particles which do not
  land `too far' (i.e., at a distance greater than $b_n$)
  from their parent.
  This is necessary since the rate of fragmentation is, in general, infinite.
  These processes will be constructed such that they are consistent with one another,
  in a sense
  which will shortly be made precise, and such that taking $n\to\infty$ reveals all of the
  particles. The main innovation compared to \cite{Ber-cfrag} is the
  inclusion of labels for the particles,
  and this is what allows us to study the spine decomposition.
  
 Roughly speaking, we will use a Crump-Mode-Jagers type labelling scheme, in which
 the closest of the `offspring' of a particle at each branching event retains the parent's identity;
 see  \cite{Jag-gbp} for a discussion of this so-called `general branching process' framework.
 Our system is reminiscent of the
 one adopted in \cite{BeMGF}, which also uses immortal particles
 with labels based on the size of the jumps, but for which the labels are purely
 generational. 
 With this construction, one will also be able to define stopping lines; a particular case (the stopped martingale in \autoref{s:sm}) will play an essential role in the proof of \autoref{t:DerMart}.   
 We mention here also an alternative approach to the genealogy
 by \citet{BM-bLp}, based upon a restriction to dyadic rational times,
 which is of quite a different style.

  Readers who are already familiar with the construction
  of \cite{Ber-cfrag} may wish to skip this
  section on first reading, and simply assume the existence of
  a set of particle labels which is consistent under truncation.
  
  \skippar
  Let us introduce some notation.
  The set of labels will be given by
  $\tree = \cup_{j\ge 0} (\NN^3)^j$, where we use the convention
  $(\NN^3)^0 = \{\varnothing\}$,
  and we will denote elements of this set in the following way: if $u_i \in \NN^{3}$
  for $i=1,\dotsc,I$, then we will write $(u_1,u_2,\dotsc,u_I)$ as $u_1 u_2 \dotsb u_I$.  
  The label $\varnothing$ represents the progenitor particle which is alive at time $0$,
  sometimes called the `Eve' particle; and each offspring of the particle with label $u \in \tree$
  receives a label $u(m,k,i)$, for some choice of $m,k,i$ which will be explained shortly.
  
 Let $(a,\sigma,\nu)$ be a triple of characteristics
  satisfying the conditions outlined in the introduction,
  and let $\kappa$ be the cumulant given by \eqref{e:kappa}.
  We assume throughout
  that $\nu(\{\mathbf{0}\}) = 0$, where $\mathbf{0} \coloneqq (0,\dotsc)$
  Our results will still hold without this condition,
  but it simplifies notation and proofs by allowing us to ignore the possibility that particles are killed outright.

   Let $(b_n)_{n\ge 0} \subset [0,\infty)$ be a strictly increasing sequence such 
   that $b_0 = 0$ and $b_n \to \infty$; this will be a fixed sequence of truncation
   levels, which will be assumed given throughout this work.
   For $b\ge 0$, we
   let $k_b\from \Pp\to\Pp$ be given by
   \begin{equation}\label{e:kb}
    k_b(p_1,p_2,p_3,\dotsc) = (p_1,p_2\Indic{p_2 > e^{-b}},p_3\Indic{p_3> e^{-b}},\dotsc) , 
    \end{equation}
   and define the \emph{truncated dislocation measure}
   via the pushforward $\nu^{(b)} = \nu \circ k_b^{-1}$.
   
   \skippar
  \label{s:intuitive}%
  We now consider $n\ge 0$ to be fixed; we are going to define the \emph{branching Lévy process
  truncated at level $b_n$}.
  Since the labelling is a little more complex than usual, let us first give an intuitive
  description of this process.
  The process begins at time zero with a single particle having label $\varnothing$, and positioned
  at the origin.
  The spatial position of the particle
  follows a spectrally negative Lévy process $\xi_{\varnothing}$ with
  Laplace exponent $\Psi^{(b_n)}$ defined by
  \[ \Psi^{(b_n)}(q) = \frac{1}{2}\sigma^2q^2 + \mBigl( a + \int_{\Pp\setminus\Pp_1} (1-p_1)\nu^{(b_n)}(\dd \pp) \mr)q
    + \int_{\Pp_1} \mbigl[ p_1^q-1+(1-p_1)q \mr] \, \nu^{(b_n)}(\dd \pp),
  \]
  where
  $\Pp_1$ is the set of sequences with at most one non-zero element,
  \[ \Pp_1 = \{ \pp \in \Pp: p_2 = 0 \}. \]
  Crucially, the moment condition \eqref{e:nu}
  implies that the pushforward
  $\nu^{(b_n)}\rvert_{\Pp_1} \circ \log^{-1}$
  is indeed a L\'evy measure, so $\Psi^{(b_n)}$ is the Laplace exponent
  of a L\'evy process.
  Moreover, $\nu^{(b_n)}$ restricted to $\Pp\setminus \Pp_1$
  is finite.
  
  At time $T_{\varnothing,1}$, having an exponential distribution
  with parameter 
  $\lambda_{b_n} \coloneqq \nu^{(b_n)}(\Pp\setminus\Pp_1)<\infty$, the particle
  $\varnothing$ branches. Take $\pp$ to be a random variable
  with distribution $\nu^{(b_n)}\rvert_{\Pp\setminus\Pp_1}/ \lambda_{b_n}$,
  and scatter particles in locations $\xi_{\varnothing}(T_{\varnothing,1}-) + \log p_i$,
  for $i\ge 1$.
  The particle in location $\xi_{\varnothing}(T_{\varnothing,1}-) + \log p_1$ retains the
  label $\varnothing$.
  Let us momentarily define the \emph{level} of
  a child as the unique natural number $m$ such that
  $e^{-b_{m-1}} \ge p_i > e^{-b_m}$ .
  The particles other than $\varnothing$ receive labels
  $\varnothing(m,1,j) = (m,1,j)$, where $m$ is the level, and
  $j$ is the minimal
  natural number such the initial location of $(m,1,j)$
  in $\RR$ is less than or equal to that of $(m,1,j-1)$
  (recall that particles are scattered downwards.)
  
  After this first branching event, the particle $\varnothing$ continues
  to perform a Lévy process, and then at time $T_{\varnothing,1}+T_{\varnothing,2}$,
  with $T_{\varnothing,2}$ independent of and
  equal in distribution to $T_{\varnothing,1}$, it branches
  again. Particles are scattered according to the same rule.
  This time, a child of level $m$ receives a label of form
  $(m,2,j)$ if children of level $m$ were previously seen (with the $2$
  indicating that this is the second such branching event
  of $\varnothing$ giving rise to particles in class $m$.) If not,
  a child of level $m$ receives a label of form $(m,1,j)$.
  The evolution of $\varnothing$ proceeds in this manner.
  
  Meanwhile, each particle $u$ which was already born has the same evolution.
  It performs a Lévy process $\xi_{u}$ with the same law as $\xi_{\varnothing}$,
  and after waiting a period $T_{u,1}$, independent of and
  equal in distribution to $T_{\varnothing,1}$, it branches.
  Its children are scattered in the same way as before, but they receive labels $u(m,1,j)$;
  and, subsequently, at the $k$-th branching event of $u$ giving rise to children of
  level $m$, these children receive labels $u(m,k,j)$.
  
  A sketch illustrating the labelling scheme appears in \autoref{f:Z}.
  
  \skippar Having established the main idea, we now give a rigorous definition of the
  branching Lévy process truncated at level $b_n$.

  Strictly speaking, all the symbols we define in the next few paragraphs
  should have an annotation of the sort $\cdot^{(b_n)}$, but this would be rather cumbersome.
  The notations $a_\cdot$, $\xi_\cdot$, $N_\cdot$, $T_\cdot$, $\Delta^{(\cdot)}$ and $\mathcal{Q}_{\cdot}$,
  shortly to be defined,
  will not appear again in the sequel, so we warn that they depend implicitly on $n$;
  and all other notations will either receive an annotation or will turn out not to depend on $n$ after all.
  
  \newcommand\ML{\mathsf{ML}}
  \newcommand\Lu{\mathsf{L}}

  Emulating \cite{Ber-cfrag}, we define the following random elements.
  \begin{itemize}
    \item 
      $(\xi_u)_{u\in \tree}$, a family of i.i.d.\ Lévy processes with Laplace exponent $\Psi^{(b_n)}$.
    \item
      $(N_u)_{u\in\tree}$, a family of i.i.d.\ Poisson processes $(N_u(t), t\ge 0)$ with rate 
      $\lambda_{b_n} = \nu^{(b_n)}(\Pp\setminus\Pp_1)$.
      Let us denote by $(T_{u,p})_{p\ge 0}$ the inter-arrival times of $N_u$, so that
      $T_{u,0} = 0$ and $T_{u,p} = \inf\{ t\ge T_{u,p-1} : N_u(t) = N_u(T_{u,p-1})+1 \} - T_{u,p-1}$, for $p\ge 1$.
    \item
      $(\Delta^{(u,p)})_{u\in\tree, p\ge 1}$, a family of i.i.d.\ elements of $\Pp\setminus\Pp_1$ with distribution
      $\nu^{(b_n)}\rvert_{\Pp\setminus\Pp_1}/ \lambda_{b_n}$.
  \end{itemize}
  In the above list, $\xi_u$ represents the motion of the particle with label $u$,
  ignoring the times at which it branches; $N_u$ jumps at the branching times of $u$; and
  the mass-partition
  $\Delta^{(u,p)} = (\Delta^{(u,p)}_i)_{i\ge 1}$
  encodes the relative locations of $u$ and its children at the $p$-th time that $u$ branches. Moreover, these three families are independent one of the
  others. 
  
  Our first step is to divide the $\Delta^{(u,p)}_\cdot$ into (disjoint) classes, which correspond to the
  truncation level of the children they represent.
  We make the definition that:
  \begin{equation}\label{e:L} 
    \Lu(y) \text{ is the unique } m \ge 1 \text{ such that }e^{-b_{m-1}} \ge y > e^{-b_m}.
  \end{equation}
  For $l\ge 1$, let $\Delta^{(u,p,l)} = \mbigl(\Delta^{(u,p)}_j : j\ge 2 \text{ such that } \Lu(\Delta^{(u,p)}_j) = l\mr)^{\downarrow}$,
  where $\cdot^{\downarrow}$ indicates decreasing rearrangement of the sequence.
  For every $l\ge 1$, we regard the finite sequence $\Delta^{(u,p,l)}$ as being an element of $\Pp$, 
  by filling the tail with zeroes. Note that $\Delta^{(u,p,l)} = \mathbf{0}$ for all $l>n$. 
  
  Next, for each label $u$, we will give
  definitions for certain random elements. These are:
  $a_u \in[0,\infty)$, the birth time of $u$;
  $\Zz_u = (\Zz_u(t), t\ge 0)$, with $\Zz_u(t) \in \RR$ representing
  the position of $u$ at time $t$;
  and 
  \[K_u^{(b_n)} = (K_u^{(b_n)}(t), t\ge 0), ~\text{with}~ K_u^{(b_n)}(t) = (K_u^{(b_n)}(t,l) : l\ge 1) \in (\NN\cup \{0\})^{\NN}. \]
  The latter sequence has the interpretation that
  $K_u^{(b_n)}(t,l)$ is the number of branching events which particle $u$ has had up to time
  $t$ in which at least one child with label of the form $u(l,k,i)$, for any $k,i \in \NN$, was born.
  
  For the particle $\varnothing$, let
  \begin{eqnarr*}
    a_\varnothing = 0; 
    \qquad \Zz_\varnothing(t) &=& \xi_\varnothing(t) + \sum_{p=1}^{N_\varnothing(t)} \log\Delta^{(\varnothing,p)}_1, \quad t \ge 0; \\
    K_\varnothing^{(b_n)}(t, l) &=& \sum_{p=1}^{N_\varnothing(t)} \Indic{\Delta^{(\varnothing,p,l)} \ne \mathbf{0}}, \quad t\ge 0,\,l\ge 1.
  \end{eqnarr*}
  
  For the remaining particles, we first need a bit of notation:
  let 
  \[ 
    \mathcal{Q}_{u,m}(k) = \inf \mBigl\{ P \in \NN : \sum_{p=1}^{P} \Indic{\Delta^{(u,p,m)} \ne \mathbf{0}} = k \mr\},
    \qquad k \in \NN,
  \]
  with the convention that $\inf\emptyset = \infty$. Thus, $\mathcal{Q}_{u,m}(k)$ is the number of
  birth events of $u$ which take place until the $k$-th event at which the sequence $\Delta^{(u,p)}$ contains
  at least one element $y$ with $\Lu(y) = m$.
  Fix $u\in\tree$ and $(m,k,i) \in \NN^{3}$ arbitrary, and write $u' = u(m,k,i)$. Then let:
  \begin{eqnarr*}
    a_{u'} &=& a_u + \sum_{p=1}^{\mathcal{Q}_{u,m}(k)} T_{u,p}; \\
    \Zz_{u'}(t) &=& \Zz_{u}(a_{u'}-) + \log \Delta^{(u,\mathcal{Q}_{u,m}(k),m)}_i
    + \xi_{u'}(t-a_{u'}) 
    + \sum_{p=1}^{N_{u'}(t-a_{u'})} \log \Delta^{(u',p)}_1 , \qquad t\ge a_{u'};
    \\
    K_{u'}^{(b_n)}(t, l) &=& \sum_{p=1}^{N_{u'}(t-a_{u'})} \Indic{\Delta^{(u',p,l)} \ne \mathbf{0}}, \qquad t\ge a_{u'},\, l\ge 1.
  \end{eqnarr*}
  We define $a_{u'} = \infty$ if either $\mathcal{Q}_{u,m}(k) = \infty$ or $\Delta^{(u,\mathcal{Q}_{u,m}(k),m)}_i = 0$. 
  In particular, $a_{u'} = \infty$ for every $u' = u(m,k,i)$ with $m>n$.
  
  We are now in a position to define the following elements:
  \begin{defn}
  	Let $n\ge 0$, then
  \[ \Zz^{(b_n)}(t) = \sum_{u\in\tree} \delta_{\Zz_u(t)}\Indic{a_u \le t}, \qquad t\ge 0, \]
	 is the \emph{branching Lévy process truncated at level $b_n$},
  and
  \[ \bar\Zz^{(b_n)}(t) = \sum_{u\in\tree} \delta_{(u,K_u^{(b_n)}(t),\Zz_u(t))}\Indic{a_u\le t}, \qquad t\ge 0, \]
  is the \emph{labelled branching Lévy process truncated at level $b_n$}.
  \end{defn}
  \begin{rem}
  	In the construction above, the role of the component $K^{(b_n)}_u$,
  	which records some information about the children of $u$,
  	is simply
  	to ensure that the process $\bar{\Zz}^{(b_n)}$ is Markov
  	(see the forthcoming \autoref{l:Zbar-bp}.)
  	Without the inclusion of this mark, 
  	if a particle $u$ branches at time $t$, it is not possible to determine
  	the labels of its children solely from $\bar{\Zz}^{(b_n)}(t)$.
  	We emphasise that the unlabelled process, $\Zz^{(b_n)}$, is always Markov
  	\cite[p.~1272]{Ber-cfrag}.
  \end{rem}  
  
  From the labelled branching L\'evy processes, let us also define
  \[ \tree_t^{(b_n)} = \{ u \in \tree : \exists k \in (\NN \cup\{0\})^\NN, z \in\RR \text{ such that } \bar\Zz^{(b_n)}(t)\{(u,k,z)\} = 1 \} , \qquad t\ge 0, \]
  which is the set of labels of particles present at time $t$.
  
  We introduce now the following function, which will be required to
  understand the un-truncated process.
  For $u\in\tree$, define
  \[ \ML(u)
    = \max\{ r \ge 1: \text{there exist } u', k, i, u'' \text{ such that }
    u = u'(r,k,i)u'' \}.
  \]
  Thus, $\ML(u)$ can be seen as the minimal value of $r$ for which a particle
  with label $u$ could appear in
  the construction of $\bar{\Zz}^{(b_r)}$,
  and indeed, if $u \in \tree_t^{(b_n)}$, then $\ML(u) \le n$.
 
  Of these processes, $\Zz^{(b_n)}$ is a branching
  L\'evy process with characteristics
  $(a,\sigma,\nu^{(b_n)})$ in the sense of \citet[Definition 1]{Ber-cfrag},
  and the others are
  our extensions.
  In particular, we have by \cite[Theorem 1]{Ber-cfrag} that
  $\lE\mbigl[ \sum_{u\in\tree_t^{(b_n)}} e^{q\Zz_u(t)}\mr] = e^{t\kappa^{(b_n)}(q)}$, for all $q\in \RR$, 
  where 
     \begin{eqnarr*}
     	\kappa^{(b_n)}(q) &\coloneqq& \frac{1}{2}\sigma^2q^2 + aq + \int_{\Pp} \mBigl[ \sum_{i\ge 1} p_i^q - 1 + (1-p_1)q \mr] \, \nu^{(b_n)}(\dd \pp)
     	\\
     	&=& \frac{1}{2}\sigma^2q^2 + aq + \int_{\Pp} \mBigl[ p_1^q-1+(1-p_1)q + \sum_{i\ge 2} p_i^q\Indic{p_i>e^{-b_n}} \mr] \, \nu(\dd \pp), \qquad q \in \RR.
     \end{eqnarr*}
      This function represents the cumulant of the truncated branching Lévy process.  
      
  \begin{figure}[t]
      \centering
      \includegraphics[width=0.8\textwidth]{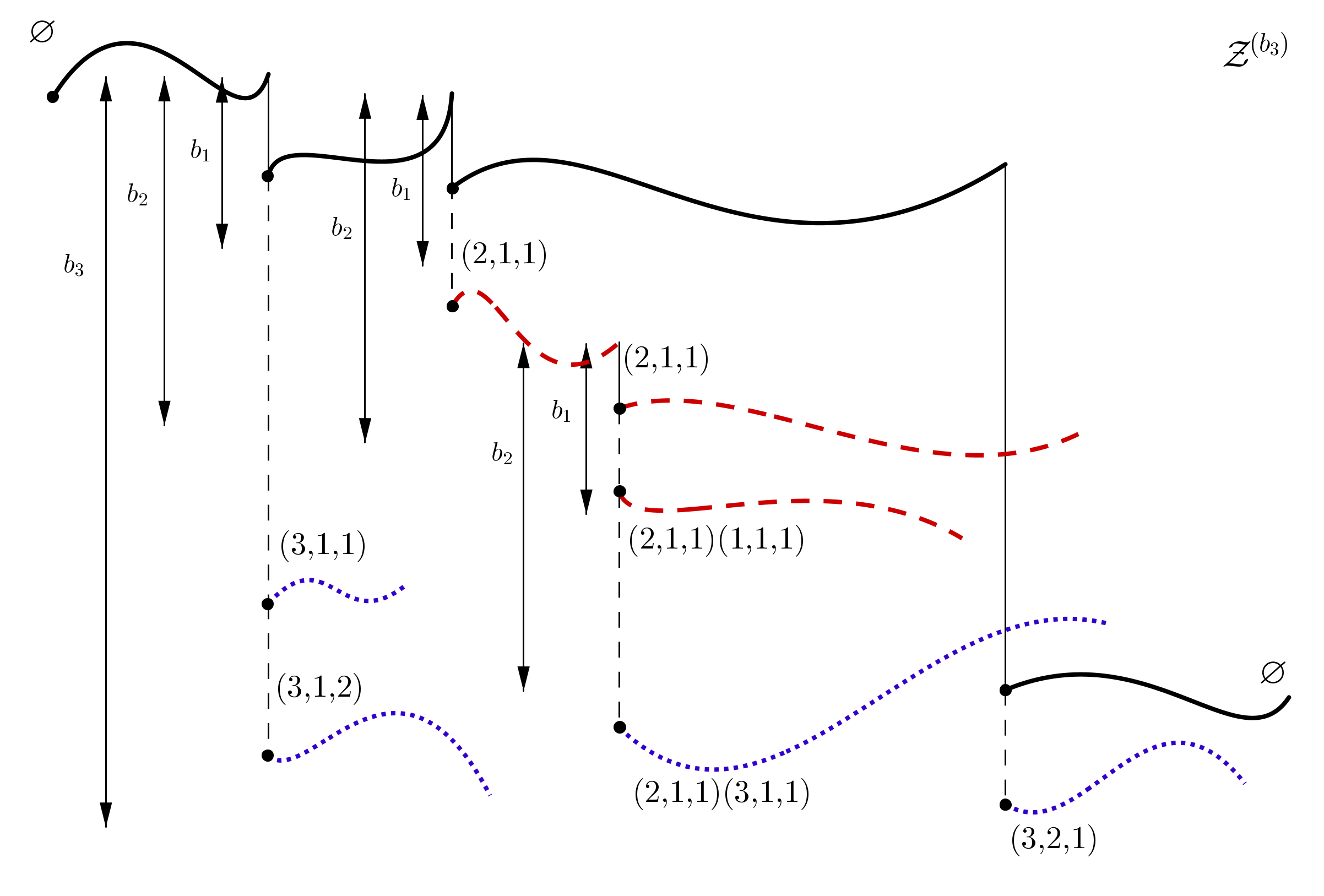}
      \caption{A sketch of the construction and labels of a (truncated)
      branching L\'evy process,
      with truncation levels marked at certain birth events.
      The path in solid black represents the process $\Zz^{(b_1)}$, which in this
      particular instance includes only the Eve particle $\varnothing$. The paths
      in dashed red represent the particles in the process
      $\Zz^{(b_2)}\setminus\Zz^{(b_1)}$; note that these are precisely the particles
      $u$ for which $\ML(u) = 2$.
      The paths in dotted blue represent the particles in the process
      $\Zz^{(b_3)}\setminus\Zz^{(b_2)}$,
      which are those particles $u$ such that $\ML(u) = 3$.
      } 
      \label{f:Z}
  \end{figure}

  \skippar
  Having defined the truncated branching Lévy process $\bar{\Zz}^{(b_n)}$,
  we introduce the idea of further truncating it at level $b_m\le b_n$. That is,
  we consider keeping, at each branching event, the child particle which is the closest to
  the parent, and suppressing the other children if and only if their distance to the position of the parent
  prior to branching is larger than or equal to $b_m$, together with their descendants. 
  Mathematically, for $m\le n$, we let

  \[ (\tree_t^{(b_n)})^{(b_m)} = \mbigl\{u\in \tree_t^{(b_n)}~:~ \ML(u) \le m\mr\} , \quad t\ge 0.\]
  We then define 
  \begin{equation}
    (\Zz^{(b_n)})^{(b_m)}(t) = \sum_{u \in(\tree_t^{(b_n)})^{(b_m)}} \delta_{\Zz_u(t)} , \quad t\ge 0,
    \label{e:bnbm}
  \end{equation}
  which is the \emph{truncation of $\Zz^{(b_n)}$ to level $b_m$}, 
  and similarly
  \begin{eqnarr*}
    (K_u^{(b_n)})^{(b_m)}(t,l) &=& K_u^{(b_n)}(t,l)\Indic{l\le m}, \quad t\ge 0,\, l\ge 1;
    \\
    (\bar\Zz^{(b_n)})^{(b_m)}(t) &=& \sum_{u\in(\tree_t^{(b_n)})^{(b_m)}}\delta_{(u,(K_u^{(b_n)})^{(b_m)}(t),\Zz_u(t))}, \quad t\ge 0.
  \end{eqnarr*}

  With this definition, we get the following lemma.
  
  \begin{lem}\relax\label{l:consistency}
    Let $m\le n$. Then $(\Zz^{(b_n)})^{(b_m)}$ is equal in law to $\Zz^{(b_m)}$
    and $(\bar\Zz^{(b_n)})^{(b_m)}$ is equal in law to $\bar\Zz^{(b_m)}$.
  \end{lem}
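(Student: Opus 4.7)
The plan is to construct the data used in the direct $b_m$-construction of $\bar\Zz^{(b_m)}$ out of the $b_n$-data $(\xi_u, N_u, \Delta^{(u,p)})_{u\in\tree}$, and then verify that the truncated process $(\bar\Zz^{(b_n)})^{(b_m)}$ coincides pathwise with the branching L\'evy process produced from this newly constructed data. The identity underpinning the whole argument is $k_{b_m}\circ k_{b_n} = k_{b_m}$ for $m\le n$, from which $\nu^{(b_m)} = \nu^{(b_n)}\circ k_{b_m}^{-1}$.

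I begin with the Eve particle. Split the marked Poisson point process $\{(T_{\varnothing,p},\Delta^{(\varnothing,p)})\}$ into \emph{kept} events, those with $k_{b_m}(\Delta^{(\varnothing,p)})\in\Pp\setminus\Pp_1$, and \emph{absorbed} events, those with $k_{b_m}(\Delta^{(\varnothing,p)})\in\Pp_1$. Poisson thinning produces two independent Poisson processes: the kept events have rate $\lambda_{b_m}$ and, once pushed through $k_{b_m}$, their marks have law $\nu^{(b_m)}\rvert_{\Pp\setminus\Pp_1}/\lambda_{b_m}$. Next, define an augmented spine motion $\tilde\xi_\varnothing \coloneqq \xi_\varnothing + \sum_{\text{absorbed } p}\log\Delta^{(\varnothing,p)}_1$, which is independent of the kept data. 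A computation of Laplace exponents, using that $k_{b_m}$ fixes the first coordinate, shows that the compound-Poisson contribution from the absorbed $p_1$'s exactly converts the drift correction and L\'evy measure of $\Psi^{(b_n)}$ into those of $\Psi^{(b_m)}$, so $\tilde\xi_\varnothing$ has Laplace exponent $\Psi^{(b_m)}$.

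Iterating this procedure over every $u\in\tree$ produces, by the original independence of the $b_n$-data, i.i.d.\ families $(\tilde\xi_u,\tilde N_u,\tilde\Delta^{(u,q)})$ with exactly the joint law required for the direct $b_m$-construction. For every surviving particle $u\in(\tree_t^{(b_n)})^{(b_m)}$, its trajectory in $(\bar\Zz^{(b_n)})^{(b_m)}$ coincides pathwise with the one it has in the branching L\'evy process built from the tilde data: only children with $\ML>m$ are removed, and the accompanying $\log p_1$ shifts are precisely those that have been absorbed into the parent's spine motion.

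The main obstacle is checking that the labelling is preserved, and not merely matched up to a bijective relabelling. The key observation is that $k_{b_m}$ acts as the identity on entries of class at most $m$, so the sub-sequences $\Delta^{(u,p,l)}$ for $l\le m$ are left unchanged on passing from the $b_n$- to the derived $b_m$-data. Consequently the counters $\mathcal{Q}_{u,l}(k)$ (interpreted in each construction's indexing of branching events) single out the same physical events, and the rank $i$ within each class-$l$ sub-sequence is the same in both. Labels of the form $u(l,k,i)$ with $l\le m$ therefore agree between the two constructions, and the marks satisfy $K_u^{(b_m)}(t,l)=K_u^{(b_n)}(t,l)$ for $l\le m$ and vanish for $l>m$, which matches $(K_u^{(b_n)})^{(b_m)}(t,l)$ by definition. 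This yields the claimed equality in law for $\bar\Zz^{(b_m)}$, from which the unlabelled version follows immediately by forgetting the marks.
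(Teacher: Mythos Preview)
Your argument is correct. The paper's own proof is much terser: for the unlabelled statement it simply cites \cite[Lemma~3]{Ber-cfrag}, and for the labelled statement it appeals to the intuitive description of the labels, noting that removing all $u$ with $\ML(u)>m$ and zeroing out the corresponding entries of $K_{u'}^{(b_n)}$ erases exactly the record of those excised births.

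What you have done is essentially unpack the cited lemma. The Poisson thinning that splits branching events into ``kept'' and ``absorbed'', together with the verification that the absorbed $\log p_1$ jumps convert $\Psi^{(b_n)}$ into $\Psi^{(b_m)}$, is precisely the mechanism behind \cite[Lemma~3]{Ber-cfrag}. Your treatment of the labels is also more explicit than the paper's: you check that the class sub-sequences $\Delta^{(u,p,l)}$ for $l\le m$ are untouched by $k_{b_m}$, so that the counters $\mathcal{Q}_{u,l}(k)$, the birth times, and the within-class ranks $i$ all agree between the two constructions (even though the ambient enumeration of branching events changes). The paper leaves this implicit in its appeal to the ``intuitive description''. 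Your version has the advantage of being self-contained; the paper's has the advantage of brevity once the external reference is granted.
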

  \begin{proof}
    The first statement is \cite[Lemma 3]{Ber-cfrag}, and the second follows by considering
    the intuitive description of the labels beginning on page~\pageref{s:intuitive}:
    if all $u$ with $\ML(u) > m$ are removed,
    then those elements do not appear in $\bar\Zz$, and the sequence $(K_{u'}^{(b_n)}(t))^{(b_m)}$
    for the remaining $u'$ simply erases the record of birth events that would have given rise to those
    erased $u$.
  \end{proof}
  
  We therefore see that both the labels and the positions of the particles are consistent
  under truncation, as are the marks $K_u^{(b_\cdot)}$. By the Kolmogorov extension theorem,
  we can construct, simultaneously on the same probability space, 
  a collection of processes $(\Zz^{(b_n)})_{n\ge 0}$ and $(\bar\Zz^{(b_n)})_{n\ge 0}$
  with the property that the equality in law of \autoref{l:consistency}
  is replaced by almost sure equality.
  
  \skippar
  Thus, we are able to define the following (un-truncated) processes:
  \begin{defn}
  	The \emph{branching Lévy process with characteristics $(a,\sigma,\nu)$} is 
  	\[ \qquad \Zz(t) = \lim_{n\ge 0} \Zz^{(b_n)}(t), \qquad t\ge 0,\]
   where here, for each $t\ge 0$, $(\Zz^{(b_n)}(t))_{n\ge 0}$ is an increasing sequence of 
   point measures (i.e., $\Zz^{(b_n)}(t) (A)\le \Zz^{(b_m)}(t)(A)$ for every $n\le m$ and any Borel set $A\in \mathcal{B}(\RR)$) and we take $\Zz(t)$ as their increasing limit.  
  \end{defn}
  	  For the (un-truncated) process $\Zz$, the set of labels of particles present up to time $t\ge 0$ is  
  	  \[ \tree_t = \bigcup_{n\ge 0} \tree_t^{(b_n)}, \qquad t\ge 0. \] 
  	  Furthermore, we define $K_u$ by using a diagonal sequence: for each $t\ge 0$ and $u\in\tree_t$, 
  	  \[
  	  K_u(t) = (K_u(t,l))_{l\ge 1}, ~\text{where}~ K_u(t,l) = K_u^{(b_l)}(t,l). 
  	  \]
  \begin{defn}
  	 		The \emph{labelled branching Lévy process with characteristics $(a,\sigma,\nu)$} is 
  	 		\[ \bar\Zz(t) = \sum_{u\in\tree_t} \delta_{(u,K_u(t),\Zz_u(t))}, \qquad t\ge 0. \]
  	\end{defn}
  
  In particular, since $\kappa^{(b_n)}(q) \upto \kappa(q)$ whenever $q\in\dom\kappa$,
  we have that \[ \lE\mBigl[ \sum_{u\in\tree_t} e^{q\Zz_u(t)}\mr] = e^{t\kappa(q)}, \qquad q\in\dom\kappa, \] which
  is an important property of the process.
  
  \begin{rem}\fakephantomsection \label{r:misc}
    \begin{enumerate}
      \item
        In \cite{BM-bLp,BBCK-maps}, growth-fragmentations
        are studied in which 
        upward jumps of the particle locations (with or without associated branching)
        are permitted, under some non-exploding assumption. This can be accommodated in our construction
        as well, simply by removing
        the restriction that the motion processes $(\xi_u)_{u\in \mathcal{U}}$ be spectrally negative
        (and, if necessary, incorporating branching at upward jumps)
        thereby giving versions of these processes with labels and genealogies.
      \item
        The processes $\Zz^{(b_n)}$ and $\bar\Zz^{(b_n)}$ are (labelled) branching
        L\'evy processes in their own right, having characteristics
        $(a,\sigma,\nu^{(b_n)})$.
      \item\label{i:misc:3}
        We wish to emphasise that, despite the technical appearance of our label
        definitions, they can be found deterministically once the unlabelled
        branching L\'evy process
        is known. In particular, if we have all $\Zz^{(b_n)}$ defined on the same
        probability space, and we are given a single sample from this space,
        then a sample of the process $\bar{\Zz}^{(b_n)}$ can be constructed, without
        extra randomness, using the intuitive
        definition of the labels on page \pageref{s:intuitive}. 
        This will be important in \autoref{s:forward}.
    \end{enumerate}
  \end{rem}
  
  \subsection{Regularity and the branching property}\label{s:branching}
  
  
  One of the key results of \cite{Ber-cfrag} was the branching property of the compensated
  fragmentation $\Zb$. This result extends naturally to $\Zz$, and we shall shortly
  give an explicit statement of it for $\bar\Zz$. However, we first elaborate
  a little on the state space of $\bar\Zz$, and consider the regularity of the process.
  
  We first expand on the space $\tree$. Some of the definitions here will not be
  needed until the next section, but we give them here for ease of reference.
  We define relations
  $\preceq$ and $\prec$ on $\tree$ to denote ancestry, 
  so $u\preceq v$ if there exists some $u' \in \tree$ such that $v = uu'$,
  and $u\prec v$ if $u\preceq v$ and $u\ne v$.
  Using this, we define ancestors and descendants
  as follows, which is a little subtle due to immortality of particles.
  If $s<t$ and $v\in\tree_t$, we define $u = \Anc(s;v)$ to be the
  largest (with respect to $\preceq$) element of $\tree_s$ such that $u\preceq v$.
  Conversely, for $u\in\tree_s$, we define $\Desc(s,u;t) = \{ v \in \tree_t : u = \Anc(s;v)\}$.
  We also define $\abs{u}$ to be the unique $n \in \NN\cup\{0\}$
  such that $u \in (\NN^3)^n$,
  that is, the generation of $u$;
  and $(u_i)_{1\le i\le n}$ to be those elements of $\NN^3$ such that $u = u_1\dotsb u_n$.
  We extend this so that $u_i = (0,0,0)$ if $\abs{u} < i$.
  Finally, we consider $\tree$ be endowed with the metric
  $\rho(u,v) = \sum_{i\ge 1} \norm{u_i - v_i}$, where here $\norm{\,\cdot\,}$ is the
  usual Euclidean norm on $\RR^3$.

  Define the space $\mathcal{L}$ to consist of those sequences $K = (K(l))_{l\ge 1}$ in the set $(\NN\cup\{0\})^{\NN}$
  for which the function $\norm{K}_{\mathcal{L}} = \sum_{l\ge 1} \exp(-l-e^{4b_l}) \abs{K(l)}$
  is finite; then $(\mathcal{L},\norm{\,\cdot\,}_{\mathcal{L}})$ is a normed vector
  space which is isomorphic to $\ell^1$,
  via the map $\phi\from \mathcal{L} \to \ell^1$, $\phi(K) = (e^{-l-e^{4b_l}}K(l) : l\ge 1)$.

  Now let $\mathbf{X} = \tree\times \mathcal{L} \times \RR$.
  This is a complete, separable metric space when given
  the product metric $d\bigl((u,K,x),(u',K',x')\bigr) = \rho(u,u')+\lVert K-K'\rVert_{\mathcal{L}} + \lvert x-x'\rvert$.
  It will prove useful to define
  $\mathcal{M}_p(\mathbf{X})$ to be the set of point measures on $\mathbf{X}$
  which are finite on bounded subsets of $\mathbf{X}$. We give this a metric as follows
  (see \cite[\S A2.6]{DV-pp1}). Let $q \in (\dom \kappa)^\circ$ be chosen arbitrarily,
  and let $x_0 = (\varnothing,\mathbf{0},0) \in \mathbf{X}$.
  If $\mu,\mu'$ are point measures on $\mathbf{X}$, let
  \[ d_q(\mu,\mu') = \int_0^\infty q e^{-qr} \frac{d^{(r)}(\mu_r,\mu_r')}{1+d^{(r)}(\mu_r,\mu_r')} \, \dd r , \]
  where
  $\mu_r = \mu\rvert_{B_r(x_0)}$ is the measure $\mu$ restricted to
  the open ball $B_r(x_0)$ of radius $r \ge 0$ around $x_0$, and
  $d^{(r)}$ is the L\'evy--Prokhorov metric
  on  $B_r(x_0)$; this is defined as:
  \begin{multline*} d^{(r)}(\mu,\mu') = \inf\{ \epsilon \ge 0 : \text{for all } F \subset B_r(x_0) \text{ closed, } \\
  \mu_r(F) \le \mu_r'(F^{\epsilon})+\epsilon \text{ and } \mu_r'(F) \le \mu_r(F^{\epsilon})+\epsilon \}, \end{multline*}
  where $F^\epsilon \coloneqq \{x \in B_r(x_0) : \text{there exists } y\in F \text{ such that } d(x,y) < \epsilon\}$.

  For a labelled branching L\'evy process $\bar\Zz(t) = \sum_{u\in\tree_t} \delta_{(u,K_u(t),\Zz_u(t))}$, one may show that for any $u\in \tree$ and $t\ge 0$, $\norm{K_u(t)}_{\mathcal{L}}<\infty$
  almost surely. 
  Therefore, we may regard $\bar\Zz$ as taking values in
  the complete separable metric space $\mathcal{M}_p(\mathbf{X})$
  with metric $d_q$. Furthermore,
  we have the following pair of results:
  \begin{lem}\label{l:cv-Zbn}
    For $q \in (\dom\kappa)^\circ$ and $t\ge 0$, $\sup_{s\le t} d_q( \bar\Zz(s),\bar\Zz^{(b_n)}(s)) \to 0$ in probability
    as $n\to\infty$.
    \begin{proof}
      Fix $q \in (\dom \kappa)^\circ$ and $t\ge 0$.
      To begin with,
      \begin{multline}\label{e:cadlag-cv-inter}
        d_q(\bar\Zz(s),\bar\Zz^{(b_n)}(s))
        \le
        d_q\mBigl( \sum_{u \in \tree_s} \delta_{(u,K_u(s),\Zz_u(s))},
        \sum_{u\in\tree_s^{(b_n)}} \delta_{(u,K_u(s),\Zz_u(s))} \mr)
        \\
        {} +
        d_q\mBigl( \sum_{u \in \tree_s^{(b_n)}} \delta_{(u,K_u(s),\Zz_u(s))},
        \sum_{u\in\tree_s^{(b_n)}} \delta_{(u,K_u^{(b_n)}(s),\Zz_u(s))} \mr).
      \end{multline}
      We study the two terms on the right-hand side separately.
      
      We first look at the second term.
      Take $\epsilon = \max\{ \norm{K_u^{(b_n)}(s) - K_u(s)}_{\mathcal{L}} : u \in \tree_s^{(b_n)} \}$; then
      the distance
      between any pair of points
      $(u,K_u(s),\Zz_u(s))$ and $(u,K_u^{(b_n)}(s),\Zz(s))$,
      with $u \in \tree_s^{(b_n)}$,
      is at most $\epsilon$, and so for any closed $F \subset \mathbf{X}$,
      $(u,K_u(s),\Zz_u(s)) \in F$ implies $(u,K_u^{(b_n)}(s),\Zz(s)) \in F^{\epsilon}$,
      and vice versa.
      By the definition of the L\'evy--Prokhorov
      metric,
      we see that for every $r \ge 0$,
      \[
        d^{(r)}\mBigl( \sum_{u \in \tree_s^{(b_n)}} \delta_{(u,K_u(s),\Zz_u(s))},
                \sum_{u\in\tree_s^{(b_n)}} \delta_{(u,K_u^{(b_n)}(s),\Zz_u(s))} \mr)
        \le \max\{ \norm{K_u^{(b_n)}(s) - K_u(s)}_{\mathcal{L}} : u \in \tree_s^{(b_n)} \}.
      \]
      Note that $K_u^{(b_n)}(s,l) = K_u(s,l)$ for all $l\le n$, we have 
      \[
      \norm{K_u^{(b_n)}(s) - K_u(s)}_{\mathcal{L}}
      = \sum_{l\ge n+1} \exp(-(l+e^{4b_l})) K_u(s,l). 
      \]
      The crude estimate $\sup_{s\le t} K_u(s,l) = K_u(t,l) \le \card\Zz^{(b_l)}(t)$,
      the latter being the number of particles in $\Zz^{(b_l)}(t)$, then yields      
      	        \begin{eqnarr*}
      	        	\lE\mbigl[\sup_{s\le t}\max\{ \norm{K_u^{(b_n)}(s) - K_u(s)}_{\mathcal{L}} : u \in \tree_s^{(b_n)} \}\mr]
      	        	&\le& \sum_{l\ge n+1} \exp(-(l+e^{4b_l})) \lE[\card \Zz^{(b_l)}(t)]
      	        	\\
      	        	&=& \sum_{l\ge n+1} \exp\mbigl(\kappa^{(b_l)}(0) t -l-e^{4b_l}\mr) .
      	        \end{eqnarr*}   
      	        Noticing that
      	        \[e^{-2 b_l}\kappa^{(b_l)}(0) = \int_{\Pp} \sum_{i\ge 2}e^{-2 b_l} \Indic{p_i> e^{-b_l}}\, \nu (\dd \pp) \le \int_{\Pp} \sum_{i\ge 2}p_i^2\,  \nu (\dd \pp) =: C<\infty, \] 
      	        we have
      	        \[\sum_{l\ge n+1} \exp\mbigl(t\kappa^{(b_l)}(0) -l-e^{4b_l}\mr)  \le \sum_{l\ge n+1} e^{-l} \exp\mbigl(e^{2b_l}(Ct - e^{2b_l})\mr).	\]	
      For fixed $t\ge 0$,
      the right-hand side tends to zero as $n\to\infty$.
      This ensures that the second term of \eqref{e:cadlag-cv-inter} converges to
      zero in probability.

      Turning to the first term in \eqref{e:cadlag-cv-inter}, we have
      \[ 
      d^{(r)}\mBigl( \sum_{u \in \tree_s} \delta_{(u,K_u(s),\Zz_u(s))},
              \sum_{u\in\tree_s^{(b_n)}} \delta_{(u,K_u(s),\Zz_u(s))} \mr)
      \le \sum_{u \in \tree_s\setminus \tree_s^{(b_n)}} \Indic{\Zz_u(s) \in (-r,r)} . \]
      We now integrate in order to study the $d_q$-distance, and use the
      bound $\Indic{\Zz_u(s) \in (-r,r)} \le e^{q'(r+\Zz_u(s))}$, where
      $q' \in \dom \kappa$ is chosen arbitrarily such that $q' < q$ holds:
      \begin{eqnarr*}
        d_q\mBigl( \sum_{u \in \tree_s} \delta_{(u,K_u(s),\Zz_u(s))},
        \sum_{u\in\tree_s^{(b_n)}} \delta_{(u,K_u(s),\Zz_u(s))} \mr)
        &\le& \int_0^\infty qe^{-qr}
        \sum_{u \in \tree_s\setminus \tree_s^{(b_n)}} \Indic{\Zz_u(s) \in (-r,r)} \, \dd r
        \\
        &\le& \int_0^\infty qe^{(q'-q)r}
        \sum_{u \in \tree_s\setminus \tree_s^{(b_n)}} e^{q' \Zz_u(s)} \, \dd r \\
        &=& \frac{q}{q-q'} \mBigl(
        \sum_{u \in \tree_s} e^{q' \Zz_u(s)} 
        - \sum_{u \in \tree_s^{(b_n)}} e^{q' \Zz_u(s)} 
        \mr).\IEEEyesnumber \label{e:trick}
      \end{eqnarr*}

      Now, the proof is completed using Doob's maximal inequality exactly
      as in \cite[Proof of Lemma 4]{Ber-cfrag}.
    \end{proof}
  \end{lem}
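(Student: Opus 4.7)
My plan is to bound $d_q(\bar\Zz(s),\bar\Zz^{(b_n)}(s))$ by inserting the intermediate measure $\sum_{u\in\tree_s^{(b_n)}}\delta_{(u,K_u(s),\Zz_u(s))}$ and applying the triangle inequality. This gives two terms: one that keeps the same particle set but switches the marks from $K_u$ to $K_u^{(b_n)}$, and one that only changes which particles are included. I would then show each term tends to $0$ in expectation (uniformly in $s\le t$), which suffices for convergence in probability.

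For the mark-difference term, I would use that $K_u^{(b_n)}(s,l)=K_u(s,l)$ whenever $l\le n$ by the consistency built into the labels, so $\norm{K_u^{(b_n)}(s)-K_u(s)}_{\mathcal{L}}\le\sum_{l>n}e^{-l-e^{4b_l}}K_u(s,l)$. Bounding the L\'evy--Prokhorov term crudely by the maximal mark over all particles in $\tree_s^{(b_n)}$, and using the crude pointwise estimate $K_u(s,l)\le\card\Zz^{(b_l)}(s)$ together with the many-to-one identity $\lE[\card\Zz^{(b_l)}(t)]=e^{t\kappa^{(b_l)}(0)}$, the task reduces to showing $\sum_{l>n}\exp(t\kappa^{(b_l)}(0)-l-e^{4b_l})\to 0$. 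The key observation is $\kappa^{(b_l)}(0)\le C e^{2b_l}$ (coming from $\sum_{i\ge2}\Indic{p_i>e^{-b_l}}\le e^{2b_l}\sum_{i\ge 2}p_i^2$ and \eqref{e:nu}), so the term $e^{4b_l}$ inside the exponential crushes the at-most-$e^{Cte^{2b_l}}$ growth, giving a summable tail.

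For the particle-set term, since $\tree_s^{(b_n)}\subseteq\tree_s$ and the two measures agree on common particles, the L\'evy--Prokhorov distance at radius $r$ is bounded by the total mass that the larger measure puts on the ball of radius $r$ around $x_0$ minus the smaller, i.e.\ by $\sum_{u\in\tree_s\setminus\tree_s^{(b_n)}}\Indic{\Zz_u(s)\in(-r,r)}$. Choosing $q'\in\dom\kappa$ with $q'<q$ and using $\Indic{\Zz_u(s)\in(-r,r)}\le e^{q'(r+\Zz_u(s))}$ lets me integrate out $r$ to obtain
\[
d_q\mbigl(\txstyl{\sum}_{u\in\tree_s}\delta_{(u,K_u(s),\Zz_u(s))},\txstyl{\sum}_{u\in\tree_s^{(b_n)}}\delta_{(u,K_u(s),\Zz_u(s))}\mr)
\le \frac{q}{q-q'}\mBigl(\sum_{u\in\tree_s}e^{q'\Zz_u(s)}-\sum_{u\in\tree_s^{(b_n)}}e^{q'\Zz_u(s)}\mr).
\]

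The final and most delicate step is upgrading this to uniform control in $s\le t$. Both $e^{-s\kappa(q')}\sum_{u\in\tree_s}e^{q'\Zz_u(s)}$ and $e^{-s\kappa^{(b_n)}(q')}\sum_{u\in\tree_s^{(b_n)}}e^{q'\Zz_u(s)}$ are nonnegative martingales, so I would apply Doob's $L^2$ maximal inequality to the difference (as done in \cite[Proof of Lemma~4]{Ber-cfrag}), use $\kappa^{(b_n)}(q')\upto\kappa(q')$ and the $L^2$-convergence of the additive martingales, to conclude that the supremum over $s\le t$ of the right-hand side tends to zero in $L^2$, hence in probability. The main obstacle here is verifying the $L^2$ bounds on these additive martingales at $q'$; this should be feasible because $q'$ is in the interior of $\dom\kappa$, so some slightly larger $q''\in\dom\kappa$ is available and yields the required $L^2$ control via the many-to-one formula at $2q'$ (or via a straightforward second-moment computation of the branching L\'evy structure).
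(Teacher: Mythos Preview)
Your proposal is correct and follows essentially the same route as the paper's proof: the same triangle-inequality split, the same crude bound $K_u(s,l)\le\card\Zz^{(b_l)}(t)$ combined with $\kappa^{(b_l)}(0)\le Ce^{2b_l}$ for the mark term, and the same exponential-indicator trick with $q'<q$ for the particle-set term, finishing via Doob's inequality as in \cite[Proof of Lemma~4]{Ber-cfrag}. One small remark: in that final step the argument actually uses Doob's $L^1$ maximal inequality applied to the nonnegative submartingale difference (whose expectation at time $t$ is $e^{t\kappa(q')}-e^{t\kappa^{(b_n)}(q')}\to 0$), so you do not need $2q'\in\dom\kappa$ or any $L^2$ control.
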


  \begin{cor}[regularity of $\bar\Zz$]\label{c:cadlag}
    The process $\bar\Zz$ possesses a c\`adl\`ag version in $\mathcal{M}_p(\mathbf{X})$.
    \begin{proof}
      This follows from the above lemma exactly as in \cite[Proposition 2]{Ber-cfrag}.
    \end{proof}
  \end{cor}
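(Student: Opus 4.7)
The plan is to upgrade the convergence in probability from \autoref{l:cv-Zbn} to almost sure uniform convergence along a subsequence, and then use the fact that a uniform limit of càdlàg paths in a complete metric space is itself càdlàg.

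First, I would observe that for each fixed $n$, the truncated process $\bar\Zz^{(b_n)}$ is manifestly càdlàg as a $\mathcal{M}_p(\mathbf{X})$-valued process. Indeed, the rate $\lambda_{b_n} = \nu^{(b_n)}(\Pp\setminus\Pp_1)$ is finite, so in any compact time interval only finitely many branching events occur among the (countably many) particles alive by time $t$; more precisely, the construction on page~\pageref{s:intuitive} gives each $\bar\Zz^{(b_n)}$ pathwise as a finite superposition (locally in time) of Lévy paths with jumps grafted in at branching events, and Lévy paths are càdlàg by definition. Combined with the isomorphism of $\mathcal{L}$ with $\ell^1$, this makes $t\mapsto \bar\Zz^{(b_n)}(t)$ a càdlàg function into $(\mathcal{M}_p(\mathbf{X}),d_q)$.

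Next, fix $q \in (\dom\kappa)^\circ$ and $t\ge 0$. By \autoref{l:cv-Zbn}, $\sup_{s\le t} d_q(\bar\Zz(s),\bar\Zz^{(b_n)}(s)) \to 0$ in probability, so I can extract a subsequence $(n_k)$ along which this supremum converges to $0$ almost surely. On the event of full probability where this holds, the sequence $(\bar\Zz^{(b_{n_k})}(\cdot))_{k\ge 1}$ is Cauchy for the uniform metric on $[0,t]$ induced by $d_q$. Since $(\mathcal{M}_p(\mathbf{X}),d_q)$ is a complete separable metric space (see \cite[\S A2.6]{DV-pp1}), the space of càdlàg functions $[0,t]\to \mathcal{M}_p(\mathbf{X})$ endowed with the uniform metric is itself complete, and uniform limits of càdlàg functions are càdlàg. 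Hence on this event the process $s\mapsto \bar\Zz(s)$ agrees on $[0,t]$ with a càdlàg path.

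Finally, running this argument successively on $[0,1], [0,2], \dots$ and taking a diagonal subsequence produces a single almost sure event on which $s\mapsto \bar\Zz(s)$ is càdlàg on every compact interval; the resulting path is then a càdlàg modification of $\bar\Zz$ on $[0,\infty)$, since convergence in probability at each fixed $s$ identifies the limit with $\bar\Zz(s)$ almost surely. The main (minor) obstacle is checking completeness of the uniform metric on càdlàg paths valued in $(\mathcal{M}_p(\mathbf{X}),d_q)$, but this is a standard consequence of the completeness of $d_q$ itself, and the remainder of the argument is exactly as in \cite[Proposition 2]{Ber-cfrag}.
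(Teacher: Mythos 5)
Your proposal reproduces exactly the argument of \cite[Proposition 2]{Ber-cfrag} to which the paper defers: each truncated process is càdlàg (finitely many branching events per unit time under truncation), \autoref{l:cv-Zbn} gives uniform-in-time convergence in probability on compacts, a subsequence converges almost surely, and uniform limits of càdlàg paths into the complete metric space $(\mathcal{M}_p(\mathbf{X}),d_q)$ are càdlàg. This is the intended proof and is correct.
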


  Thanks to this result, we can consider $\lP$ to be defined on the space 
  $\Omega = D([0,\infty),\mathcal{M}_p(\mathbf{X}))$ of
  càdlàg functions from $[0,\infty)$ to $\mathcal{M}_p(\mathbf{X})$,
  endowed with the Skorokhod topology; we refer the reader to
  \cite{Bil-conv} for more details on this space.

  The process $\bar\Zz$ has the Markov property, which in this context is usually called 
  the \emph{branching property} and which we now explain.
  We first define translation operators for $u \in \tree$ and $t\ge 0$, as follows.
  Let $\theta_{u,t}\from\Omega \to \Omega$ be such that,
  if
  \[ \bar\Zz(s+t) = \sum_{uu'\in\tree_{s+t}} \delta_{(uu', K_{uu'}(s+t),\Zz_{uu'}(s+t))}
    + \sum_{\substack{u''\in\tree_{s+t}, \\u \not\preceq u''}} \delta_{(u'',K_{u''}(s+t),\Zz_{u''}(s+t))}, \qquad s \ge 0 , \]
  then
  \[ \bar\Zz(s)\circ\theta_{u,t} = \sum_{uu'\in \tree_{s+t}} \delta_{(u',K_{uu'}(s+t)-K_{uu'}(t),\Zz_{uu'}(s+t)-\Zz_{uu'}(t))}, \qquad s \ge 0. \]
    That is, $\theta_{u,t}$ shifts the particle process such that one only observes the particle with label $u$
    and its descendants which are born strictly after $t$; and the particle represented by $u$ is shifted to start
    at the origin, at time $0$, with label $\varnothing$ and no recollection of its genealogical history.
    
  Let $(\FF_t)_{t\ge 0}$ be the natural filtration of
  $\bar\Zz$, namely $\FF_t = \sigma(\bar\Zz(s), s\le t)$,
  and define $\FF_\infty = \sigma\bigl(\cup_{t\ge 0} \FF_t\bigr)$.     
  We then have the following simple result.
  \begin{lem}[branching property]\relax\label{l:Zbar-bp}
   For each $u \in \tree$, let $F_u$ be a bounded, measurable functional. Then
  \[
    \lE\mBigl[ \prod_{u\in \tree_t} F_u( \bar\Zz(s) \circ \theta_{u,t}, s \ge 0) \mm\vert \FF_t \mr]
    = \prod_{u \in \tree_t} \lE[ F_u(\bar \Zz(s), s \ge 0) ] .
  \]
  \end{lem}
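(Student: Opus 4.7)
The plan is to first establish the branching property for the truncated processes $\bar\Zz^{(b_n)}$ from the explicit construction in \autoref{s:construct-truncate}, and then extend to $\bar\Zz$ using consistency (\autoref{l:consistency}) and the union representation $\tree_t = \bigcup_n \tree_t^{(b_n)}$.

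For the truncated case, fix $n\ge 0$ and $t\ge 0$. Since $\lambda_{b_n} < \infty$, the set $\tree_t^{(b_n)}$ is almost surely finite. For each $u \in \tree_t^{(b_n)}$, I would partition the underlying randomness of the construction into $\FF_t$-measurable ``pre-$t$'' pieces and ``post-$t$'' pieces. The post-$t$ pieces for a given $u$ consist of the L\'evy process increments $(\xi_u(s + (t - a_u)) - \xi_u(t - a_u))_{s \ge 0}$, the shifted Poisson process $(N_u(s + (t - a_u)) - N_u(t - a_u))_{s \ge 0}$ together with the mass-partitions $\Delta^{(u,p)}$ for $p > N_u(t - a_u)$, and the entire untouched collections $(\xi_{uv}, N_{uv}, (\Delta^{(uv, p)})_{p\ge 1})$ for every $v \in \tree \setminus\{\varnothing\}$ with $a_{uv} > t$. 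Independent increments of L\'evy processes, the memoryless property of the Poisson processes, the i.i.d.\ structure across distinct labels, and the mutual independence of the three families then imply that these post-$t$ pieces are independent of $\FF_t$, mutually independent across $u \in \tree_t^{(b_n)}$, and each distributed like the full root construction of $\bar\Zz^{(b_n)}$. The branching property for $\bar\Zz^{(b_n)}$ follows immediately.

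To pass to the un-truncated $\bar\Zz$, I would exploit the almost-sure coupling from the Kolmogorov extension, under which $\tree_t = \bigcup_n \tree_t^{(b_n)}$ and $K_u(t, l) = K_u^{(b_l)}(t, l)$, together with the identity $(\bar\Zz(s)\circ \theta_{u,t})^{(b_n)} = \bar\Zz^{(b_n)}(s) \circ \theta_{u,t}$ whenever $u \in \tree_t^{(b_n)}$, which is immediate from the definitions of truncation and translation. For functionals $F_u$ depending only on the truncation to level $b_n$, equivalently supported on labels in $\{u : \ML(u) \le n\}$ so that only finitely many factors in the product are non-trivial, the statement follows directly from the truncated version. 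A standard monotone class argument together with the convergence in \autoref{l:cv-Zbn} then extends it to general bounded measurable $F_u$.

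The main obstacle, I expect, is the bookkeeping in the first step: identifying precisely which parts of the underlying i.i.d.\ families of L\'evy processes, Poisson processes and mass-partitions belong to $\FF_t$ and which lie in its complement, given that the genealogy $\tree_t^{(b_n)}$ is itself random and that each particle's trajectory interleaves its own motion with branching events straddling $t$. A secondary, cosmetic difficulty is making sense of the product on the left-hand side when $\tree_t$ is countably infinite; this is handled by assuming boundedness of the $F_u$ (so the product is well-defined in a suitable sense) and approximating by functionals supported on finitely many labels.
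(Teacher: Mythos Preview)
Your approach is correct and would work, but it is considerably more elaborate than what the paper does. The paper's proof is a one-liner: it cites the branching property already established for the \emph{unlabelled} process $\Zz$ in \cite[p.~1272]{Ber-cfrag}, and then observes that the labels and marks $K_u$ are deterministic functions of the unlabelled process (this is precisely the content of \autorefref{r:misc}{i:misc:3}). Once one knows that $\bar\Zz$ is a measurable function of $\Zz$, the branching property for $\bar\Zz$ is inherited immediately from that of $\Zz$, with no need to redo the truncation-and-limit argument or the bookkeeping you outline.

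Your route---proving the truncated case from the explicit i.i.d.\ construction and then passing to the limit via \autoref{l:consistency} and \autoref{l:cv-Zbn}---is essentially how Bertoin proves the branching property for $\Zz$ in the first place, so you are in effect re-deriving the cited result rather than using it. This is not wrong, and it has the virtue of being self-contained, but it duplicates work. The paper's shortcut buys brevity at the cost of relying on an external reference; your version buys independence from that reference at the cost of the bookkeeping you correctly identify as the main obstacle.
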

  \begin{proof}
    This follows directly from the branching property of $\Zz$ in \cite[p.~1272]{Ber-cfrag}
    and the construction of the labels.
  \end{proof}

  We remark that, as a consequence of \autoref{c:cadlag}, the constant time $t$
  in the above lemma may be replaced by any $(\FF_t)$-stopping time,
  or indeed by a stopping line in the sense of \cite[\S 4]{BHK-FKPP}.

  \section{Change of measure and backward selection of the spine}
  \label{s:backward}

  For $\omega \in \dom \kappa$, we define the \emph{exponential additive martingale} $\mg{\omega}{\cdot}$
  just as we did in the introduction:
  \begin{equation*}
    \mg{\omega}{t} = e^{-t\kappa(\omega)} \sum_{u\in \tree_t} e^{\omega \Zz_u(t)} , \wh t \ge 0.
  \end{equation*}
  It has been proved in \cite[Corollary 3]{Ber-cfrag} that this is a 
  martingale with unit mean.
  As such, we may make a martingale change of measure, as follows. We define a measure
  $\lQ$ on $\FF_\infty$ by setting, for $A\in \FF_t$,
  \begin{equation}\label{e:lQ-W} \lQ(A) = \lE[ \Ind_A W(\omega,t) ] . \end{equation}
  The martingale property of $W(\omega,\cdot)$ ensures
  that this change of measure is consistent across different choices of $t$, and
  also implies that the process $\bar\Zz$ under $\lQ$ remains a Markov process.
  $\lQ$ is often referred to as an `exponential tilting' of the probability measure $\lP$.
  
  \medskip\noindent  
  Under this tilted measure, we isolate a single particle as the `spine'.
  We first expand the basic probability space $\Omega$ to
  produce $\hat\Omega = \Omega\times\tree^{[0,\infty)}$,
  where $\tree^{[0,\infty)}$ is the set of functions
  from $[0,\infty)$ to $\tree$.
  Introduce for each $t\ge 0$ a random variable $U_t$ such that,
  for $A\subset\Omega$ measurable,
  $A\times\{U_t = u\} = A \times \{ g \in \tree^{[0,\infty)} : g(t) = u\}$.
  Let $\hat\FF_t = \sigma(\FF_t; U_s, s\le t)$ and $\hat\FF_\infty = \sigma\bigl(\cup_{t\ge 0} \hat\FF_t\bigr)$.
  
  We may then extend the definition of $\lQ$ to sets in $\hat\FF_\infty$.
  For $A \in \FF_t$ and $u\in \tree$, let
  \begin{equation}\label{e:lQ}
    \lQ(A; U_t=u) = e^{-t\kappa(\omega)} \lE[ \Ind_A e^{\omega \Zz_u(t)}].
  \end{equation}
  It is well-known 
  (see, for instance, \cite[Theorem 4.2]{HH-spine})
  that events $\hat A \in \hat \FF_t$ may be written as
  $\hat A = \bigcup_{u\in\tree} (A_u \cap \{ U_t = u \})$, with $A_u \in \FF_t$, and so \eqref{e:lQ}
  is equivalent to defining
  \begin{equation}
    \lQ(\hat A) = e^{-t\kappa(\omega)} \lE \mBigl[\sum_{u\in \tree_t} \Ind_{A_u} e^{\omega \Zz_u(t)} \mr] , \qquad \hat A \in \hat{\FF}_t.
    \label{e:lQ'}
  \end{equation}
  The measure $\lQ$ is well-defined,
  in that, if $\hat{A}\in\FF_t$, then the right-hand side of \eqref{e:lQ'}
  reduces simply to \eqref{e:lQ-W}.
  However, in terms of the definition on $\hat{\FF}_\infty$,
  $\lQ$ distinguishes the label $U_t$
  at time $t$, and we call this the \emph{spine label}.
  
  For each fixed $t\ge 0$, if we define $U_t$ via \eqref{e:lQ},
  we can project it backward by setting $U_s = \Anc(s;U_t)$
  for $s\le t$.
  Due to the branching property of $\Zz$, this is
  consistent with evaluating $\lQ$ on $\hat\FF_s$,
  as is made precise in the following lemma.
  \begin{lem}[consistency of $\lQ$]\relax\label{l:lQ-consistent}
    Let $s< t$ and $u \in \tree$. Let $\lQ^t$ indicate the measure $\lQ$
    defined on $\hat\FF_t$ by means of \eqref{e:lQ} and back-projection of $U_t$, and $\lQ^s$ similarly
    for $\lQ$ defined on $\hat\FF_s$.
    If $A \in \FF_s$, then
    \[ \lQ^s(A; U_s = u) = \lQ^t(A; U_s = u) . \]
  \end{lem}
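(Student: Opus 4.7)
The plan is to unwind both definitions, apply the branching property of $\bar\Zz$ (\autoref{l:Zbar-bp}) to the descendants of $u$, and then absorb the resulting time-$(t-s)$ factor using the fact that $W(\omega,\cdot)$ is a unit-mean martingale.

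First, observe that under $\lQ^s$, by \eqref{e:lQ},
\[ \lQ^s(A; U_s = u) = e^{-s\kappa(\omega)}\lE[\Ind_A e^{\omega\Zz_u(s)}\Indic{u\in\tree_s}] . \]
Meanwhile, for $\lQ^t$, the event $\{U_s = u\}$ is, by back-projection of $U_t$, the same as $\{U_t \in \Desc(s,u;t)\}$, so
\[ \lQ^t(A; U_s = u) = e^{-t\kappa(\omega)}\lE\mBigl[\Ind_A \sum_{v\in\Desc(s,u;t)} e^{\omega \Zz_v(t)}\mr] .\]
The target equality thus reduces to showing
\[ \lE\mBigl[\Ind_A \sum_{v\in\Desc(s,u;t)} e^{\omega\Zz_v(t)}\mr] = e^{(t-s)\kappa(\omega)}\lE[\Ind_A e^{\omega\Zz_u(s)}\Indic{u\in\tree_s}] . \]

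The second step is to condition on $\FF_s$ and restrict to the event $\{u\in\tree_s\}$. By the definitions on page \pageref{s:intuitive} and the translation operator $\theta_{u,s}$, the descendants of $u$ at time $t$ satisfy $\Zz_v(t) = \Zz_u(s) + \Zz_{v'}(t-s)\circ\theta_{u,s}$, where $v = uv'$ and $v'$ ranges over $\tree_{t-s}$ of the shifted process. Pulling out the factor $e^{\omega\Zz_u(s)}$, which is $\FF_s$-measurable, and applying the branching property of \autoref{l:Zbar-bp} to the bounded functional $\sum_{v'} e^{\omega\,\cdot\,}$ (if necessary, truncating and using monotone convergence to handle boundedness), we obtain
\[ \lE\mBigl[\sum_{v\in\Desc(s,u;t)} e^{\omega\Zz_v(t)} \mm\vert \FF_s\mr] = e^{\omega\Zz_u(s)}\Indic{u\in\tree_s}\,\lE\mBigl[\sum_{v'\in\tree_{t-s}} e^{\omega\Zz_{v'}(t-s)}\mr] . \]

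The third step is to recognise the right-hand expectation as $e^{(t-s)\kappa(\omega)}$, which is precisely the identity $\lE[W(\omega,t-s)] = 1$ recalled from \cite[Corollary 3]{Ber-cfrag}. Substituting back and using $A\in\FF_s$ gives the required identity, completing the proof.

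The only delicate point is invoking \autoref{l:Zbar-bp} on the unbounded functional $F(\bar\Zz) = \sum_{v'\in\tree_{t-s}} e^{\omega\Zz_{v'}(t-s)}$; this can be handled in the standard way by first applying the branching property to the bounded truncations $F\wedge M$ and letting $M\to\infty$ via monotone convergence, noting that the unconditional expectation is finite since $\omega\in\dom\kappa$.
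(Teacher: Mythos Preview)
Your proof is correct and follows essentially the same route as the paper's: both unwind the definitions of $\lQ^s$ and $\lQ^t$, then use the branching property to compute $\lE\bigl[\sum_{v\in\Desc(s,u;t)} e^{\omega\Zz_v(t)} \bigm| \FF_s\bigr] = e^{(t-s)\kappa(\omega)} e^{\omega\Zz_u(s)}\Indic{u\in\tree_s}$. Your version is slightly more careful in making the indicator $\Indic{u\in\tree_s}$ explicit and in flagging the monotone-convergence step for the unbounded functional, which the paper leaves implicit.
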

  \begin{proof}
    Firstly, we have
    \[ 
      e^{-t\kappa(\omega)}\lE\mBigl[ \sum_{v \in \Desc(s,u;t)} e^{\omega \Zz_v(t)} \mgiven \FF_s \mr]
      = e^{-s\kappa(\omega)} e^{\omega\Zz_u(s)},
    \]
    due to the branching property. Then, 
    \begin{eqnarr*}
      \lQ^s( A; U_s=u)
      &=& e^{-s\kappa(\omega)}\lE[ \Ind_A e^{\omega\Zz_u(s)}] 
      \\
      &=& e^{-t\kappa(\omega)}
      \lE\mbiggl\{ 
      \Ind_A 
      \lE\mBigl[ \sum_{v \in \Desc(s,u;t)} e^{\omega\Zz_v(t)} \mgiven \FF_s \mr] \mr\} \\
      &=& e^{-t\kappa(\omega)}
      \lE\mBigl[ \Ind_A \sum_{v \in \Desc(s,u;t)} e^{\omega\Zz_v(t)} \mr]
      \\
      &=& \mathbb{E}_{\lQ^t} \mBigl[\Ind_A  \sum_{v \in \Desc(s,u;t)} \Indic{U_t=v} \mr]
      = \lQ^t( A; U_s=u). \qedhere
    \end{eqnarr*}
  \end{proof}
  
  \skippar
  We refer to the process $(\bar\Zz,U) = ( (\bar\Zz(t),U_t), t\ge 0)$ as the
  \emph{branching L\'evy process with spine}.
  In order for it to be useful, it is important that
  $(\bar\Zz,U)$ should retain the branching property. 
  For the sake of clarity, we keep the time-annotation
  $\lQ^t$ which was introduced in the last lemma.
  \begin{lem}[branching property of $(\bar \Zz,U)$]\relax\label{l:Zbar-U-bp}
    Fix $t\ge s\ge 0$. Let $F_v$ be an $\FF_{t-s}$-measurable functional for each $v\in \tree$, and
    let $G$ be $\sigma(U_{t-s})$-measurable. Then,
    \[ \lQ^t \mbiggl[ (G\circ \theta_{U_s,s}) \cdot \prod_{v\in\tree_s} (F_v\circ\theta_{v,s}) \mm\vert \hat\FF_s \mr]
     = \mbigl. \mBigl( \prod_{\substack{v\in \tree_s \\ v\ne U_s}} \lP[F_v] \mr)
      \cdot \lQ^{t-s}[G\cdot F_u]\mr\rvert_{u=U_s} .
    \]
  \end{lem}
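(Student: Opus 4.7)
My plan is to verify the identity by testing against indicators $\Ind_A$ of sets $A\in\hat\FF_s$ and unwinding the definition \eqref{e:lQ'} of $\lQ^t$, so that the branching property of $\bar\Zz$ under $\lP$ (\autoref{l:Zbar-bp}) can be applied. Any $A\in\hat\FF_s$ may be written as $A = \bigcup_{u\in\tree}(A_u\cap\{U_s=u\})$ with $A_u\in\FF_s$, so it suffices to compute the left-hand side against such an $\Ind_A$.

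First I would apply \eqref{e:lQ'} and decompose the sum over $U_t=u'\in\tree_t$ according to its ancestor $u = \Anc(s;u')\in\tree_s$. Since $U_s = \Anc(s;U_t)$ and $\theta_{s,U_s}=\theta_{s,u}$ on the event $\{U_t = u'\}\cap\{\Anc(s;u')=u\}$, the left-hand side becomes
\[ e^{-t\kappa(\omega)}\lE\left[\sum_{u\in\tree_s}\Ind_{A_u}\, G\circ\theta_{s,u} \sum_{u'\in\Desc(s,u;t)} e^{\omega\Zz_{u'}(t)} \prod_{v\in\tree_s}F_v\circ\theta_{s,v}\right]. \]

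Next comes a change of variables along the line of $u$. Descendants of $u$ in the original process are in bijection with particles of the shifted process $\bar\Zz\circ\theta_{s,u}$, so that if $u'\in\Desc(s,u;t)$ corresponds to $w\in\tree_{t-s}$ in the shifted frame, then $\Zz_{u'}(t)-\Zz_u(s)$ equals the position of $w$ at time $t-s$ in the shifted process; and the definition of $\theta_{s,u}$ identifies the suffix of $U_t$ with the spine label $U_{t-s}$ of the shifted process, so that $G\circ\theta_{s,u}$ truly realises $G$ in the shifted coordinate system. The inner portion for each fixed $u$ therefore factors as $e^{\omega\Zz_u(s)}$ times $\bigl(G\cdot F_u\cdot\sum_{w\in\tree_{t-s}}e^{\omega\Zz_w(t-s)}\bigr)\circ\theta_{s,u}$, multiplied by $\prod_{v\in\tree_s,\,v\ne u}F_v\circ\theta_{s,v}$. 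Conditioning on $\FF_s$ and invoking \autoref{l:Zbar-bp}, the shifted processes $\bar\Zz\circ\theta_{s,v}$ for $v\in\tree_s$ are independent with law $\lP$, which produces $\prod_{v\ne u}\lP[F_v]$ together with $\lE\bigl[G\cdot F_u\cdot\sum_{w\in\tree_{t-s}}e^{\omega\Zz_w(t-s)}\bigr]=e^{(t-s)\kappa(\omega)}\lQ^{t-s}[G\cdot F_u]$ by \eqref{e:lQ'} applied at time $t-s$.

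Assembling these factors collapses $e^{-t\kappa(\omega)}e^{(t-s)\kappa(\omega)}=e^{-s\kappa(\omega)}$, giving
\[ e^{-s\kappa(\omega)}\lE\left[\sum_{u\in\tree_s}\Ind_{A_u}e^{\omega\Zz_u(s)}\prod_{v\ne u}\lP[F_v]\cdot\lQ^{t-s}[G\cdot F_u]\right], \]
which by a second appeal to \eqref{e:lQ'} equals $\lE_{\lQ^s}$, and hence by \autoref{l:lQ-consistent} also $\lE_{\lQ^t}$, of $\Ind_A$ times the right-hand side integrand. The main obstacle is the change-of-variables step: one must track carefully how $\theta_{s,u}$ acts simultaneously on positions, the genealogy encoded in the labels $K_{\cdot}$, and the distinguished spine variable, so that $G\circ\theta_{s,U_s}$ reduces cleanly to $G$ evaluated at the spine of the shifted process. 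The remaining manipulations are formal consequences of the definitions of $\lQ^t$ and $\lQ^{t-s}$ and the unlabelled branching property.
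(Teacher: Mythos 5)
Your proof is correct and follows essentially the same route as the paper's: both unwind the definition of $\lQ^t$ into a $\lP$-expectation, split the sum over $\tree_t$ by ancestor at time $s$, apply the unlabelled branching property (\autoref{l:Zbar-bp}), and reassemble via the definitions of $\lQ^{t-s}$ and $\lQ^s$ together with \autoref{l:lQ-consistent}. The paper's only simplification is to fix $G = \Indic{U_{t-s}=u'}$ at the outset, which avoids the inner sum over $\Desc(s,u;t)$ by isolating a single descendant, whereas you carry a general $G$ throughout; both bookkeeping choices are sound.
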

  \begin{proof}
    By Kolmogorov's definition of conditional expectation and the definition of $\hat\FF_s$,
    it is sufficient to prove that, for $K$ an $\FF_s$-measurable functional and $u\in\tree$,
    \begin{equation}\label{e:Zbar-U-bp-inter}
      \lQ^t\mbiggl[ K \Indic{U_s = u} (G\circ \theta_{u,s}) \cdot \prod_{v\in\tree_s} (F_v \circ \theta_{v,s}) \mr]
      = \lQ^t\mbiggl[ K \Indic{U_s=u}
     \mBigl( \prod_{\substack{v\in\tree_s \\v\ne u}} \lP[F_v] \mr)
      \cdot \lQ^{t-s}[G\cdot F_u] \mr].
    \end{equation}
    Fixing $G = \Indic{U_{t-s}=u'}$, for some $u'\in\tree$, the left-hand side is equal to
    \begin{eqnarr*}
      \eqnarrLHS{
        e^{-t\kappa(\omega)}
        \lP\mbiggl[ K \Indic{u=\Anc(s;uu')} e^{\omega  \Zz_{uu'}(t)}
         \prod_{v\in \tree_s} (F_v\circ\theta_{v,s}) \mr]
      }
      \qquad \qquad &=&
      e^{-t\kappa(\omega)} \lP\mbiggl[ K e^{\omega \Zz_u(s)}
      \mBigl( \prod_{\substack{v\in\tree_s \\ v \ne u}} \lP[F_v] \mr)
      \cdot \lP[ F_u e^{\omega \Zz_{u'}(t-s)} \Indic{u'\in \tree_{t-s}} ] \mr]
      \\
      &=&
      e^{-s\kappa(\omega)}\lP\mbiggl[ K e^{\omega\Zz_u(s)} 
     \mBigl( \prod_{\substack{v \in \tree_s\\ v\ne u}} \lP[F_v] \mr)
      \cdot
      \lQ^{t-s}[ F_u \Indic{U_{t-s} = u'}] \mr] 
      \\
      &=&
      \lQ^s \mbiggl[ K \Indic{U_s=u} 
      \mBigl( \prod_{\substack{v \in \tree_s\\ v\ne u}} \lP[F_v] \mr)
      \cdot \lQ^{t-s}[ G\cdot F_u] \mr]
    \end{eqnarr*}
    where in the second line we have used \autoref{l:Zbar-bp} and the fact that the event $u=\Anc(s;uu')$ is equivalent
    to the event that $uu'$ is born after time $s$ (or $u'=\varnothing$);
    and in the third and fourth lines we have used the definition of $\lQ^\cdot$. An 
    appeal to \autoref{l:lQ-consistent} yields \eqref{e:Zbar-U-bp-inter}, which completes the proof.
  \end{proof}

  \skippar
  From now on we will drop the time-annotations $\lQ^t$ and simply use the notation $\lQ$.
  Our primary goal in the remainder of the article is to characterise the law
  of the process $(\bar\Zz,U)$ in terms of well-understood objects.

  \section{Forward construction of the process with spine}
  \label{s:forward}
  
  In this section, we give a construction of a Markov process with values
  in the set of point measures and with a certain distinguished line of
  descent. The process, which we will write as $(\bar\Yy,V)$,
  is regarded as being defined under $\lQ$,
  and we call it the
  \emph{decorated spine process} with parameters $(a,\sigma,\nu,\omega)$.
  In the next section, we will
  show that it coincides in law with the process $(\bar{\Zz},U)$
  described in \autoref{s:backward}.

  We start with a candidate for the motion of the spine particle
  itself.
  Let $\xi$ be a spectrally negative L\'evy process
  whose
  Laplace exponent has the L\'evy--Khintchine representation
  \[ \Ess_\omega\kappa(q) \coloneqq \kappa(q+\omega)-\kappa(\omega) = 
    \frac{1}{2}\sigma^2q^2 + a_{\omega}q + \int_{(0,1)} (y^q-1-ql(y)) \, \pi(\dd y) , \qquad q \ge 0, \]
  where
  \begin{eqnarr*}
    \pi(\dd y) &=& \sum_{i\ge 1} y^\omega \nu(p_i \in \dd y) 
    \text{, that is, } \int_{(0,1)} f(y) \, \pi(\dd y) = \int_{\Pp} \sum_{i\ge 1} p_i^\omega f(p_i) \, \nu(\dd \pp), \\
    l(y) &=& \Indic{\abs{\log y} \le 1}\log y, \\
    a_\omega &=& a+\omega\sigma^2 + \int_{\Pp} \mBigl[ 1 - p_1 + \sum_{i\ge 1} p_i^\omega l(p_i) \mr] \,\nu(\dd \pp) .
  \end{eqnarr*}
  Note that
  in particular, the Lévy measure of $\xi$ is given by
  the pushforward $\Pi \coloneqq \pi \circ \log^{-1}$.
  
  The motivation for this definition of $\xi$ is that, if
  $\nu(\Pp\setminus\Pp_1) <\infty$,
  then by \cite[Proposition 3.4]{BerSte}
  the process $(\Zz_{U_t}(t), t \ge 0)$ under $\lQ$
  is known to be equal in law to the
  process $\xi$; this is not difficult to prove
  even in the absence of said finiteness condition,
  but it will be a corollary of the main theorem in the next
  section, so we do not pursue this here.
  
  \skippar
  We regard $\xi$ as representing the position of the spine particle,
  and our goal is now to construct the rest of the branching L\'evy process
  around it. There will be
  three steps to this: firstly, we take the Poisson random measure
  giving the jump times and sizes
  of $\xi$. We then add decorations to this which indicate the additional offspring
  which should be present due to the branching structure; and in the final step,
  we graft independent branching L\'evy processes (under $\lP$) onto this structure.

  \skippar
  Next we require a short lemma establishing the existence of a conditional measure.

  \begin{lem}\relax\label{l:cond}
    For each $i \in \NN$,
    there exists a probability kernel $\nu_i$ from $(0,1)$ to $\Pp$ 
    which associates to each $y\in (0,1)$ a probability measure $\nu_i(\dd \pp \mid y)$, such that
    \[ \int_{\Pp} f(\pp) \, \nu(\dd\pp) = \int_{\Pp\times (0,1)} f(p_1,\dotsc,p_{i-1},y,p_{i+1},\dotsc) \, \nu_i(\dd \pp \mid y) \nu(p_i \in \dd y). \]
    \begin{proof}
    { \let\rho\varrho
    Let $\rho_i \from \Pp \to (0,1)$ be given by $\rho_i(p_1,p_2,\dotsc) = p_i$, and let $\nu_i = \nu \circ \rho_i^{-1}$.
    We seek measures $\nu_i(\cdot \given y)$, such that each $\nu_i(\cdot \given y)$ is a probability measure,
    $\nu_i(\Pp\setminus\rho_i^{-1}(y)\given y) = 0$, and $\nu(\dd\pp) = \int_{(0,1)} \nu_i(\dd\pp \given y) \nu_i(\dd y)$.
    We define first
    \[ h_i(y) = \begin{cases}
                    (1-y)^2 , & i = 1, \\
                    y^2, & i \ge 2.
                  \end{cases}
    \]
    Thus, $h_i(p_i) \le (1-p_1)^2$ for all $i$ and $\pp$, and in particular
    $\int_{\Pp} h_i(p_i)\, \nu(\dd\pp) = \int_{(0,1)} h_i(y) \, \nu_i(\dd y) < \infty$.
    Define $\bar{\lambda}_i(\dd\pp) = h_i(p_i)\nu(\dd\pp)/\int h_i\, \dd \nu_i$, a probability measure.
    Then by standard results on disintegration of measures (see \cite{Sim}, for instance)
    there exist probability kernels $\nu_i(\dd\pp \given y)$
    such that $\nu_i(\Pp\setminus\rho_i^{-1}(y)\given y) = 0$ and
    \begin{eqnarr*} \int_{\Pp} \frac{f(\pp)h_i(p_i)}{\int h_i\,\dd\nu_i} \, \nu(\dd\pp)
      = \int_{\Pp} f(\pp) \, \bar{\lambda}_i(\dd\pp)
      &=& \int_{(0,1)} \int_{\Pp} f(\pp) \, \nu_i(\dd\pp \given y) \, \bar{\lambda}_i\circ\rho_i^{-1}(\dd y)
      \\
      &=& \int_{(0,1)} \int_{\Pp} \frac{f(\pp) h_i(p_i)}{\int h_i\, \dd\nu_i} \, \nu_i(\dd\pp \given y) \, \nu_i(\dd y).
    \end{eqnarr*}
    This completes the proof.
    }
    \end{proof}
  \end{lem}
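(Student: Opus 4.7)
The plan is to reduce the statement to a standard disintegration-of-measures result applied to a well-chosen finite reweighting of $\nu$. The obstacle is that $\nu$ itself is in general an infinite measure on $\Pp$; the moment condition \eqref{e:nu} only controls the mass when weighted by $(1-p_1)^2$, so we cannot invoke a disintegration theorem for $\nu$ directly.

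Concretely, I would first introduce the coordinate projection $\varrho_i\from\Pp\to(0,1)$, $\varrho_i(\pp) = p_i$, and observe that the pushforward $\nu_i \coloneqq \nu\circ\varrho_i^{-1}$ coincides with the measure $\nu(p_i\in\dd y)$ appearing in the statement. Next I would choose a weight $h_i\from (0,1)\to (0,\infty)$ with the two properties that (a) $h_i(p_i) \le (1-p_1)^2$ for every $\pp \in \Pp$, which together with \eqref{e:nu} guarantees that $\int_\Pp h_i(p_i)\,\nu(\dd\pp)$ is finite, and (b) $h_i$ is strictly positive on the relevant range. The natural choice is $h_1(y) = (1-y)^2$ and $h_i(y) = y^2$ for $i\ge 2$, which uses the fact that $p_i \le 1-p_1$ when $i\ge 2$.

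Given such $h_i$, define the probability measure $\bar\lambda_i(\dd\pp) = h_i(p_i)\,\nu(\dd\pp)/Z_i$, where $Z_i = \int_\Pp h_i\,\dd\nu_i \in (0,\infty)$. Because $\bar\lambda_i$ is a Borel probability measure on the Polish space $\Pp$ and $\varrho_i$ is a measurable map into the Polish space $(0,1)$, standard disintegration theorems (for instance Simmons's form cited in the paper) provide a family of probability kernels $(\nu_i(\cdot\mid y))_{y\in (0,1)}$ with $\nu_i(\Pp\setminus\varrho_i^{-1}(\{y\})\mid y)=0$ and
\[
\int_\Pp g(\pp)\,\bar\lambda_i(\dd\pp)
  = \int_{(0,1)}\int_\Pp g(\pp)\,\nu_i(\dd\pp\mid y)\,\bar\lambda_i\circ\varrho_i^{-1}(\dd y).
\]
Since $\bar\lambda_i\circ\varrho_i^{-1}(\dd y) = h_i(y)\nu_i(\dd y)/Z_i$, applying this identity to the test function $g(\pp) = f(\pp)\cdot Z_i/h_i(p_i)$ and noting that $h_i(p_i) = h_i(y)$ on the support of $\nu_i(\cdot\mid y)$ yields the desired formula
\[
\int_\Pp f(\pp)\,\nu(\dd\pp)
  = \int_{(0,1)}\int_\Pp f(\pp)\,\nu_i(\dd\pp\mid y)\,\nu_i(\dd y),
\]
and the constraint $\nu_i(\Pp\setminus\varrho_i^{-1}(\{y\})\mid y)=0$ is exactly the statement that under $\nu_i(\cdot\mid y)$ the $i$-th coordinate is almost surely $y$, so the integrand can be rewritten as $f(p_1,\dotsc,p_{i-1},y,p_{i+1},\dotsc)$. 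The only delicate point in executing the plan is verifying measurability of the kernel $y\mapsto \nu_i(\cdot\mid y)$ and checking that one can rearrange the Polish space $\Pp$ so that the chosen disintegration theorem applies; both are routine once one observes that $\Pp$ is Polish in the $\ell^2$ topology.
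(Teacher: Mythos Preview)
Your proposal is correct and follows essentially the same approach as the paper: introduce the coordinate projection, reweight $\nu$ by the same function $h_i$ (with $h_1(y)=(1-y)^2$ and $h_i(y)=y^2$ for $i\ge 2$) to obtain a probability measure, and then apply a standard disintegration theorem. The paper's proof is slightly terser in that it does not spell out the substitution $g=fZ_i/h_i(p_i)$ or the Polish-space verification, but the argument is identical in substance.
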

  
  Finally we recall the notion of randomisation.
  If $M = \sum_{k} \delta_{s_k}$
  is a random point measure on a space $S$, and $q$ is a kernel from
  $S$ into a space $T$, then the $q$-randomisation of $M$ is
  the measure $\sum_{k} \delta_{s_k,t_k}$, where, 
  conditional on $M$ and independently for each $k$,
  $t_k$ is a random element of $T$
  chosen according to the measure $q(s_k,\cdot)$.
  We refer to \cite[Ch.~12, p.~226]{Kal-found} for a complete
  definition and properties.
  
  We are now in a position to define all the relevant quantities, and assemble
  them into the decorated process.
  
  \begin{defn}
  	\fakephantomsection\label{d:eta} 
    \begin{enumerate}	    
      \item
        Let $\mathtt{M}(\dd s, \dd z)$ be the
        jump measure of $\xi$, that is, 
        a Poisson random measure with intensity $\dd s \,\Pi(\dd z)$.
        Define $M$ to be the pushforward of $\mathtt{M}$ under
        the function $(s,z) \mapsto (s,e^z)$. Thus, $M(\dd s,\dd y)$ is a Poisson
        random measure with intensity $\dd s\, \pi(\dd y)$.
      \item 
        Now let $\lambda_i(\dd y) = y^\omega \nu(p_i\in \dd y)$.
        Observe that $\lambda_i$ is absolutely continuous
        with respect to $\pi$, and define $g_i = \dd \lambda_i / \dd \pi$, the Radon-Nikodym derivative.
        In particular, $\sum_{i\ge 1} g_i \equiv 1$ $\pi$-a.e..
      \item
        We define a probability kernel from $[0,\infty)\times (0,1)$ to $\NN\times \Pp$ by
        \[ q(s,y,\dd i,\dd \pp) = g_i(y) \nu_i(\dd\pp \mid y) \zeta(\dd i) , \]
        where $\zeta$ is counting measure on $\NN$,
        and a measure $\eta$ on $[0,\infty)\times (0,1)\times\NN\times \Pp$ by
        \[ \eta(\dd s, \dd y, \dd i, \dd \pp) \coloneqq q(s,y,\dd i,\dd\pp) \pi(\dd y) \dd s
          = y^\omega \nu(p_i\in \dd y) \nu_i(\dd \pp\mid y) \dd s \zeta(\dd i) . \]
        This has the following consistency properties:
        \[ 
          \int_{\NN\times \Pp} \eta(\dd s, \dd y, \dd i, \dd \pp)
          = \pi(\dd y) \dd s \quad \text{and} \quad 
          \int_{(0,1)\times \NN} \eta(\dd s, \dd y, \dd i, \dd \pp)
          = 
          \sum_{i\ge 1} p_i^{\omega}\nu(\dd \pp)\dd s .
        \]
      \item
        Let $N(\dd s, \dd y, \dd i, \dd \pp)$ be the $q$-randomisation of $M$.
        It is readily checked (via \cite[Lemma 12.2]{Kal-found}, say)
        that $N$ is a Poisson random measure with
        intensity $\eta(\dd s, \dd y, \dd i, \dd \pp)$.
    \end{enumerate}
  \end{defn}

  This completes the definition of the decorations, and we will now
  define a process $\Yy = (\Yy(t), t\ge 0)$.
  We regard the definition as being given under the probability measure $\lQt$,
  and we assume that the underlying probability space has been enlarged as required to
  accommodate it.
  \begin{defn}\label{d:decorated-spine}
    Let $(\Zz^{[s,j]})_{s\in \RR, j\in \NN}$ denote a collection of independent
    branching L\'evy processes with triple $(a,\sigma,\nu)$. Under the probability measure
    $\lQt$, the \emph{decorated Lévy process} $\Yy$, with parameters $(a,\sigma,\nu,\omega)$,
    is defined
    as follows:
    \[ \Yy(t) = \delta_{\xi(t)} + \int_{[0,t]\times(0,1)\times\NN\times\Pp}
      \sum_{j \ne i} \mbigl[\Zz^{[s,j]}(t-s) + \xi(s-) + \log p_j \mr] \, N(\dd s,\dd y, \dd i, \dd \pp) , \qquad t\ge 0,
    \]
    where the sum appearing on the right-hand side is over only those $j$ for which
    $p_j > 0$. The summand has the following interpretation:
    if $\mu = \sum_{i \in I} \delta_{\mu_i}$ is a point measure and $z \in \RR$, then
    $\mu + z \coloneqq \sum_{i\in I} \delta_{\mu_i +z} $.
    See \autoref{f:Y}.
  \end{defn}

  \begin{figure}[t]
      \centering
      \includegraphics[width=\textwidth]{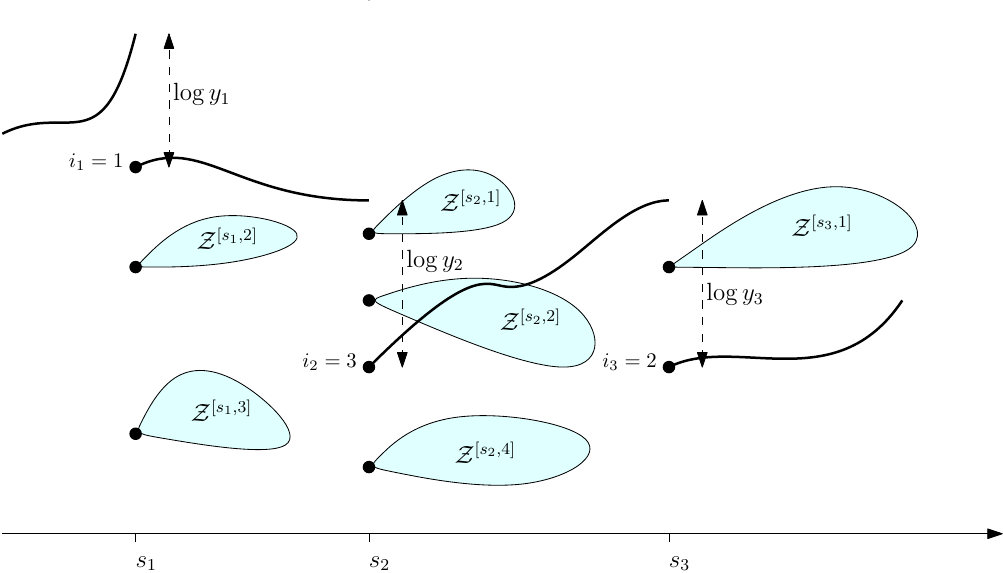}
      \caption{A sketch of the construction of the decorated L\'evy process.
      One begins with the thick black line, the path of the L\'evy
      process $\xi$,
      whose jumps determine the points $(s_k,y_k)$ which are the support
      of $M$. Given these
      points, one adds decorations which are the
      $\pp^{(k)}$ (represented by the solid disks)
      and the determination of the $i_k$.
      At given time and space points, determined by these decorations, one
      introduces independent copies $\mathcal{Z}^{[s_k,j]}$
      of the branching L\'evy process
      (the shaded shapes.)
      } 
      \label{f:Y}
  \end{figure}

  \medskip\noindent
  Let us consider the process $\Yy$ under truncation. Formally, this is required
  to give the particles labels; however, the truncated processes
  will also be a vital component in showing the equivalence of the two spine constructions.
  
  Let $b>0$, and recall that $k_b$ is given by \eqref{e:kb}.
  We define a random measure $N_b$
  by the mapping
  \begin{equation}
    \int N_b(\dd s,\dd y, \dd i, \dd\pp) f(s,y,i,\pp) = \int N(\dd s,\dd y, \dd i, \dd \pp) f(s,y,i, k_b(\pp)).
    \label{e:Nb}
  \end{equation}
  Let
  \[ A_b = \{ (y,i,\pp) : i \ge 2 \text{ and } y \le e^{-b} \} \]
  and define the first entry time
  \[ \tau_b \coloneqq \inf\{ t\ge 0: N(\{t\}\times A_b) > 0 \} ,\]
  which is a stopping time in the natural filtration of $N$.
  Then $\tau_{b}$ is the time at which the spine is killed under truncation at level $b$, and it has an exponential distribution with parameter 
  \[ \theta_b \coloneqq \int_{\Pp} \sum_{i\ge 2}p_i^\omega \Indic{p_i\le e^{-b}} \, \nu(\dd\pp) < \infty . \]
  We define the process $\Yy^{(b)}$ by the expression
  \begin{multline} \Yy^{(b)}(t) = \delta_{\xi(t)} \Indic{t<\tau_b} \\
    {} + \int_{[0,t]\times (0,1)\times \NN\times\Pp} N_b(\dd s,\dd y, \dd i,\dd\pp) 
    \sum_{j\ne i} \mbigl[ (\Zz^{[s,j]})^{(b)}(t-s) + \xi(s-)+\log p_j\mr] \Indic{s\le \tau_b},
    \label{e:Yb}
  \end{multline}
  where $(\Zz^{[s,j]})^{(b)}$ indicates that the immigrated copy of $\Zz$ is truncated at level $b$.
  
  With this definition, we have
  all the processes $\Yy^{(b_n)}$, for $n\ge 0$, defined on the same probability space as $\Yy$.
  Moreover, following \autorefref{r:misc}{i:misc:3}, we also have the processes
  $\bar\Yy^{(b_n)}$ all defined on the same space.
  Now suppose that $m<n$, and denote by $(\Yy^{(b_n)})^{(b_m)}$ the result of applying
  the truncation method of \eqref{e:bnbm} to the process $\Yy^{(b_n)}$.
  It follows that $(\Yy^{(b_n)})^{(b_m)} = \Yy^{(b_m)}$ almost surely;
  this can be verified by comparing the particles present at the first braching time $T_{b_n} \coloneqq \inf\{t\ge 0: \card\Yy^{(b_n)} \ge 2 \}$,
  and then proceeding iteratively.
  Thus, we have that, for every $t\ge 0$, $\Yy(t) = \lim_{n\to\infty} \Yy^{(b_n)}(t)$ as an increasing
  almost sure limit,
  with $\bar\Yy(t)$ being defined similarly.
  
  \skippar
  We now specify a \emph{distinguished line of descent} in $\bar\Yy$, which we denote by 
  $V = (V_t, t\ge 0)$ with $V_t \in \tree$. We want it to track the particle whose
  position is given by $\xi$, and it may be found explicitly as follows.
  
  Fix $t\ge 0$. Observe that
  \[ \int_{[0,t]\times(0,1)\times\NN\times\Pp} \Indic{i \ne 1} \eta(\dd s,\dd y,\dd i,\dd \pp) = t\int_{\Pp} \sum_{i\ge 2} p_i^\omega \,\nu(\dd \pp) < \infty , \]
  and so we may enumerate the atoms of $N\rvert_{[0,t]\times(0,1)\times(\NN\setminus\{1\})\times\Pp}$ 
  as $(s_j,y_j,i_j,\pp^{(j)})_{1\le j\le J(t)}$, with $0\le J(t)<\infty$ and $s_j< s_{j+1}$;
  moreover let $s_{J(t)+1}=t$.
  If $J(t)=0$, that is, $N$ restricted to ${[0,t]\times(0,1)\times(\NN\setminus\{1\})\times\Pp}$ has no atoms,
  we set $V_t = \varnothing$.
  Otherwise, we proceed by recursion as follows.
  Let $V_0 = \varnothing$. 
  For $j\ge 1$, we consider the children of particle $V_{s_{j-1}}$ which are born at time $s_j$.
  Among these offspring, we pick $V_{s_j}$ such that $\Yy_{V_{s_j}}(s_j) = \xi(s_j-)+\log y_j$.
  To be entirely explicit, recall the definition of $\Lu$ from \eqref{e:L},
  define
  $k_j = i_j - \card\{ k \ge 1: \Lu(p_k) < \Lu(p_{i_j})\}$,
  and let  
  $V_{s_j} = V_{s_{j-1}}(\Lu(y_j),K_{V_{s_{j-1}}}(s_j,\Lu(y_j)),k_j)$.
  For $s \in [s_{j-1},s_j)$ and $j\ge 1$, we let $V_s = V_{s_{j-1}}$.
  Thus, in particular, $V_t = V_{s_{J(t)}}$.
  
  \begin{defn}\label{d:deco2}
    The \emph{decorated spine process} with parameters $(a,\sigma,\nu,\omega)$ 
    is the process $(\bar\Yy(t),V_t)_{t\ge 0}$ under the measure $\lQt$.
  \end{defn}
  
  We remark that, by its construction,
  $(\bar{\Yy},V) = ((\bar\Yy(t),V_t), {t\ge 0})$ is a Markov process,
  and in particular it possesses a branching property exactly analogous to \autoref{l:Zbar-U-bp}.
  Moreover, it has similar regularity properties, as we now show.
  We need the following
  lemma, whose proof is quite technical but requires nothing more than
  the definition of $\Yy$ and an understanding of the additive martingale
  $W(\omega,\cdot)$
  of $\Zz$.
  \begin{lem}\label{l:cv-Ybn}
    For $q\in(\dom \kappa)^{\circ}\cap (1, \infty)$ and $t\ge 0$, 
    $\sup_{s\le t} d_q(\bar{\Yy}(s),\bar{\Yy}^{(b_n)}(s)) \to 0$ in probability
    as $n\to\infty$.
    \begin{proof}
      In the proof, we will use similar notation ($\tree_s$, $K_u(s)$, etc.)
      for the atoms of $\bar{\Yy}$ to that which we used for the atoms of $\bar{\Zz}$.
    
      The proof follows very similar lines to the proof of \autoref{l:cv-Zbn},
      and we again begin by using the triangle inequality to obtain
      \begin{multline}\label{e:cadlag-Y-inter}
        d_q(\bar\Yy(s),\bar\Yy^{(b_n)}(s))
        \le
        d_q\mBigl( \sum_{u \in \tree_s} \delta_{(u,K_u(s),\Yy_u(s))},
        \sum_{u\in\tree_s^{(b_n)}} \delta_{(u,K_u(s),\Yy_u(s))} \mr)
        \\
        {} +
        d_q\mBigl( \sum_{u \in \tree_s^{(b_n)}} \delta_{(u,K_u(s),\Yy_u(s))},
        \sum_{u\in\tree_s^{(b_n)}} \delta_{(u,K_u^{(b_n)}(s),\Yy_u(s))} \mr).
      \end{multline}
      To show that the second term vanishes as $n\to\infty$, we can use the same method as in \autoref{l:cv-Zbn}, which gives in the first instance,
      for $r\ge 0$,
      \begin{multline}
        \lEQ\biggl[
        \sup_{s\le t} d^{(r)}\mBigl( \sum_{u \in \tree_s^{(b_n)}} \delta_{(u,K_u(s),\Yy_u(s))},
        \sum_{u\in\tree_s^{(b_n)}} \delta_{(u,K_u^{(b_n)}(s),\Yy_u(s))} \mr)
        \biggr] \\
        \le
        \sum_{l\ge n+1} \exp\bigl(-(l+e^{4b_l})\bigr) \lEQ[\#\Yy^{(b_l)}(t)].
        \label{e:cadlag-Y-inter2}
      \end{multline}
      Therefore, we need to bound $\lEQ[\# \Yy^{(b_n)}(t)]$. We do this as follows,
      beginning with:  
      \begin{eqnarr*}
        \lEQ[ \# \Yy^{(b_n)}(t)]
        &=&
        \lEQ \biggl[ \Indic{t<\tau_{b_n}}
        + \int_{[0,t]\times (0,1)\times \NN \times \Pp}
        N_{b_n}(\dd s, \dd y, \dd i, \dd \pp)
        \sum_{j\ne i} \# \bigl(\Zz^{[s,j]}\bigr)^{(b_n)}(t-s) \Indic{s\le \tau_{b_n}}
        \biggr] \\
        &=& \lQ(t<\tau_{b_n})
        + \lEQ\biggl[ \int_{[0,\tau_{b_n}]\times A_{b_n}^c}
        \eta_{b_n}(\dd s, \dd y, \dd i, \dd \pp)
        \sum_{j \ne i} \lE[\# \Zz^{(b_n)}(t-s) ]\biggr],
      \end{eqnarr*}
      where in the final equality we use the fact that $N_{b_n}$ restricted to 
      $[0,\infty)\times A_{b_n}^c$ is independent of $\tau_{b_n}$, together with the
      compensation formula for the Poisson random measure $N_{b_n}$ with 
      intensity measure $\eta_{b_n}$,
      which is $\eta$ restricted to $[0,\infty)\times A_{b_n}^c$.
      
      Recall that $\tau_b$ is an exponentially distributed
      random variable with rate $\theta_b$.
      Moreover,
      $\eta_b\rvert_{[0,\infty)\times A_b^c} = \eta^{(b)}$, where
      $\eta^{(b)}$ is  
      the measure $\eta$ constructed as in \autoref{d:eta} for the parameters
      $(a,\sigma,\nu^{(b)},\omega)$, that is,
      \begin{equation}\relax\label{e:eta-n}
        \eta^{(b)}(\dd s, \dd y, \dd i, \dd \pp)
        = y^\omega \nu^{(b)}(p_i\in \dd y) \nu^{(b)}_i(\dd \pp\mid y) \dd s \zeta(\dd i).
      \end{equation}
      Thus we can rewrite the previous expression to obtain that
      \begin{eqnarr*}
        \lEQ[\# \Yy^{(b_n)}(t)]
        &=&
        e^{-t \theta_{b_n}}
        + \int_{[0,t] \times (0,1)\times \NN\times \Pp}
        \eta^{(b_n)}(\dd s, \dd y, \dd i, \dd \pp)
        (\#\pp - 1) \lE[\# \Zz^{(b_n)}(t-s)] e^{-s \theta_{b_n}},
      \end{eqnarr*}
      where $\# \pp$ is the number of non-zero elements in $\pp$.
      Continuing to evaluate the components of this expression, we
      obtain
      \begin{eqnarr*}
        \lEQ[\# \Yy^{(b_n)}(t)]
        &=& e^{-t\theta_{b_n}}
        + \int_0^t e^{(t-s)\kappa^{(b_n)}(0)} e^{-s\theta_{b_n}}
        \cdot \int_{\Pp} (\# \pp - 1) \sum_{i\ge 1} p_i^\omega \, \nu^{(b_n)}(\dd \pp).
      \end{eqnarr*}
      We observe that
      \[
      \int_{\Pp} (\# \pp - 1) \sum_{i\ge 1} p_i^\omega \, \nu^{(b_n)}(\dd \pp)
      = \int_{\Pp \setminus \Pp_1}\Indic{p_1 > e^{-b_n}} \mbigl( \sum_{i\ge 2} \Indic{p_i > e^{-b_n}}\mr) \mbigl(p_1^{\omega}+\sum_{i\ge 2} p_i^\omega \mr)\, \nu^{(b_n)}(\dd \pp). 
      \]
      If $\omega \ge 0$, then $p_1^\omega \le 1$,
      whereas if $\omega<0$, then
            $\Indic{p_1>e^{-b_n}}p_1^\omega \le e^{-\omega b_n}$.
      In either case, we have
      \begin{eqnarr*}
      \int_{\Pp \setminus \Pp_1}\Indic{p_1 > e^{-b_n}} \mbigl( \sum_{i\ge 2} \Indic{p_i > e^{-b_n}}\mr) p_1^{\omega}\, \nu^{(b_n)}(\dd \pp) 
     & \le& \max(1, e^{-\omega b_n} ) \int_{\Pp} \mbigl( \sum_{i\ge 2} \Indic{p_i > e^{-b_n}}\mr) \,\nu^{(b_n)}(\dd\pp) \\
      &=&\max(1, e^{-\omega b_n} )  \kappa^{(b_n)}(0).
      \end{eqnarr*}
     The assumption $\omega \in \dom\kappa$ implies that
     $\int_{\Pp} \sum_{j\ge 2} p_j^\omega \nu^{(b_n)}(\dd\pp) \le \int_{\Pp} \sum_{j\ge 2} p_j^\omega \nu(\dd\pp) <\infty$,
	   and the fact that $\sum_{i\ge 1} p_i\le 1$ yields
     $\sum_{i\ge 2} \Indic{p_i> e^{-b_n}} \le e^{b_n}$; thus, we obtain 
	\[\int_{\Pp \setminus \Pp_1}\Indic{p_1 > e^{-b_n}} \mbigl( \sum_{i\ge 2} \Indic{p_i > e^{-b_n}}\mr) \mbigl(\sum_{i\ge 2} p_i^\omega \mr)\, \nu^{(b_n)}(\dd \pp)
	\le e^{b_n} \int_{\Pp} \sum_{i\ge 2} p_i^{\omega} \nu(\dd\pp).\]
	It follows that
      \[
        \lEQ[\# \Yy^{(b_n)}(t)]
        \le
        e^{-t\theta_{b_n}} 
        + \frac{1-e^{-t(\kappa^{(b_n)}(0) + \theta_{b_n})}}{\kappa^{(b_n)}(0) + \theta_{b_n}}
        e^{t\kappa^{(b_n)}(0)} 
        \cdot
        \mBigl(
        \max(1, e^{-\omega b_n} )\kappa^{(b_n)}(0)
        + e^{b_n} \int_{\Pp} \sum_{i\ge 2} p_i^{\omega} \nu(\dd\pp)
        \mr).
      \]
      Recall from the proof of \autoref{l:cv-Zbn} that $\kappa^{(b_n)}(0)<C e^{2 b_n}$
      for some $C>0$ depending only on $\nu$; thus, for some $C'>0$, we have
      \[
      \lEQ[\# \Yy^{(b_n)}(t)]\le  e^{t C e^{2 b_n} + C' b_n}.
      \] 
      This implies that the right-hand side of \eqref{e:cadlag-Y-inter2},
      and thence the second term of \eqref{e:cadlag-Y-inter}, converges
      to $0$ as $n\to\infty$.
      
      We turn now to the first term of \eqref{e:cadlag-Y-inter}.
      Using the same trick as in \eqref{e:trick}, we select
      $q'$ arbitrarily such that $q>q'$ and $q' \in (\dom \kappa)^\circ$,
      and obtain
      \begin{eqnarr*}
        d_q\mBigl( \sum_{u \in \tree_s} \delta_{(u,K_u(s),\Yy_u(s))},
        \sum_{u\in\tree_s^{(b_n)}} \delta_{(u,K_u(s),\Yy_u(s))} \mr)
        &\le&
        \frac{q}{q-q'}
        \sum_{u \in \tree_s\setminus \tree_s^{(b_n)}} e^{q' \Yy_u(s)}.
      \end{eqnarr*}
      We now use the definition of $\Yy$ and $\Yy^{(b_n)}$ to write
      $
        \sum_{u \in \tree_s\setminus \tree_s^{(b_n)}} e^{q' \Yy_u(s)}
        = I_1(s) + I_2(s) + I_3(s),
      $
      where for reasons of brevity the terms $I_i$ will be defined as we proceed.
      The first of these is
      \begin{eqnarr*}
        I_1(s)
        &=&
        \int_{[0,s]\times (0,1)\times \NN\times \Pp} N(\dd v, \dd y,\dd i, \dd \pp)
        \sum_{j\ne i} \sum_{u \in \tree_{s-v}^{[v,j]}}
        e^{q' \bigl[ \Zz_u^{[v,j]}(s-v) + \log p_j + \xi(v-) \bigr]}
        \\
        &&
        {} -
        \int_{[0,s]\times (0,1)\times \NN\times \Pp} N(\dd v, \dd y,\dd i, \dd \pp)
        \sum_{j\ne i} \sum_{u \in \tree_{s-v}^{[v,j]}}
        e^{q' \bigl[ (\Zz_u^{[v,j]})^{(b_n)}(s-v) + \log p_j + \xi(v-) \bigr]}.
      \end{eqnarr*}
      If we define
      $M_{v,j}^{(n)}(t)
      = \sum_{u\in \tree_t} e^{q' \Zz^{[v,j]}_u(t)} - \sum_{u\in \tree_t^{(b_n)}} e^{q' \Zz^{[v,j]}_u(t)}$
      for arbitrary $v\ge 0$ and $j \ge 1$,
      and observe that this is a non-negative martingale in its own filtration,
      it then follows that
      \begin{eqnarr*}
        \eqnarrLHS{\lEQ\mbigl[ \min\mbigl\{ \sup_{s\le t}
        I_1(s) , 1\mr\} \mr]}
        &\le&
        \lEQ\mbiggl[
        \int_{[0,t]\times (0,1)\times \NN\times \Pp} N(\dd v, \dd y,\dd i, \dd \pp)
        e^{q'\xi(v-)}
        \sum_{j\ne i}
        p_j^{q'}
        \min\bigl\{
        \sup_{s\le t}
        M_{v,j}^{(n)}(s-v)
        ,1\bigr\}
        \mr]
        \\
        &\le&
        \lEQ\mbiggl[
        \int_{[0,t]\times (0,1)\times \NN\times \Pp} N(\dd v, \dd y,\dd i, \dd \pp)
        e^{q'\xi(v-)}
        \sum_{j\ne i}
        p_j^{q'}
        \min
        \bigl\{\sup_{w\le t} M_{v,j}^{(n)}(w) , 1 \bigr\}
        \mr]
        \\
        &=&
        \int_0^t e^{v \Ess_\omega \kappa(q')} \, \dd v
        \cdot
        \int_{\Pp} \sum_{i\ge 1} \sum_{j\ne i} p_i^\omega   p_j^{q'} \, \nu(\dd \pp)
        \cdot
        \lE\biggl[ 
        \min \bigl\{ \sup_{w\le t} M_{0,1}^{(n)}(w) ,1\bigr\}
        \biggr].\IEEEyesnumber \label{e:cadlag-Y-I1}
      \end{eqnarr*}
      We first claim that if $q' \ge 1$ and $q'\in \dom \kappa$, then
      \begin{equation}\label{e:bd-int-N}
        \int_{\Pp} \sum_{i\ge 1} \sum_{j\ne i} p_i^\omega   p_j^{q'} \, \nu(\dd \pp)
        =
        \int_{\Pp\setminus \Pp_1} \sum_{i\ge 1} \sum_{j\ne i} p_i^\omega   p_j^{q'} \, \nu(\dd \pp)
        < \infty.
      \end{equation}
      We begin with the estimate
      \[
        \sum_{i\ge 1} \sum_{j\ne i} p_i^\omega p_j^{q'}
        \le
        2p_1^\omega \sum_{j\ge 2} p_j^{q'}
        + \Bigl( \sum_{i\ge 2} p_i^\omega\Bigr)\Bigl( \sum_{i\ge 2} p_i^{q'}\Bigr) .
      \]
      If $\omega \ge 0$, then
      \eqref{e:bd-int-N} follows from
      the fact that $q'\in \dom \kappa$ and $q' \ge 1$.
      If $\omega < 0$, then since $\pp\in\Pp\setminus\Pp_1$,
      we have $p_1^\omega \le p_2^\omega \le \sum_{i\ge 2} p_i^\omega$
      and \eqref{e:bd-int-N} again follows.
       
       
       Finally, using       
       Doob's maximal inequality just as in \autoref{l:cv-Zbn},
       we see that $\sup_{w\le t} M_{0,1}^{(n)}(w)$ converges to $0$ in probability as $n\to\infty$.
       Thus, the right-hand side of \eqref{e:cadlag-Y-I1} approaches $0$ also,
       and so $\sup_{s\le t} I_1(s)$ tends to $0$ in probability.

      This deals with the term $I_1$, which is the main difficulty. The term
      $I_2$ is defined as
      \begin{eqnarr*}
        I_2(s)
        &=&
        \int_{[0,s]\times (0,1)\times \NN\times \Pp} N(\dd v, \dd y,\dd i, \dd \pp)
        \sum_{j\ne i} \sum_{u \in \tree_{s-v}^{[v,j]}}
        e^{q' \bigl[ (\Zz_u^{[v,j]})^{(b_n)}(s-v) + \log p_j + \xi(v-) \bigr]}
        \\
        &&
        {} -
        \int_{[0,s]\times (0,1)\times \NN\times \Pp} N_{b_n}(\dd v, \dd y,\dd i, \dd \pp)
        \sum_{j\ne i}
        \sum_{u \in \tree_{s-v}^{[v,j]}}
        e^{q' \bigl[ (\Zz_u^{[v,j]})^{(b_n)}(s-v) + \log p_j + \xi(v-) \bigr]}
        \\
        &=&
        \int_{[0,s]\times (0,1)\times \NN\times \Pp} N(\dd v, \dd y,\dd i, \dd \pp)
        \sum_{j\ne i} p_j^{q'} \Indic{j\ne 1 \text{ and } p_j<e^{-b_n}} \sum_{u \in \tree_{s-v}^{[v,j]}}
        e^{q' \bigl[ (\Zz_u^{[v,j]})^{(b_n)}(s-v) + \xi(v-) \bigr]}.
      \end{eqnarr*}
      Using a similar technique to the one for the term $I_1$, we obtain
      \begin{multline}
        \lEQ\bigl[ \min\bigl\{ \sup_{s\le t} I_2(s) ,1 \bigr\}\bigr]
        \\ {} \le
        \int_0^t e^{v\Ess_\omega \kappa(q')} \, \dd v
        \cdot
        \int_{\Pp} \sum_{i\ge 1} \sum_{j\ne i} p_i^\omega p_j^{q'}
        \Indic{j\ne 1 \text{ and } p_j<e^{-b_n}}
        \, \nu(\dd \pp)
        \cdot
        \lE\mbigl[ \min\bigl\{\sup_{r\le t} W(q',r),1\bigr\} \mr].
        \label{e:cadlag-Y-I2}
      \end{multline}
      We can then make the estimate 
      \begin{equation}\label{e:bd-supW}
        \lE\mbigl[ \min\bigl\{\sup_{r\le t} W(q',r),1\bigr\} \mr]
        \le \lE\mbigl[ \min\bigl\{\sup_{r\le t} W(q',r),1\bigr\}^2 \mr]^{1/2}
        \le \lE[W(q',t)]^{1/2} < \infty,
      \end{equation}
      where in the second inequality,
      we use a variation on Doob's $L^2$-%
      inequality (see the proof of Corollary II.1.6 in \cite{RY-cmbm}.)
      Moreover, take $\epsilon>0$ such that $q'-\epsilon>1$ and $q'-\epsilon \in \dom \kappa$, then
      \[
        \sum_{i\ge 1} \sum_{j\ne i} p_i^\omega p_j^{q'}
        \Indic{j\ne 1 \text{ and } p_j<e^{-b_n}}
        \le
        e^{-\epsilon b_n} \sum_{i\ge1}\sum_{j\ne i} p_i^\omega p_j^{q'-\epsilon },
      \]
      and just as in the $I_1$ case, we know that
      $\int_{\Pp} \sum_{i\ge 1}\sum_{j\ne i} p_i^\omega p_j^{q'-\epsilon } \,\nu(\dd\pp) <\infty$.
      It follows that the right hand side of \eqref{e:cadlag-Y-I2} tends to zero,
      and thus $\sup_{s\le t} I_2(s) \to 0$ in probability.
      
      Lastly, we turn to $I_3$. This term is defined as
      \begin{eqnarr*}
        I_3(s)
        &=&
        \int_{[0,s]\times (0,1)\times \NN\times \Pp} N_{b_n}(\dd v, \dd y,\dd i, \dd \pp)
        \sum_{j\ne i}
        \sum_{u \in \tree_{s-v}^{[v,j]}}
        e^{q' \bigl[ (\Zz_u^{[v,j]})^{(b_n)}(s-v) + \log p_j + \xi(v-) \bigr]}
        \\
        &&
        {} -
        \int_{[0,s]\times (0,1)\times \NN\times \Pp} N_{b_n}(\dd v, \dd y,\dd i, \dd \pp)
        \sum_{j\ne i}
        \sum_{u \in \tree_{s-v}^{[v,j]}}
        e^{q' \bigl[ (\Zz_u^{[v,j]})^{(b_n)}(s-v) + \log p_j + \xi(v-) \bigr]}
        \Indic{v\le \tau_{b_n}}
        \\
        &=&
        \int_{(\tau_{b_n},s]\times(0,1) \times \NN\times \Pp}
        N_{b_n}(\dd v, \dd y,\dd i, \dd \pp)
        e^{q'\xi(v-)} \sum_{j\ne i} p_j^{q'}
        \sum_{u \in \tree_{s-v}^{[v,j]}} e^{q'(\Zz_u^{[v,j]})^{(b_n)}(s-v)}.        
      \end{eqnarr*}
      In particular,
      \begin{eqnarr*}
        \sup_{s\le t} I_3(s)
        &\le&
        \int_{(\tau_{b_n},t]\times(0,1) \times \NN\times \Pp}
        N_{b_n}(\dd v, \dd y,\dd i, \dd \pp)
        e^{q'\xi(v-)} \sum_{j\ne i} p_j^{q'}
        \sup_{w\le t}
        \sum_{u \in \tree_{w}^{[v,j]}} e^{q'(\Zz_u^{[v,j]})^{(b_n)}(w)} .  
      \end{eqnarr*}
      Making a change of variable in the integral, and using the independence
      properties of the Poisson point process $N_{b_n}$, we obtain
      \begin{IEEEeqnarray*}{rCll}
        \IEEEeqnarraymulticol{4}{l}{
          \lEQ\mbigl[\min\bigl\{\sup_{s\le t} I_3(s),1\bigr\}\mr]
        } \\
        &\le&
        \lEQ\biggl[
        &
        \Indic{\tau_{b_n} \le t}
        e^{q' \xi(\tau_{b_n})}
        \\
        &&& {} \cdot
        \lEQ\biggl[
        \int_{(r,t]\times (0,1) \times \NN\times \Pp}
        N_{b_n}(\dd v, \dd y,\dd i, \dd \pp)
        e^{q' \xi(v-)}
        \sum_{j\ne i}
        p_j^{q'}
        \min\bigl\{ \sup_{w \le t} e^{q'(\Zz_u^{[v,j]})^{(b_n)}(w)} ,1 \bigr\}
        \biggr]_{r = \tau_{b_n}}
        \biggr] \\
        &\le&
        \IEEEeqnarraymulticol{2}{l}{
         \lEQ[ e^{q' \xi(\tau_{b_n})} \Indic{\tau_{b_n} \le t}]
         \cdot
        \int_0^t e^{v\Ess_\omega \kappa(q')} \, \dd v
        \cdot
        \int_{\Pp} \sum_{i\ge 1} \sum_{j\ne i} p_i^\omega p_j^{q'}
        \, \nu(\dd \pp)
        \cdot
        \lE\mbigl[ \min\bigl\{\sup_{r\le t} W(q',r),1\bigr\} \mr]
      }.
      \end{IEEEeqnarray*}
      By \eqref{e:bd-int-N} and \eqref{e:bd-supW}, we are left with just the first expectation, for which we have:
      \begin{eqnarr*}
        \lEQ[\Indic{\tau_{b_n} \le t} e^{q'\xi(\tau_{b_n})} ]
        &\le&
        \lEQ[\Indic{\tau_{b_n} \le t} \sup_{s\le t} e^{q'\xi(s)} ]
        \le
        \lQ(\tau_{b_n} \le t)^{1/2}
        \lE\bigl[ \bigl(\sup_{s\le t} e^{q'\xi(s)} \bigr)^2\bigr]^{1/2}.
      \end{eqnarr*}
      The second term on the right-hand side may be bounded using Doob's $L^2$-%
      inequality for the exponential martingale of the L\'evy process $\xi$;
      and the first term approaches zero as $n\to\infty$ since
      $\tau_{b_n}$ has an exponential distribution whose parameter approaches zero.
      It follows that $\sup_{s\le t} I_3(s) \to 0$ in probability.
      
      Having shown the necessary convergence for each term $I_i$, we have now proved
      that the first term in \eqref{e:cadlag-Y-inter} converges to zero in probability,
      and this
      completes the proof.
    \end{proof}
  \end{lem}
  
  An immediate consequence of the lemma is the regularity of $\bar{\Yy}$.
  \begin{cor}
  Let $q\in (\dom \kappa)^{\circ}\cap (1,\infty)$. Then the process $\bar{\Yy}$ possesses a c\`adl\`ag version in $\mbigl(\Mmp(\mathbf{X}), d_q\mr)$.
  \end{cor}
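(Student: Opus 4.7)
The plan is to parallel the proof of \autoref{c:cadlag}, with \autoref{l:cv-Ybn} playing the role of \autoref{l:cv-Zbn}, so the argument reduces to a uniform-on-compacts limit of càdlàg processes plus an approximation lemma.

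First, I would verify that, for each fixed $n\ge 0$, the truncated process $\bar{\Yy}^{(b_n)}$ is càdlàg in $(\Mmp(\mathbf{X}), d_q)$. The killed-spine component $\delta_{\xi(\cdot)}\Indic{\cdot<\tau_{b_n}}$ is càdlàg because $\xi$ is a Lévy process and $\tau_{b_n}$ is a stopping time. Each immigrated process $(\Zz^{[s,j]})^{(b_n)}$ is càdlàg by \autoref{c:cadlag} applied to the branching Lévy process truncated at level $b_n$, and the label/mark decoration is read off deterministically from the unlabelled process, as noted in \autorefref{r:misc}{i:misc:3}. It remains to observe that only finitely many immigration events contribute on any compact time interval $[0,t]$, which follows from the finite-mean estimate on $\lEQ[\#\Yy^{(b_n)}(t)]$ derived inside the proof of \autoref{l:cv-Ybn}; together these reduce $\bar{\Yy}^{(b_n)}$ to a finite sum of càdlàg summands shifted by a càdlàg spine, hence itself càdlàg.

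Next, by \autoref{l:cv-Ybn}, for each rational $t\ge 0$ we have $\sup_{s\le t} d_q(\bar{\Yy}(s), \bar{\Yy}^{(b_n)}(s)) \to 0$ in probability. A standard diagonal extraction provides a subsequence $(n_k)$ along which this convergence is almost sure for every rational $t$, and by monotonicity in $t$ of the suprema, for every real $t\ge 0$. Along this subsequence, $\bar{\Yy}$ is the uniform-on-compacts limit, almost surely, of a sequence of càdlàg paths taking values in the complete separable metric space $(\Mmp(\mathbf{X}), d_q)$. A classical result (see \cite{Bil-conv}) then yields that such a limit admits a càdlàg modification, which gives the corollary.

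The main obstacle is the first step: verifying càdlàg regularity of each $\bar{\Yy}^{(b_n)}$, because the process is built from a stochastic integral against the Poisson random measure $N_{b_n}$ whose total intensity is not a priori finite on compact sets. The integrability computations appearing inside the proof of \autoref{l:cv-Ybn} are precisely what is needed to control the number of immigration events up to time $t$; once this is in hand, everything else is routine.
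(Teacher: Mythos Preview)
Your proposal is correct and follows essentially the same route as the paper, which states only that the corollary is an immediate consequence of \autoref{l:cv-Ybn} (mirroring the derivation of \autoref{c:cadlag} from \autoref{l:cv-Zbn} via \cite[Proposition~2]{Ber-cfrag}). You have simply supplied the details the paper omits: c\`adl\`ag regularity of each truncated $\bar{\Yy}^{(b_n)}$, subsequence extraction, and the standard fact that a uniform-on-compacts limit of c\`adl\`ag paths in a complete metric space is c\`adl\`ag; one small sharpening is that the finiteness of immigration events on $[0,t]$ follows more directly from $\mu_{b_n}<\infty$ (as in the proof of \autoref{l:fb-n}) than from the particle-count estimate.
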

  
  We will fix from now on a metric $d_q$ with $q\in (\dom \kappa)^{\circ}\cap (1, \infty)$, and assume that the process $\bar{\Yy}$ is c\`adl\`ag.
  
  \section{The spine decomposition theorem}
  \label{s:fb}
  
  We now show that the forward and backward constructions of the process
  with distinguished spine under $\lQ$, i.e.\ $(\bar \Zz(t),U_t)_{t\ge 0}$
  and $(\bar \Yy(t),V_t)_{t\ge 0}$,
  in fact have the same law.
 
  We use a truncation technique,
  recalling the definitions of $\Zz^{(b)}$, $\nu^{(b)}$ 
  and the sequence $(b_n)$ from \autoref{s:construct-truncate}.
  In order to simplify notation in the proof, we define the measure $\lPn$ such that
  the law of $(\bar \Zz,\tree_\cdot)$ under $\lPn$ is that of  $(\bar \Zz^{(b_n)},\tree^{(b_n)}_\cdot)$ under $\lP$.
  For $n\ge 1$, we consider on the one hand the
  measure $\lQn$ constructed from $\lPn$ as follows: 
  \begin{equation}\label{e:Qn}
    \lEQn[ F(\bar \Zz(s), s \le t) \Indic{U_t=u}] 
    = e^{-t\kappa^{(b_n)}(\omega)} \lE^{(b_n)}[ F(\bar \Zz(s), s \le t) e^{\omega \Zz_u(t)}],
  \end{equation}
  where $F$ is a continuous bounded functional on $D([0,\infty),\mathcal{M}_p(\mathbf{X}))$, 
  and we use the convention $e^{\Zz_u(t)} = 0$ if $u\notin \tree_t$.
  On the other hand, we
  regard the process $(\bar \Yy,V)$ under $\lQtn$ as being the decorated spine process
  with parameters $(a,\sigma,\nu^{(b_n)},\omega)$.
  
  \begin{lem}\relax\label{l:fb-n}
    Under $\lQn$, $(\bar \Zz(t),U_t)_{t\ge 0}$ is equal in law to $(\bar \Yy(t),V_t)_{t\ge 0}$.
  \end{lem}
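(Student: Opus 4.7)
The approach is direct verification. I will show that for every $t \ge 0$, $u \in \tree$, and every bounded continuous functional $F$ on $D([0,t], \Mmp(\mathbf{X}))$,
\[
  \lEQn\bigl[F(\bar\Zz(s),s\le t)\,\Indic{U_t=u}\bigr] = \lEQtn\bigl[F(\bar\Yy(s),s\le t)\,\Indic{V_t=u}\bigr],
\]
which by a monotone class argument extends to all bounded measurable $F$ and yields equality in law of the two processes. The key enabling feature is that the finite branching rate $\lambda_{b_n}<\infty$ makes all branching events locally finite, so both constructions admit explicit descriptions. The left-hand side is handled via the defining identity~\eqref{e:Qn} together with the construction of $\bar\Zz$ in \autoref{s:construct-truncate}; the right-hand side via the forward construction of \autoref{d:decorated-spine} and~\autoref{d:deco2}.

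I would proceed by induction on the generation $|u|$, using at each step the branching property---\autoref{l:Zbar-U-bp} on the backward side, and the analogous Markov property of $(\bar\Yy,V)$ remarked on after \autoref{d:deco2} on the forward side---to reduce the comparison to a single generation. For the base case $u=\varnothing$ the spine never leaves the Eve particle: conditioning on the independent L\'evy trajectory $\xi_\varnothing$ and the Poisson process $(T_k,\pp^{(k)})_{k\ge 1}$ of branching events of $\varnothing$, the subtrees spawned at each $T_k$ by non-spine children are integrated out via the additive martingale $W^{(b_n)}(\omega,\cdot)$. This factorises the weight $e^{\omega\Zz_\varnothing(t)-t\kappa^{(b_n)}(\omega)}$ as the product of an Esscher-transform martingale of $\xi_\varnothing$ (turning it into a L\'evy process with Laplace exponent $\Ess_\omega\Psi^{(b_n)}$) and a Poissonian tilt of the branching events that biases each partition $\pp^{(k)}$ by $(p_1^{(k)})^\omega$; \autoref{l:cond} then rewrites this biased law as the restriction to $\{i=1\}$ of the intensity $\eta^{(b_n)}$ driving the atoms of $N$. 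A parallel computation on the forward side, restricted to the event $V_t=\varnothing$, produces the same law. For the inductive step $|u|\ge 1$, let $S$ be the first time at which the spine's lineage jumps off $\varnothing$, and let $\pp$ be the realised partition and $i\ge 2$ the chosen child. By the branching property, after time $S$ both processes reduce to a fresh decorated-spine process whose spine label has length $|u|-1$ and which is independent of the pre-$S$ data, so the induction hypothesis closes the argument once the joint law of $(S,\pp,i)$ is shown to agree on the two sides.

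\textbf{Main obstacle.} The non-trivial check is therefore this joint-law identification at the first spine-branching event. On the forward side, $(S,\pp,i)$ is read directly from the Poisson random measure $N$ with intensity $\eta^{(b_n)}(\dd s,\dd y,\dd i,\dd\pp)=y^\omega\nu^{(b_n)}(p_i\in\dd y)\,\nu_i^{(b_n)}(\dd\pp\mid y)\,\dd s\,\zeta(\dd i)$, restricted to $i\ge 2$. On the backward side, the same law must emerge from size-biasing: integrating $\xi_\varnothing$ on $[0,S]$ against its exponential martingale, and the non-spine subtrees born before $S$ against their additive martingales, each child $i\ge 2$ at each branching event is weighted by $p_i^\omega$. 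The two sides are reconciled by the algebraic identity
\[
  \kappa^{(b_n)}(\omega) - \Psi^{(b_n)}(\omega) = \int_{\Pp\setminus\Pp_1}\sum_{i\ge 1} p_i^\omega\,\nu^{(b_n)}(\dd\pp) - \lambda_{b_n},
\]
which balances the total tilting rates, together with \autoref{l:cond} and the Poisson-randomisation construction of $N$ in \autoref{d:eta}. With this identity in hand the two Poisson intensities are seen to coincide, and the induction goes through.
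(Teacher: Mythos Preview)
Your proposal is correct and identifies the same core computation as the paper: matching the size-biased branching law on the backward side to the Poisson intensity $\eta^{(b_n)}$ on the forward side, via the rate identity
\[
  \mu_{b_n} \coloneqq \lambda_{b_n} + \kappa^{(b_n)}(\omega) - \Psi^{(b_n)}(\omega)
  = \int_{\Pp\setminus\Pp_1}\sum_{i\ge 1} p_i^\omega\,\nu^{(b_n)}(\dd\pp),
\]
together with the Esscher transform of the inter-branching L\'evy motion and the disintegration of \autoref{l:cond}.

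The organisational difference is this. The paper decomposes both processes at the \emph{first branching time} $T=\inf\{t:\#\bar\Zz(t)\ge 2\}$ and verifies that the full joint law there---pre-$T$ spine motion, branching partition, spine-child index $i$, and the independent off-spine subtrees---agrees on the two sides; since both $(\bar\Zz,U)$ and $(\bar\Yy,V)$ are Markov, a single such step plus iteration suffices, and the cases $i=1$ (spine stays on $\varnothing$) and $i\ge 2$ (spine moves) are handled uniformly in one display. Your induction on $|u|$ instead separates these: the base case $u=\varnothing$ bundles together all branching events at which the spine stays, and the inductive step decomposes at the \emph{first spine-departure} time $S$. Both routes are valid and rely on the same strong Markov property at a stopping time; the paper's is marginally cleaner because it avoids tracking the spine's residence on $\varnothing$ across several $i=1$ events before reaching $S$, whereas your version makes the role of the generation index $|u|$ more explicit.
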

  \begin{proof}
    We verify that the two processes have the same decomposition at the first branching time;
    since both satisfy a branching property, this is sufficient.

    We start with $(\bar \Zz,U)$ under $\lQn$. Let $T$ denote the time of the first branching event, that is,
    \[ T = \inf\{ t\ge 0 : \card \bar \Zz(t) \ge 2 \}, \]
    where $\card\bar\Zz(t) = \bar\Zz(t)(\mathbf{X})$ is the number of atoms in $\bar\Zz(t)$.
    From the construction of the
    truncated processes, we know that
    under $\lPn$, $T$ has an exponential distribution with rate $\lambda_{b_n} = \nu^{(b_n)}(\Pp\setminus\Pp_1)$.
    The point measure $\bar \Zz(T)$ has a countable number of atoms; let $(u^{(j)})_{j\ge 1}$
    be their labels, such that $u^{(1)} = \varnothing$ and $(u^{(j)})_{j\ge 2}$ is lexicographically ordered;
    in particular, this implies that $\Zz_{u^{(j)}}(T) \eqd \Zz_{\varnothing}(T-) + \log p_j$, where $\pp$
    is sampled from $\nu^{(b_n)}\rvert_{\Pp\setminus\Pp_1}/\lambda_{b_n}$.
    Furthermore, the translates $\Zz \circ \theta_{u^{(j)},T}$ are independent of each other and
    of $\FFs_T$, where
    we recall that $\FFs_t = \sigma( \bar\Zz(s), U_s; s\le t)$, for $t \ge 0$.
    Additionally, $(\Zz_\varnothing(s),s< T)$ is independent
    of $T$ and $\pp$, and has the law of a L\'evy process with Laplace exponent
    $\Psi^{(b_n)}$ killed at an independent exponential time of rate $\lambda_{b_n}$.
    
    All of these facts add up to the following computation, in which $F_j$ is a $\FFs_{t}$-measurable
    functional, $G_j$ is a measurable function of $\RR$ and $J$ is a measurable functional on the path space;
    and $u \in \tree$.
    Let $i$ be such that $u = u^{(i)}v$, with $i=1$ only if $u^{(j)}\not\prec u$ for all $j\ge 2$,
    and as a shorthand denote $\Delta\Zz_{u^{(j)}}(T) = \Zz_{u^{(j)}}(T) - \Zz_{\varnothing}(T-)$.
    \begin{eqnarr*}
      \eqnarrLHS{
        \lQn[ J(\Zz_\varnothing(s),s<T) \prod_{j\ge 1} F_j \circ \theta_{u^{(j)},T}
        \, G_j(\Delta \Zz_{u^{(j)}}(T)) ; U_{T+t} = u ]
      }
      &=& \lEn\mbiggl[ 
      e^{-T\kappa^{(b_n)}(\omega)} e^{\omega \Zz_{\varnothing}(T-)} J(\Zz_{\varnothing}(s),s<T)
      \Bigl( \prod_{j\ge 1} G_j(\Delta \Zz_{u^{(j)}}(T)) \Bigr) e^{\omega \Delta \Zz_{u^{(i)}}(T)} \\
      && \quad {} \cdot \lEn\mBigl[
      e^{-t\kappa^{(b_n)}(\omega)}\Bigl(\prod_{j\ge 1} F_j\circ \theta_{u^{(j)},T}\Bigr)
      \, e^{\omega(\Zz_u(T+t)-\Zz_{u^{(i)}}(T))}
      \mm\vert \FFs_T 
      \mr]
      \mr]
      \\
      &=&
      \int_0^\infty \dd r \, \exp(-\mu_{b_n}r) \sE[  J(\chi^{(b_n)}_{\omega}(s),s< r)] \\
      && \quad {} \cdot 
      \int_{\Pp\setminus\Pp_1} \nu^{(b_n)}(\dd \pp)\,  p_i^\omega \lEQn[F_i; U_t = v] \prod_{j\ge 1} G_j(\log p_j)  \prod_{j\ne i} \lEn[F_j],
    \end{eqnarr*}
    where in the last line we define
    \[ \mu_{b_n} \coloneqq \lambda_{b_n} + \kappa^{(b_n)}(\omega) - \Psi^{(b_n)}(\omega)
      = \int_{\Pp\setminus\Pp_1} \sum_{i\ge 1} p_i^\omega \, \nu^{(b_n)}(\dd \pp) ,
    \]
    and $\chi^{(b_n)}_{\omega}$, defined under a probability measure $\sP$, is a Lévy process whose
    Laplace exponent is an Esscher transform of
    $\Psi^{(b_n)}$, namely $\Ess_{\omega} \Psi^{(b_n)} \coloneqq \Psi^{(b_n)}(\omega + \cdot) - \Psi^{(b_n)}(\omega)$.

    We now turn to the process $(\bar\Yy,V)$, again under $\lQn$.
    We again define the branching time,
    \[ T = \inf\{ t\ge 0 : \card \bar \Yy(t) \ge 2 \}
      = \inf\{t \ge 0: N(\{t\}\times \mathcal{A}) > 0 \}
      ,
    \]
    where $\mathcal{A} =  (0,1) \times \NN \times (\Pp\setminus\Pp_1)$;
    that is, $T$ is the first time that a jump of $\xi$ is accompanied by immigration.
    We consider the quantity
    \begin{equation}\label{e:bn-functional}
        \lEQtn \mbigl[ J(\Yy_{\varnothing}(s), s < T) 
        \prod_{j\ge 1} F_j \circ \theta_{u^{(j)},T}
        \, G_j(\Delta\Yy_{u^{(j)}}(T)); V_{T+t} = u \mr],
    \end{equation}
    where $F_j, G_j, J$ are measurable functionals as above.
    
    Observe that, under $\lQn$, $N$ is a Poisson random measure with intensity
    $\eta^{(b_n)}$ as defined in \eqref{e:eta-n}.
    Now, by the definition of $T$ and standard properties of Poisson random
    measures \cite[\S O.5]{Ber-Levy}, we know that
    $T$ has an exponential distribution with rate
    \[ \int_{[0,1]\times\mathcal{A}} \eta^{(b_n)}(\dd s,\dd y, \dd i, \dd \pp)
      = \int_{\Pp\setminus\Pp_1} \sum_{i\ge 1} p_i^\omega \, \nu^{(b_n)}(\dd \pp)
      \eqqcolon \mu_{b_n} .
    \]
    In fact, we can say more: the restriction $N\rvert_{[0,T)\times (0,1)\times\NN\times \Pp}$
    has same law as the restriction
    $N\rvert_{[0,\tau)\times\mathcal{A}^c}$,
    where $\tau$ is an exponentially-distributed random variable with rate $\mu_{b_n}$
    which is independent of $N$,
    and $\mathcal{A}^c = (0,1)\times\NN\times\Pp_1$.
    This has implications for the process $(\Yy_{\varnothing}(s),s<T)$
    which, importantly, is the same as the spine process $\xi$ on the 
    time interval in question;
    it remains a L\'evy process with Gaussian coefficient $\sigma$,
    but has two changes: first, it is killed independently at rate $\mu_{b_n}$.
    Second, the law of its jump measure, which we recall is the pushforward of
    $N(\dd s, \dd y, \NN,\Pp)$ by the map $(s,y)\mapsto (s,\log y)$,
    is altered because the law of $N$ is altered.
    Working with the L\'evy--It\^o decomposition, we see that $(\Yy_{\varnothing}(s),s<T)$
    has Laplace exponent given by
    \begin{eqnarr*}
      \eqnarrLHS{\frac{1}{2}\sigma^2 q^2 + c_{b_n,\omega}q + \int_{[0,1]\times(0,1)\times\{1\}\times\Pp_1} \mbigl( y^q-1-ql(y) \mr) \, \eta^{(b_n)}(\dd s,\dd y, \dd i, \dd \pp) - \mu_{b_n}}
      &=& \frac{1}{2}\sigma^2 q^2 + c_{b_n,\omega}q + \int_{\Pp_1} \mbigl( p_1^q-1-ql(p_1) \mr) p_1^\omega\, \nu^{(b_n)}(\dd\pp) - \mu_{b_n}
      \\
      &=& \Ess_\omega\Psi^{(b_n)}(q) - \mu_{b_n}, \qquad q \ge 0,
      \IEEEyesnumber\label{e:fb-n:le}
    \end{eqnarr*}
    where
    \begin{eqnarr*}
      c_{b_n,\omega}
      &=&
      a + \omega\sigma^2 + \int_{\Pp} (1-p_1) \, \nu^{(b_n)}(\dd\pp)
      + \int_{\Pp_1} p_1^\omega l(p_1)\, \nu^{(b_n)}(\dd\pp) .
    \end{eqnarr*}
    Note that the centre $c_{b_n,\omega}$ differs from the centre of $\xi$ due to
    the change in compensation of the small jumps.
    It follows that $(\Yy_{\varnothing}(s),s<T)$ 
    has the law of $\chi^{(b_n)}_{\omega}$ killed at an independent
    exponential time with rate $\mu_{b_n}$.
    
    Considering the particles born at time $T$, define the children $(u^{(j)})_{j\ge 1}$
    of $\Yy_\varnothing$ as for the previous part of the proof,
    and assume that again $u = u^{(i)}v$, with the convention that $i = 1$ only if $u^{(j)}\not\prec u$
    for all $j\ge 2$.
    Using again the properties of Poisson random measures, the atom $(T,y,k,\pp)$
    of $N$ appearing at time $T$ is such
    that $(y,k,\pp)$ has distribution $\frac{\eta^{(b_n)}([0,1],\cdot)\rvert_{\mathcal{A}}}{\mu_{b_n}}$,
    and we are further restricted in \eqref{e:bn-functional}
    to the event $V_{T+t}=u$, which implies that here
    we are restricted to the event $\{k=i\}$.
    Finally, from the construction of the decorated spine process,
    we know that each child $u^{(j)}$ is initially positioned at $\Yy_\varnothing(T-)+\log p_j$,
    and
    that the translate $\Yy \circ \theta_{u^{(i)},T}$ has the law of $\Yy$ under $\lQtn$,
    while the translates $\Yy \circ \theta_{u^{(j)},T}$ are independent of one another
    and have the law of $\Zz$ under $\lP^{(b_n)}$.
    
    The discussion above essentially proves the required decomposition, but for clarity we
    provide the following calculation, in which $J, F_j, G_j$ are measurable functionals as above.
    \begin{eqnarr*}
      \eqnarrLHS{
        \lEQtn \mbigl[ J(\Yy_{\varnothing}(s), s < T) 
        \prod_{j\ge 1} F_j \circ \theta_{u^{(j)},T} G_j(\Delta\Yy_{u^{(j)}}(T)); V_{T+t} = u \mr]
      }
      &=&
      \int_0^{\infty} \dd r \,
      \mu_{b_n} \exp(-\mu_{b_n}r) \sE[ J(\chi^{(b_n)}_{\omega}(s), s < r) ] \\
      &&  {} \cdot \int_{(0,1)\times(\Pp\setminus \Pp_1)}
      \frac{1}{\mu_{b_n}}\eta^{(b_n)}([0,1],\dd y, \{i\}, \dd \pp) \,
      G_i(\log y) \lEQtn[F_i; V_t = v] \prod_{j\ne i} \lEn[F_j] G_j(\log p_j)
      \\
      &=& \int_0^{\infty} \dd r \, \exp(-\mu_{b_n}r) \sE[ J(\chi^{(b_n)}_{\omega}(s), s < r) ]
      \\
      && {} \cdot
      \int_{\Pp\setminus\Pp_1} \nu^{(b_n)}(\dd\pp) \, p_i^\omega \prod_{j\ge 1} G_j(\log p_j)
      \lEQtn[F_i; V_t = v] \prod_{j\ne i} \lEn[F_j].
    \end{eqnarr*}
    This completes the proof.
  \end{proof}

Having established the result for these truncated processes, we need to
remove the truncation, and this proves the following
theorem on the spine decomposition,
which is our main result.

  \begin{thrm}[Spine decomposition]\label{t:mto}
    Under $\lQ$, $(\bar \Zz(t),U_t)_{t\ge 0}$
    is equal in law to $(\bar \Yy(t),V_t)_{t\ge 0}$.
  \end{thrm}
  \begin{proof}
   Since the processes $(\bar{\Zz},U)$ and $(\bar{\Yy},V)$ are both c\`adl\`ag,
    it is sufficient to prove that they have the same finite-dimensional distributions. 
  For simplicity, we shall only establish the convergence for one-dimensional; similar but more cumbersome arguments hold for multi-dimensional case.
  
  Fix $t\ge 0$. 
    For the measure
    $\lQn$, \eqref{e:Qn} implies 
    \[
     \lEQn[ F(\bar \Zz(t)) \Indic{U_t=u}] 
      = e^{-t\kappa^{(b_n)}(\omega)} \lE[ F(\bar \Zz^{(b_n)}(t)) e^{\omega \Zz^{(b_n)}_u(t)}]
    \]
    for continuous bounded $F$.
    Under $\lP$, $\bar{\Zz}^{(b_n)}(t)\to \bar{\Zz}(t)$ weakly on $\Mmp(\mathbf{X})$.
    Furthermore, for every $\omega\in\dom\kappa$,
    $\kappa^{(b_n)}(\omega)\upto\kappa(\omega)$.
    Hence, certainly
    the distribution of $(\bar{\Zz}(t),U_t)$
    under
    $\lQn$  converges weakly to the distribution of $(\bar{\Zz}(t),U_t)$
    under $\lQ$.
    
    We now address the convergence
    of the law of $(\bar\Yy(t),V_t)$.
    Consider first the process $\bar \Yy^{(b_n)}$, which was defined in \autoref{s:forward},
    using the notation $A_{b_n}$ and $\tau_{b_n}$.
    We may consider
    the joint process $(\bar\Yy^{(b_n)},V^{(b_n)})$, by adjoining a `cemetery' element $\partial$ 
    to the collection of labels, and defining
    $V_t^{(b_n)} = V_t\Indic{t < \tau_{b_n}} + \partial \Indic{t \ge \tau_{b_n}}$; thus,
    $V_t^{(b_n)} = \partial$ indicates that the distinguished line of descent has been
    killed before time $t$ in the process $\bar\Yy^{(b_n)}$.
    
    
    We will start by showing that, for $F$ a continuous bounded functional on $\Mmp(\mathbf{X})$
    and $u\in\tree$,
    \begin{eqnarr*}
      \lEQt[ F(\bar \Yy^{(b_n)}(t)) \Indic{V_t^{(b_n)}=u} \mid V_t^{(b_n)} \ne \partial]
      &=& \lEQtn[ F(\bar \Yy(t)) \Indic{V_t = u}]  \IEEEyesnumber \label{e:fb-inter1} \\
      &=& \lEQn[ F(\bar\Zz(t)) \Indic{U_t = u}]. 
    \end{eqnarr*}
    The second equality is an immediate corollary of
    \autoref{l:fb-n}, so we have only to prove the first equality.
    
    The conditioning on the left-hand side of \eqref{e:fb-inter1} is the same as conditioning
    on the event $\{ t < \tau_{b_n}\}$, where $\tau_{b_n}$ is the hitting time of the set $A_{b_n}$ for the
    Poisson random measure $N$.
    We notice that, given $\{t < \tau_{b_n}\}$, we have the equality
    \begin{equation*}
      \Yy^{(b_n)}(t) = \delta_{\xi(t)} 
      + \int_{[0,t]\times (0,1)\times\NN\times\Pp} {N}_{b_n}(\dd s,\dd y, \dd i, \dd \pp)
      \sum_{j\ne i} \mbigl[ (\Zz^{[s,j]})^{(b_n)} + \xi(s-) + \log p_j \mr],
    \end{equation*}
    where
    $N_{b_n}$ is defined in \eqref{e:Nb}.
    
    Using standard properties of Poisson random measures
    \cite[\S O.5]{Ber-Levy},
    we see that, under $\lQt(\cdot \mid t < \tau_{b_n})$,
    $N$ has the same law as
    its restriction
    $N\rvert_{[0,\infty)\times A_{b_n}^c}$.
    Thus, under $\lQt(\cdot\mid t<\tau_{b_n})$,
    $N_{b_n}$ is a Poisson random measure whose intensity
    is the measure $\eta^{(b_n)}$ given in \eqref{e:eta-n}.
    
    The change in the law of the measure $N$ which is induced by this
    conditioning causes a corresponding change to the jump measure of $\xi$,
    which we again recall is the pushforward of $N(\dd s, \dd y, \NN, \Pp)$ under
    $(s,y) \mapsto (s,\log y)$.
    Using the L\'evy--It\^o decomposition much as in the proof of \eqref{e:fb-n:le},
    we may show
    that under $\lQt(\cdot \mid t < \tau_{b_n})$,
    $(\xi,N_{b_n})$ has the same law as $(\xi,N)$ does under $\lQtn$.
    Finally, $\bar{\Yy}^{(b_n)}$ is measurable with respect to
    $N_{b_n}$ and $\xi$, and the same is true of $V_t^{(b_n)}$
    on the event $\{t<\tau_{b_n}\}$.
    This completes the proof of \eqref{e:fb-inter1}.
    
    We now need to take $n\to\infty$. The right-hand side of \eqref{e:fb-inter1}
    converges to $\lEQ[F(\bar\Zz(t)) \Indic{U_t = u}]$, as discussed 
    at the beginning of the proof.
    The left-hand side of \eqref{e:fb-inter1} is equal to
    \begin{equation}\relax\label{e:fb-inter2}
      \frac{\lEQt\mbigl[ F(\bar{\Yy}^{(b_n)}(t)) \Indic{V^{(b_n)}_t = u}\mr]}%
      {\lQt(V_t^{(b_n)}\ne \partial)}.
    \end{equation}
    For every $t\ge 0$ and every realisation of the process,
    $\{V_t^{(b_n)}\ne \partial\}$ holds for
    large enough $n$; moreover,
    by \autoref{l:cv-Ybn} we have $\bar{\Yy}^{(b_n)}(t)\to \bar{\Yy}(t)$ in probability,
    and hence (extracting a subsequence if necessary) also almost surely.
    It follows from the dominated convergence
    theorem that \eqref{e:fb-inter2} converges to
    $\lQt[F(\bar \Yy(t)) \Indic{V_t = u}]$. This completes the proof.
  \end{proof}
  
  \begin{rem}
  \fakephantomsection\label{r:mto}\label{r:BM}
    \begin{enumerate}
      \item\label{i:mto}
      We stress that a version of this theorem has been proved, by \citet{BBCK-maps},
      for the case of binary branching ($\nu(\dd\pp)$ being supported by those $\pp$
      such that $p_3 = 0$) under the condition that $\kappa(\omega) = 0$,
      though their description of the decomposition differs somewhat from ours due to their
      view of the genealogy.
      
      \item \label{i:BM}
      After the initial appearance of this work,
      \citet[Lemma~2.3]{BM-mcbLp} gave a version of this theorem, 
      albeit without a full genealogy in terms of labels,
      for branching L\'evy processes \cite{BM-bLp} in which upward jumps of the particle locations are permitted.
  		Using methods in this current work, it should be possible to establish the 
  		spine decomposition for the labelled process under the same assumptions
  		as \cite{BM-mcbLp};
  		recall also \autoref{r:misc}. 
    \end{enumerate}
  \end{rem}
  
  The theorem above establishes a `full many-to-one theorem' in
  	the language of \cite{HH-spine}, and we have as an immediate corollary the following useful expression for certain functionals of $\Zz$: 
  \begin{cor}[Many-to-one formula]\label{c:mto}
    For $t\ge 0$ and any non-negative $\mathcal{F}_t$-measurable functional $f$,
    \[ \lE\mBigl[\sum_{u\in \tree_t} f(\Zz_u(s),s\le t) \mr] = e^{t\kappa(\omega)} \sE[ f(\xi(s), s\le t) e^{-\omega\xi(t)} ]  \]
    where $\xi$ under the measure $\sP$ is a L\'evy process with Laplace exponent $\Ess_\omega\kappa$.
  \end{cor}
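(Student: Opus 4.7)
The plan is to derive this as an essentially immediate consequence of \autoref{t:mto}, by projecting the spine decomposition onto the trajectory of the distinguished particle and then unfolding the change of measure \eqref{e:lQ'}.

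First I would observe that, by the construction in \autoref{s:forward}, the spine particle in the decorated process carries precisely the trajectory $\xi$; that is, $\Yy_{V_t}(t) = \xi(t)$ under $\lQt$, where $\xi$ has Laplace exponent $\Ess_\omega\kappa$. Combining this with \autoref{t:mto} yields that under $\lQ$, the marginal law of $\Zz_{U_t}(t)$ coincides with the law of $\xi(t)$ under $\sP$. In particular, for any non-negative Borel function $g\from\RR\to[0,\infty)$,
\[
  \lEQ\mbigl[ g(\Zz_{U_t}(t)) \mr] = \sE[g(\xi(t))].
\]

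Next I would compute the left-hand side directly from the definition of $\lQ$ in \eqref{e:lQ'}, applied to the functional $\hat{A} = \bigcup_{u\in\tree}(\{\Zz_u(t)\in\cdot\,\}\cap\{U_t=u\})$. A standard monotone-class argument then gives
\[
  \lEQ\mbigl[ g(\Zz_{U_t}(t)) \mr]
  = e^{-t\kappa(\omega)} \lE\mBigl[ \sum_{u\in\tree_t} g(\Zz_u(t))\, e^{\omega \Zz_u(t)} \mr].
\]
Equating the two expressions yields
\[
  e^{t\kappa(\omega)} \sE[g(\xi(t))]
  = \lE\mBigl[ \sum_{u\in\tree_t} g(\Zz_u(t))\, e^{\omega \Zz_u(t)} \mr].
\]
Finally, substituting $g(x) = f(x)e^{-\omega x}$ produces the claimed identity.

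There is no real obstacle here: all the hard work has been done in \autoref{t:mto}, and the only point requiring mild care is justifying the first identity $\lEQ[g(\Zz_{U_t}(t))] = \sE[g(\xi(t))]$ for general non-negative Borel $g$, which follows by approximating $g$ from below by bounded continuous functions and applying monotone convergence on both sides. The role of $\omega \in \dom\kappa$ is only to ensure that the martingale $W(\omega,\cdot)$ is well defined and that $\kappa(\omega)$ is finite.
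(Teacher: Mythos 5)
Your proposal is correct and takes the same route the paper implicitly intends: the corollary is stated as an immediate consequence of Theorem~\ref{t:mto}, and your argument — that $\Zz_{U_t}(t)$ under $\lQ$ is distributed as $\xi(t)$ under $\sP$, combined with unfolding \eqref{e:lQ'} and substituting $g(x)=f(x)e^{-\omega x}$ — is exactly the natural way to fill in that gap. The only minor slip is a matter of bookkeeping: you attribute the need for a monotone-class/monotone-convergence argument to the identity $\lEQ[g(\Zz_{U_t}(t))]=\sE[g(\xi(t))]$, but that step holds for all non-negative Borel $g$ simply by equality in distribution; the approximation is really needed to pass from indicator events in \eqref{e:lQ'} to general non-negative Borel $g$ in the identity $\lEQ[g(\Zz_{U_t}(t))]=e^{-t\kappa(\omega)}\lE\bigl[\sum_{u\in\tree_t}g(\Zz_u(t))e^{\omega\Zz_u(t)}\bigr]$. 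This does not affect the correctness of the overall argument.
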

  We also point out the following consequence for the process $\bar\Yy$. Recall
  that $(\bar{\Yy},V)$ is Markov; this result says the same is true even if
  we forget $V$.
  \begin{cor}
    $\bar \Yy$ is a Markov process under $\lQ$.
  \end{cor}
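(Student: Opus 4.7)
The plan is to derive the Markov property of $\bar\Yy$ under $\lQ$ by transferring it from $\bar\Zz$, using the equality in law provided by \autoref{t:mto}.

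First I would verify that $\bar\Zz$ (without the label $U$) is itself Markov under $\lQ$. Under $\lP$, this is the content of \autoref{l:Zbar-bp}. The Radon--Nikodym derivative $W(\omega,t)$ defining $\lQ$ on $\FF_t$ is measurable with respect to $\bar\Zz(t)$ alone (even measurable with respect to the atoms of $\bar\Zz(t)$ projected on $\RR$). By the standard argument for Markov processes under an equivalent change of measure whose density at time $t$ is a function of the process at time $t$---applied via Bayes' formula---the Markov property is preserved. This fact was already noted in the paragraph following \eqref{e:lQ-W}, so I would just invoke it.

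Next I would apply \autoref{t:mto}: the joint processes $(\bar\Zz(t),U_t)_{t\ge 0}$ and $(\bar\Yy(t),V_t)_{t\ge 0}$ have the same law under $\lQ$ on $D([0,\infty),\Mmp(\mathbf{X})\times \tree)$. Projecting onto the first coordinate yields that the $\Mmp(\mathbf{X})$-valued càdlàg processes $(\bar\Zz(t))_{t\ge 0}$ and $(\bar\Yy(t))_{t\ge 0}$ have the same law. Since Markovianity is a property of the finite-dimensional conditional distributions of the process---equivalently, for any $0\le s\le t$ and bounded continuous $F$, the identity $\lEQ[F(\bar\Zz(t))\mid \FF_s]$ is $\sigma(\bar\Zz(s))$-measurable transfers verbatim to $\bar\Yy$---this concludes the argument.

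There is essentially no obstacle beyond \autoref{t:mto} itself; the corollary is really a direct consequence of that theorem together with the preservation of Markovianity under $\lQ$ by the tilt $W(\omega,\cdot)$. The only step deserving a moment of care is the first one, since the analogous statement for the joint process $(\bar\Yy,V)$ would not in general imply the Markov property after projection---here it is crucial that the projection be pushed back via the equality in law, rather than by forgetting $V$ from the forward construction directly.
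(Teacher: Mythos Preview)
Your argument is correct and matches the paper's own proof essentially line for line: first observe that $\bar\Zz$ under $\lQ$ is Markov because it arises from the $\lP$-Markov process via the martingale change of measure by $W(\omega,\cdot)$, and then transfer this to $\bar\Yy$ using the equality in law from \autoref{t:mto}. Your closing remark, that one must pass through the equality in law rather than simply forget $V$ from the forward construction, is a useful clarification that the paper leaves implicit.
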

  \begin{proof}
   $\bar\Zz$ is defined (without the distinguished
   particle $U$)
   by a change of measure of a Markov process with respect to the martingale
   $\mg{\omega}{\cdot}$, and is therefore a Markov process in its own right.
   $\bar\Yy$ is equal in distribution to $\bar\Zz$ under $\lQ$, and this
   completes the proof.
  \end{proof}

 \section{The derivative martingale}
  \label{s:dm} 
For every $\omega\in (\dom \kappa)^{\circ}$, the interior of $\dom \kappa$,  let $\partial W(\omega,\cdot)$ denote the derivative martingale, given by
  \[ \partial W(\omega,t) = e^{-t\kappa(\omega)}
    \sum_{u\in \tree_t} \big(\Zz_u(t) - t\kappa'(\omega)\big) e^{\omega \Zz_u(t)}, \qquad t \ge 0 . \]
  Our purpose is to study the asymptotic properties of this martingale.


Before stating our main result, Theorem~\ref{t:DerMart}, let us first distinguish two regimes of $\omega$.
By the convexity of $\kappa$, we observe that the function $q\mapsto q \kappa'(q) -\kappa(q)$ is increasing on $(\dom \kappa)^{\circ}$ and has at most one sign change. From now on, we assume that  
 \begin{equation}\label{H}
   \text{ there exists (a unique) } \ws>0, \text{ such that } \ws \in (\dom \kappa)^{\circ} ~\text{and}~\ws\kappa'(\ws) -\kappa(\ws)=0. \tag*{(H)}
  \end{equation}
The value $\ws$ has proved to be critical for the study of the uniform integrability of  the exponential additive martingale $\MA(\omega, \cdot)$; see \cite{Dadoun:agf, BM-mcbLp}. 
We point out that the assumption \ref{H} entails
that $\kappa(0)\in (0,+\infty]$, so the \emph{non-extinction}
event has strictly positive probability:
 \[ \lP\mbigl(\#\Zz(t) >0 \text{ for all }t\ge 0 \mr)\in (0,1],\]
where $\#\Zz(t) \coloneqq \Zz(t)(\RR)$ denotes the number of atoms at time $t$.
Write $\lP^*$ for the probability measure $\lP$ conditional on non-extinction.
We recall our standing assumption $\nu(\{\mathbf 0\}) = 0$,
implying that particles are never killed;
this in fact implies that non-extinction occurs $\lP$-almost surely,
and so in fact $\lP^* = \lP$ for us. However, we retain the notation $\lP^*$
in order to make clear how our results would look without our assumption.

\skippar
We now state the main result of this section. 
\begin{thrm}\label{t:DerMart}  Suppose that \ref{H} holds. 
	\begin{enumerate}
		\item Let $\omega \geq \ws$, then the derivative martingale $\MD(\omega, t)$ converges $\lP$-almost surely to a finite non-positive limit $\MD(\omega,\infty)$ as $t\to \infty$. 
		\item Let $\omega > \ws$, then $\MD(\omega,\infty) = 0$ holds $\lP$-almost surely, 
		\item For $\omega=\ws$, there is $\lE[\MD(\ws,\infty)] = -\infty$, and $\MD(\ws,\infty) <0$ holds $\lP^*$-almost surely.
	\end{enumerate}
\end{thrm}

In \cite[Corollary 2.10(b)]{Dadoun:agf}, Dadoun has shown
the $\lP^*$-almost sure negativity of the random variable $\partial W(\bar{\omega},\infty)$,
identified there as the almost sure limit of the discrete martingale
$(\partial W(\bar{\omega},n), n = 0,1,\dotsc)$ Our theorem improves upon
\cite{Dadoun:agf} by proving convergence of the continuous-time martingale
and finding the expected value of the limit random variable.
Furthermore, we do not require condition (2.7) of \cite{Dadoun:agf}.

The limit $\partial W(\bar{\omega},\infty)$ has an intimate connection with the asymptotic behaviour of the largest fragment and Seneta--Heyde norming for $\MA(\ws, \cdot)$; see \cite[Corollary~2.10 and Remark~2.11]{Dadoun:agf}. 
Analogues of \autoref{t:DerMart} were proved for multitype branching random walks by \citet{BK-mc}, for branching Brownian motion by \citet{Kyp-FKPP} and for pure fragmentation processes by \citet{BR-disc}. A thorough exposition of the theory for branching random walks is given in the monograph of \citet{Shi-BRW}. 

The common approach of the works described above is 
a technique based upon stopping particles moving at a certain speed,
and we stress that the spine decomposition plays a central role in these arguments.
Our proof, which is primarily modelled
on that of \citet{BR-disc}, is postponed to \autoref{s:proofD}; in the coming \autoref{s:sm}, we prepare for it by investigating a related family of martingales.

  \begin{rem}
  		   For generic branching L\'evy processes  \cite{BM-bLp} in which upward jumps of the particle locations are permitted, the same arguments apply to prove (i) and (ii) of \autoref{t:DerMart}, but not (iii).  
  		    For $\omega=\bar\omega$, we expect that an additional assumption in terms of the dislocation measure $\nu$ is needed to make the limit non-trivial. 
  		  In the case of branching random walks \cite{Chen-dm} and branching Brownian motion \cite{RY-dm}, optimal moment conditions have been found, and the martingale limits are proven to be zero when these conditions do not hold. 
  \end{rem}


\subsection{The stopped martingales}\label{s:sm}
In this subsection we fix $a>0$ and $\omega \in (\dom \kappa)^{\circ}$, and define a process
\[
  \MD_a(\omega,t):= \sum_{u\in \tree_t} \mbigl( a +t \kappa'(\omega) -\Zz_u(t)  \mr) e^{-t \kappa(\omega) + \omega\Zz_u(t)}\Indic{ a+r \kappa'(\omega) - \Zz_{ \Anc(r,u)}(r)>0 \text{ for } r\le t},\quad t\ge 0,
\]
where $\Anc(r,u)$ denotes the ancestor of $u$ at time $r$ as in \autoref{s:branching}. It is clear that  $\MD_a(\omega,t)$ is always
non-negative.
 We use this to define a new measure on $\FF_\infty$ by
  \[ \lQD (A) := \frac{1}{a}\lE[\partial W_a(\omega,t) \Ind_A ] , \qquad t \ge 0, A \in \FF_t, \]
  and extend it to $\hat\FF_\infty$ by
  \begin{multline}\label{e:lQD} 
  \lQD (A; U_t = u) \\
  \coloneqq \frac{1}{a}\lE\mbigl[\mbigl(a + t\kappa'(\omega) - \Zz_u(t)\mr) e^{-t \kappa(\omega)+\omega \Zz_u(t)}
    \Indic{a+s\kappa'(\omega)-\Zz_{\Anc(r;u)}(r) > 0 \text{ for }r\le t} \Ind_A  \mr]. \end{multline}

  To justify that the measure $\lQD$ is well-defined and does not depend on the choice of $t$, we consider the interpretation of $\lQD$ as having a density with respect to $\lQ$ on $\hat\FF_\infty$.
  Recall that under $\lQ$, we have a process $\bar\Zz$ together with a spine label $U$, and the spine $(\Zz_{U_t}(t), t\geq 0)$ is a L\'evy process with Laplace exponent $\Ess_\omega\kappa$. Write 
\begin{equation}\label{e:lambda}
   \lambda(t) := a + t\kappa'(\omega) - \Zz_{U_t}(t), \qquad t\ge 0, 
\end{equation}
  then it follows that $\lambda$ under $\lQ$ is a L\'evy process
  with respect
  to the filtration $(\hat \FF_t)_{t\ge 0}$, started at $a$. The process $\lambda$
  is spectrally positive, in the sense that it has only positive jumps, and it
  has Laplace exponent $\kappa'(\omega)q - \Ess_\omega\kappa(q)$,
  meaning that $\lEQ[e^{-q(\lambda(t)-a)}] = e^{-t(\kappa'(\omega)q-\Ess_\omega\kappa(q))}$
  for $q\ge 0$. (This is a slight change in notation compared to
  \eqref{e:lk}, but it follows the usual convention for the Laplace exponent of a
  spectrally positive process.)
  In particular, $\lEQ [ \lambda(t)] = a$ for every $t\geq 0$, which implies that $\lambda$ is a $\lQ$-martingale with respect to $(\hat\FF_t)_{t\ge 0}$.
  Let
  \[ \zeta = \inf\{ t\ge 0: \lambda(t) < 0 \}, \] 
  then it follows from \autoref{c:mto} that
   \begin{equation}\label{e:cm-lambda}
	\lQD (A) =a^{-1}\lEQ\mBigl[ \lambda(t) \Indic{t < \zeta} \Ind_{A} \mr] , \qquad t \ge 0, A \in \hat\FF_t. \end{equation}
  Using the fact that the stopped martingale $(\lambda(t\wedge \zeta)= \lambda(t) \Indic{t < \zeta}, t\geq 0) $ remains a $\lQ$-martingale (see \cite[Corollary II.3.6]{RY-cmbm}), we justify the previous definition of $\lQD$ as a consistent change of measure.
  As a consequence, $\partial W_a (\omega, \cdot)$ is a non-negative $\lP$-martingale, and therefore converges $\lP$-almost surely to a limit $\MD_a(\omega, \infty)$ as $t\to \infty$.

  \skippar
  The main object of this subsection is to establish the following result, which will be crucially used in the proof of \autoref{t:DerMart}.
\begin{prop}\label{l:MDa}Suppose that \ref{H} holds.
  \begin{enumerate}
  \item For $\omega> \ws$, we have $\lP$-almost surely $\MD_a(\omega,\infty)= 0$. 
  \item For $\omega= \ws$, the martingale $\MD_a(\ws,t)$ converges to $\MD_a(\ws,\infty)$ in $L^1(\lP)$. 
   \end{enumerate}
  \end{prop}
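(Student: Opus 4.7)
I exploit the change-of-measure formula \eqref{e:cm-lambda}: since $M_t := \MD_a(\omega,t)/a$ is a unit-mean non-negative $\lP$-martingale whose tilt produces $\lQD$, the classical dichotomy for non-negative martingales gives $\MD_a(\omega,\infty)=0$ $\lP$-a.s.\ if and only if $\MD_a(\omega,t)\to\infty$ $\lQD$-a.s., while $L^1(\lP)$-convergence is equivalent to $\limsup_t\MD_a(\omega,t)<\infty$ $\lQD$-a.s. Both parts therefore reduce to the long-time behaviour of $\MD_a(\omega,t)$ under $\lQD$.

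Under $\lQD$, by \autoref{t:mto} and \eqref{e:cm-lambda}, the process $\lambda$ of \eqref{e:lambda} is the Doob $h$-transform with $h(x)=x$ of the spectrally positive Lévy $\lQ$-martingale, that is, the corresponding Lévy process conditioned to stay positive; in particular $\lambda(t)\to\infty$ $\lQD$-a.s.\ by classical theory (see Bertoin, \emph{Lévy Processes}, Ch.~VII). Since $\omega\in(\dom\kappa)^\circ$ gives $\kappa''(\omega)<\infty$, the process $\lambda$ has finite variance under $\lQ$, and hence $\lambda(t)=o(t)$ $\lQD$-a.s. Splitting $\MD_a(\omega,t)$ into spine and immigration contributions and using $\Zz_{U_t}(t)=a+t\kappa'(\omega)-\lambda(t)$, a direct computation gives
\[\MD_a^{\mathrm{sp}}(\omega,t)=e^{\omega a}\,\lambda(t)\,e^{-\omega\lambda(t)}\,e^{t(\omega\kappa'(\omega)-\kappa(\omega))},\]
while the $j$-th non-spine child at a spine-branching time $s_k$ with mass partition $\pp_k$ contributes $e^{\omega a}\,p_{k,j}^{\omega}\,e^{-\omega\lambda(s_k-)}\,e^{s_k(\omega\kappa'(\omega)-\kappa(\omega))}\,\MD^{(k,j)}_{\tilde a(s_k)}(\omega,t-s_k)$, where $\tilde a(s_k):=\lambda(s_k-)-\log p_{k,j}>0$ and $\MD^{(k,j)}_{\tilde a}(\omega,\cdot)$ is an independent stopped derivative $\lP$-martingale at initial level $\tilde a$.

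Part (i) follows quickly: for $\omega>\ws$, strict convexity of $\kappa$ combined with \ref{H} yields $\omega\kappa'(\omega)-\kappa(\omega)>0$, and together with $\lambda(t)=o(t)$ this sends $\MD_a^{\mathrm{sp}}(\omega,t)\to\infty$ $\lQD$-a.s.; since $\MD_a(\omega,t)\geq\MD_a^{\mathrm{sp}}(\omega,t)$, we conclude $\MD_a(\omega,\infty)=0$ $\lP$-a.s. Part (ii) is the delicate case: for $\omega=\ws$, the exponential factor collapses to $1$, so $\MD_a^{\mathrm{sp}}(\ws,t)=e^{\ws a}\lambda(t)e^{-\ws\lambda(t)}\to 0$ $\lQD$-a.s. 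The main obstacle is to show that the immigration contribution
\[\MD_a^{\mathrm{imm}}(\ws,\infty)=\sum_k\sum_{j\ne i_k}e^{\ws a}\,p_{k,j}^{\ws}\,e^{-\ws\lambda(s_k-)}\,\MD^{(k,j)}_{\tilde a(s_k)}(\ws,\infty)\]
is $\lQD$-a.s.\ finite. Conditioning on the spine $\sigma$-algebra and using the Fatou bound $\lE\MD^{(k,j)}_{\tilde a(s_k)}(\ws,\infty)\leq\tilde a(s_k)$ reduces this to the $\lQD$-a.s.\ convergence of the series $\sum_k e^{-\ws\lambda(s_k-)}\bigl(\lambda(s_k-)-\sum_{j\ne i_k} p_{k,j}^{\ws}\log p_{k,j}\bigr)$; I would establish this via the compensation formula for the immigration point measure under $\lQD$ (whose intensity is the Esscher-transformed measure $\sum_{i\ge 1}p_i^{\ws}\nu(\dd\pp)$ further reweighted by the Doob $h$-transform of $\lambda$), combined with the rapid growth $\lambda(t)\to\infty$ and the integrability furnished by $\ws\in(\dom\kappa)^\circ$. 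Once this finiteness is in hand, $\MD_a(\ws,t)$ is $\lQD$-a.s.\ bounded as $t\to\infty$, yielding the required uniform integrability.
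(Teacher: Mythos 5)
Your overall plan — reduce both claims to the $\lQD$-almost-sure behaviour of $\MD_a(\omega,\cdot)$, split the tilted process into spine and immigration contributions, and control each piece — is the same as the paper's, and your computation of the spine contribution
\[
\MD_a^{\mathrm{sp}}(\omega,t)=e^{\omega a}\,\lambda(t)\,e^{-\omega\lambda(t)}\,e^{t(\omega\kappa'(\omega)-\kappa(\omega))}
\]
is the correct one (the paper's displayed lower bound in the proof of part~(i) has a sign slip, $+\omega\lambda(t)$ instead of $-\omega\lambda(t)$). For part~(i) your argument then rightly needs more than $\inf_t\lambda(t)>0$: the factor $e^{-\omega\lambda(t)}$ must not eat the exponential growth, which your appeal to $\lambda(t)=o(t)$ supplies. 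Modulo a citation for $\lambda(t)=o(t)$ under $\lQD$ (which holds for a centred finite-variance Lévy process conditioned to stay positive, but isn't entirely a one-liner), part~(i) is correct and complete.

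Part~(ii) is where a genuine gap remains. You correctly identify that the decisive step is to show $\lQD$-a.s.\ finiteness of the immigration contribution, conditionally on the spine, and that the relevant conditional quantity (after fixing the dropped $p_{k,j}^\ws$ factor) is
\[
\sum_k e^{-\ws\lambda(s_k-)}\sum_{j\ne i_k} p_{k,j}^{\ws}\bigl(\lambda(s_k-)-\log p_{k,j}\bigr).
\]
But your proposed route — "establish this via the compensation formula for the immigration point measure under $\lQD$" — does not work directly. The $\lQD$-compensator of $N$ (see the paper's \autoref{l:cN}) yields, for the leading term of the above sum, an expectation that roughly factors into a time integral of $\lEQD\bigl[(\lambda(r)+1+\lambda(r)^{-1})e^{-\ws\lambda(r)}\bigr]$ (which converges by \autoref{l:lambda-int}) multiplied by a $\nu$-moment of order $\int_{\Pp}\bigl(\sum_i p_i^\ws\bigr)\bigl(\sum_{j\ne i}p_j^\ws\bigr)\,\nu(\dd\pp)$; this involves $\int_{\Pp}\bigl(\sum_{i\ge 2}p_i^\ws\bigr)^2\nu(\dd\pp)$, which is \emph{not} guaranteed finite when $\ws<1$ (only the first moment $\int\sum_{i\ge 2}p_i^\ws\,\nu<\infty$ follows from $\ws\in(\dom\kappa)^\circ$). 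The paper avoids this by splitting the immigration events into a "small-cluster" regime $\{e^{\theta\lambda(r-)}>X\}$ with $X=\sum_{i\ge 2}p_i^\ws$, where the indicator allows one to trade a power of $X$ for $e^{\theta\lambda(r-)}$ and the compensation formula then closes, and a "large-cluster" regime $\{e^{\theta\lambda(r-)}\le X\}$, where the sum is shown to be a.s.\ a \emph{finite} sum by bounding the expected number of such events. The latter bound is exactly where the $L\log L$-type moment condition \eqref{e:h-dm}, $\int_{\Pp}\bigl(\sum_{i\ge 2}p_i^\omega\bigr)\bigl(\log_+\sum_{i\ge 2}p_i^\omega\bigr)^\rho\nu(\dd\pp)<\infty$ for $\omega\in[\ws-\epsilon,\ws]$, enters; this condition is a consequence of $\ws\in(\dom\kappa)^\circ$ via Jensen, and is the genuine use of "integrability furnished by $\ws\in(\dom\kappa)^\circ$" that your sketch gestures at but does not extract. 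Without this case split and the $L\log L$ estimate, the compensation-formula integral can diverge and the argument stalls. The paper also works with $\liminf_t$ of the spine-conditional expectation rather than $\limsup_t$ of the martingale itself (invoking Lemma~4.2 of \cite{Shi-BRW}), which is slightly weaker and hence easier to verify; you could adopt the same simplification.
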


  To prove \autoref{l:MDa}, the key idea is to use the `forward' construction (\autoref{d:deco2} and \autoref{t:mto}) of $(\bar\Zz,U)$ under $\lQ$, as a L\'evy process $\xi$ with Laplace exponent $\Ess_\omega\kappa$ whose jumps are decorated with independent branching L\'evy processes with law $\lP$, each
  positioned according to the atoms of a random measure $N$.
  By a slight abuse of notation, the measure $N$ under $\lQ$
  can be seen as an
      integer-valued random measure on $[0,\infty)\times E$, with $E = (0,1)\times\NN\times\Pp$,
      and its support is a random set having the form
      $\{ (s,(e^{-\Delta \xi(s)},i_s,\pp_s)) : \Delta\xi(s)\ne 0\}$.
      Further, $N$ is Poisson with the (non-random) intensity measure $\eta$. 
 Since $\lQD$ is absolutely continuous with respect to $\lQ$
 on every $\hat\FF_t$, the process under $\lQD$ has the same structure; however, the laws of the process $\xi$ and the random measure $N$ may be different.
 
  The following pair of lemmas provides 
  more detail on the discussion above;
  we refer to \citet[\S II.1]{JS-limit} for a thorough discussion of
  random measures, and in particular the
  notion of the predictable compensator of a random measure.
  Note that hereafter, when we say \emph{predictable},
  we will always mean predictable with respect to the filtration
  $(\hat\FF_t)_{t\ge 0}$.

  \begin{lem}\label{l:clambda}
    Under $\lQD$, the process $\lambda$ defined as in \eqref{e:lambda} is a spectrally positive
    L\'evy process starting from initial value $a > 0$ with 
    Laplace exponent $q \mapsto \kappa'(\omega)q - \Ess_\omega\kappa(q)$, conditioned to
    be positive in the sense of \cite{Cha-cond,CD-cond}.
    In particular, we have that $\inf_{t\ge 0} \lambda(t) > 0$, $\lQD$-almost surely.
    \begin{proof}
      Recall that $\lambda$ is a (unconditioned) L\'evy process with the given Laplace exponent under $\lQ$. 
      In the work of \citet{CD-cond}, it is shown that conditioning the L\'evy process $\lambda$
    	 to remain
    	 positive is equivalent to performing a martingale change of measure with respect to the
    	 martingale $U_-(\lambda(t))\Indic{t<\zeta}$, where $U_-$ is the potential function
    	 of the downward ladder height subordinator.
    	 Since $\lambda$ has no negative jumps and
    	 has constant mean $a$, it follows that
    	 $U_-(x) = x \Indic{x>0}$ (see \cite[\S 6.5.2]{Kyp2} for
    	 the analogous case of processes with no positive jumps.) 
    	 Therefore, conditioning $\lambda$ to remain positive gives rise to
    	 $\lQD$ as the conditioned measure.
    	 
    	 This completes the characterisation of $\lambda$ under $\lQD$.
    	 Finally, since $\lambda$ under $\lQ$ is a centred L\'evy process with only positive jumps, the fact that the overall infimum of $\lambda$ under $\lQD$ is positive is implied by \cite[Theorem 1(a)]{CD-cond}.
   \end{proof}
  \end{lem}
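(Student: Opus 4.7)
My plan is to interpret \eqref{e:cm-lambda} as a Doob $h$-transform of the killed L\'evy process $\lambda$ with harmonic function $h(x)=x$, and then to invoke the Chaumont--Doney characterisation of L\'evy processes conditioned to stay positive.

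The first step is to rewrite \eqref{e:cm-lambda} in the form $\lQD\rvert_{\hat\FF_t} = a^{-1}\lambda(t\wedge\zeta)\,\lQ\rvert_{\hat\FF_t}$. Under $\lQ$, as noted just before the statement, $\lambda$ is a spectrally positive L\'evy process started from $a$ with Laplace exponent $\psi(q) = \kappa'(\omega)q - \Ess_\omega\kappa(q)$ satisfying $\psi'(0+)=0$, so $\lambda$ is a centred $\lQ$-martingale. Optional stopping at $\zeta$, together with the absence of downward jumps (which prevents $\lambda$ from crossing zero from above), shows that $\lambda(\cdot\wedge\zeta)$ is a nonnegative $\lQ$-martingale. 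In the language of $h$-transforms for the L\'evy process killed on entering $(-\infty,0)$, this exactly identifies the harmonic function as $h(x) = x$.

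The second step is to invoke \citet{Cha-cond, CD-cond}: for a L\'evy process started at $x>0$, conditioning to stay in $(0,\infty)$ is equivalent to the $h$-transform by the renewal function $U_-$ of the descending ladder-height subordinator. Thus it suffices to verify that $U_-(x)$ is a positive multiple of $x$ for $x>0$. Since $\lambda$ has no negative jumps, the infimum process $t \mapsto \inf_{s\le t}\lambda(s)$ has continuous paths, the descending ladder-height subordinator is purely linear, and its potential measure is a multiple of Lebesgue measure; hence $U_-(x) \propto x$, the constant being absorbed into the normalisation of the downward ladder local time. The dual computation --- for spectrally negative L\'evy processes, given explicitly in \cite[\S 6.5.2]{Kyp2} --- serves as a reference check.

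The main obstacle, I expect, will be the careful bookkeeping of normalisation conventions across the Doob transform, the ladder-height renewal function and the factor $a^{-1}$ appearing in \eqref{e:cm-lambda}. Once these align, the Chaumont--Doney framework immediately identifies the law of $\lambda$ under $\lQD$ as the conditioned process, and the strict positivity $\inf_{t\ge 0}\lambda(t)>0$ $\lQD$-a.s.\ follows from \cite[Theorem 1(a)]{CD-cond} applied to this centred spectrally positive case.
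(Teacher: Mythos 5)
Your proposal is correct and follows essentially the same path as the paper: identify \eqref{e:cm-lambda} as a Doob $h$-transform with $h(x)=x$, note that the absence of negative jumps and the zero-drift condition force the renewal function $U_-$ of the descending ladder height to be proportional to $x$ on $(0,\infty)$, apply the Chaumont--Doney characterisation to identify $\lQD$ as the law of $\lambda$ conditioned to stay positive, and invoke \cite[Theorem 1(a)]{CD-cond} for $\inf_{t\ge 0}\lambda(t)>0$. Your extra remarks about the continuity of the infimum process and the normalisation bookkeeping are sensible consistency checks but do not change the argument.
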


  \begin{lem}\label{l:cN}
    The predictable compensator of the random measure $N$ under $\lQD$ is given by
    \[ \eta'(\dd s, \dd y, \dd i, \dd \pp) \coloneqq 
      \frac{\lambda(s-) - \log y}{\lambda(s-)}
      \, \eta(\dd s, \dd y, \dd i, \dd \pp) .
    \]
   \begin{proof}

      We first point out that $\zeta$ is predictable:
      since $\lambda$ is a spectrally positive L\'evy process under $\lQ$, it can only pass
      below $0$ continuously. Thus, defining
      $T_n = \inf\{ t \ge 0: \lambda(t) < 1/n\} < \zeta$,
      we have that $\zeta = \sup_{n} T_n$, which implies in particular that $\zeta$ is predictable (by \cite[Theorem I.2.15(a)]{JS-limit}.) 
      
      Now, since $N$ is Poisson under $\lQ$, its compensator under $\lQ$ is the (non-random) intensity measure $\eta$, and moreover
      the density process for the change of measure is
      \[
        \frac{\dd \lQD}{\dd \lQ}\bigg|_{\hat\FF_s} 
        = a^{-1}\lambda(s) \Indic{s<\zeta}.
      \] 
      For any predictable random function $(s,(y,i,\pp)) \mapsto U_s(y,i,\pp)$,
      we have that
      \begin{multline*}
        \lEQ \mbiggl[
        \int_{[0,\infty)\times E} \frac{\lambda(s)}{\lambda(s-)} \Indic{s<\zeta}
        U_s(y,i,\pp) \, N(\dd s, \dd y,\dd i,\dd \pp) \mr] \\
        =
        \lEQ \mbiggl[
        \int_{[0,\infty)\times E} \frac{\lambda(s-)-\log y}{\lambda(s-)} \Indic{s<\zeta}
        U_s(y,i,\pp) \, N(\dd s, \dd y,\dd i,\dd \pp) \mr],
      \end{multline*}
      and the random function
      $(s,(y,i,\pp)) \mapsto \frac{\lambda(s-)-\log y}{\lambda(s-)} \Indic{s<\zeta}$
      is predictable. Having made these observations, the result
      follows by the Girsanov theorem for random measures
      \cite[Theorem III.3.17(b)]{JS-limit}.
      Note that under $\lQD$, $\zeta = \infty$ by \autoref{l:clambda}.
   \end{proof}
  \end{lem}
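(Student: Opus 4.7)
The plan is to apply the Girsanov theorem for random measures \cite[Theorem III.3.17(b)]{JS-limit}. Three ingredients are needed: the Radon--Nikodym density process, the predictability of $\zeta$, and a predictable function describing the change of compensator.

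First I would record that, by \eqref{e:cm-lambda} together with the fact---established just before the statement of this lemma---that $(\lambda(t\wedge\zeta))_{t\ge 0}$ is a $\lQ$-martingale, the Radon--Nikodym density process is
\[ D_s := \frac{\dd \lQD}{\dd \lQ}\Big|_{\hat\FF_s} = a^{-1}\lambda(s)\Indic{s<\zeta}. \]
Next I would verify that $\zeta$ is predictable. Under $\lQ$, $\lambda$ is a spectrally positive L\'evy process---its jumps $-\log y > 0$ at atoms of $N$ arise from the spectrally negative jumps of $\xi$---so $\lambda$ cannot enter $(-\infty,0)$ by a jump, only by continuous descent. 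Hence $\zeta$ is announced by $T_n = \inf\{t: \lambda(t) < 1/n\}$ and is therefore predictable.

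The key observation is that at every atom $(s,y,i,\pp)$ of $N$, the process $\lambda$ jumps up by $-\log y$, so $\lambda(s) = \lambda(s-) - \log y$, and consequently
\[ \frac{D_s}{D_{s-}} = \frac{\lambda(s-) - \log y}{\lambda(s-)} \qquad \text{on } \{s < \zeta\}. \]
The right-hand side, viewed as a random function of $(s,y,i,\pp)$, is predictable, and this is precisely the integrand prescribed by the Girsanov theorem. To make the argument explicit, I would take any non-negative predictable random function $U_s(y,i,\pp)$ and write
\[ \lEQ\mBigl[\int U_s \cdot a D_s \, N(\dd s, \dd y, \dd i, \dd \pp)\mr]
= \lEQ\mBigl[\int U_s \cdot a D_{s-} \cdot \frac{\lambda(s-) - \log y}{\lambda(s-)} \Indic{s<\zeta} \, N(\dd s, \dd y, \dd i, \dd \pp)\mr], \]
then compensate $N$ under $\lQ$ using $\eta$. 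Since the integrand on the right is predictable, this identifies $\frac{\lambda(s-)-\log y}{\lambda(s-)}\eta$ as the $\lQD$-compensator of $N$. Finally, because $\zeta = \infty$ $\lQD$-almost surely by \autoref{l:clambda}, the indicator on $\{s<\zeta\}$ may be dropped in the final expression.

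The main technical step is the replacement of the non-predictable factor $\lambda(s)/\lambda(s-)$ by the predictable expression $(\lambda(s-)-\log y)/\lambda(s-)$ at atoms of $N$; once this pathwise identity is in place, Girsanov for random measures delivers the conclusion.
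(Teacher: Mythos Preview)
Your proposal is correct and follows essentially the same approach as the paper: identify the density process $a^{-1}\lambda(s)\Indic{s<\zeta}$, verify that $\zeta$ is predictable via the announcing sequence $T_n=\inf\{t:\lambda(t)<1/n\}$, use the pathwise identity $\lambda(s)=\lambda(s-)-\log y$ at atoms of $N$ to replace the ratio $D_s/D_{s-}$ by a predictable function, and then invoke \cite[Theorem III.3.17(b)]{JS-limit}. The paper's proof is organised in the same way and relies on the same key observations.
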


We now need one final technical result to prepare for the
main proposition in this section.
  \begin{lem}\label{l:lambda-int}
    For every $p>0$,
    \[ \lQD \mbiggl[ \int_{0}^\infty \mBigl(\lambda(r)+1+\frac{1}{\lambda(r)}\mr) e^{-p \lambda(r)} \, \dd r \mr] < \infty . \]
    \begin{proof}
      Let $V(a, \dd y) = \lQD \mbigl[ \int_0^\infty \Indic{\lambda(r)\in \dd y} \, \dd r \mr]$,
      and $U^\dag(a,\dd y) = \lQ \mbigl[ \int_0^{\zeta} \Indic{\lambda(r)\in \dd y}\, \dd r \mr]$.
      The former is the potential of a Lévy process conditioned to stay positive, and the latter
      is that of a Lévy process killed upon going below the level $0$. Due to the $h$-transform
      connecting their semigroups, they are related by the formula
      \[ V(a,\dd y) = \frac{y}{a}U^\dag(a,\dd y). \]
      Moreover, by \cite[Corollary 8.8]{Kyp2}, we have
      \[ U^\dag(a,\dd y) = (\WW(y)-\WW(y-a)) \, \dd y , \qquad y \ge 0, \]
      where $\WW$ is the scale function of the spectrally negative Lévy process $-\lambda$,
      with the convention that $\WW(x)=0$ for $x< 0$,
      and $k >0$ is a constant.
      
      Thus, we have that
      \begin{eqnarr*}
        \lQD \mbiggl[ \int_{0}^\infty \mBigl(\lambda(r)+1+\frac{1}{\lambda(r)}\mr) e^{-p \lambda(r)} \, \dd r \mr]
        &=& \int_{[0,\infty)} \mBigl(y+1+\frac{1}{y}\mr) e^{-py} \, V(a,\dd y) \\
        &=& k \int_0^\infty \frac{y}{a}\mBigl(y+1+\frac{1}{y}\mr) e^{-py} (\WW(y)-\WW(y-a)) \, \dd y.
      \end{eqnarr*}
      
      Finally, by \cite[equation (VII.4)]{Ber-Levy}
      and the renewal theorem \cite[Theorem III.21]{Ber-Levy}, we know that
      $\WW(y)-\WW(y-a) \to ac/{m_+}$ as $y \to \infty$, where $m_+$ is the mean of the ascending
      ladder height process of $\lambda$ and $c$ is a meaningless constant.
      This implies that the integral above converges at $\infty$.
      We then note that the integrand is equivalent to $ k a^{-1} (y^2+y+1) \WW(0)$ as $y\to 0$,
      and this completes the proof.      
    \end{proof}

  \end{lem}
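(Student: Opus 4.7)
The plan is to express the expectation as an integral against the potential measure of $\lambda$ under $\lQD$, then exploit the Doob $h$-transform that identifies this conditioned potential with the killed potential of the unconditioned (centred, spectrally positive) L\'evy process.

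Define $V(a,\dd y) := \lQD\bigl[\int_0^\infty \Indic{\lambda(r)\in \dd y}\,\dd r\bigr]$, so the quantity to be controlled is $\int_{[0,\infty)} (y+1+y^{-1})e^{-py}\,V(a,\dd y)$. By \autoref{l:clambda}, $\lambda$ under $\lQD$ is the $\lQ$-process conditioned to stay positive in the sense of \cite{CD-cond}. Since this conditioning is a Doob $h$-transform relative to a harmonic function for the killed semigroup, and since $\lambda$ is centred (with mean $a$) and spectrally positive under $\lQ$, that harmonic function is simply $h(y)=y$. This yields the identification
\[ V(a,\dd y) = \frac{y}{a}\,U^{\dag}(a,\dd y), \]
where $U^{\dag}(a,\dd y) := \lQ\bigl[\int_0^\zeta \Indic{\lambda(r)\in \dd y}\,\dd r\bigr]$ is the potential of $\lambda$ killed upon first passage below $0$.

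The next step is to use fluctuation theory: since $-\lambda$ is spectrally negative under $\lQ$, the killed potential admits the well-known density representation
$U^{\dag}(a,\dd y) = \bigl(W(y)-W(y-a)\bigr)\,\dd y$ on $[0,\infty)$, where $W$ is the scale function of $-\lambda$ and $W(x)=0$ for $x<0$. Substituting, the integral in question becomes
\[ \frac{1}{a}\int_0^\infty (y^2+y+1)\,e^{-py}\,\bigl(W(y)-W(y-a)\bigr)\,\dd y. \]

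Finally, I would check integrability in the two regimes. At infinity, the renewal theorem applied to the ascending ladder height of $\lambda$ gives $W(y)-W(y-a)\to a/m_+$ for a finite constant $m_+>0$, so the integrand is asymptotic to a multiple of $y^2 e^{-py}$, which is integrable. Near zero, both $y^2+y+1$ and $W(y)-W(y-a)=W(y)$ remain bounded, so the integrand is bounded on a neighbourhood of $0$; crucially, the factor $y/a$ from the $h$-transform absorbs the $y^{-1}$ singularity in the integrand. The main obstacle is the correct identification of the harmonic function $h(y)=y$ used to condition $\lambda$: this is standard in the Chaumont--Doney framework for spectrally one-sided processes with zero mean drift, but it needs to be invoked carefully. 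Once in place, the remaining estimates are elementary.
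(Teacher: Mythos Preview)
Your proposal is correct and follows essentially the same route as the paper: rewrite the expectation against the conditioned potential $V(a,\dd y)$, use the Doob $h$-transform with $h(y)=y$ to pass to the killed potential $U^{\dag}(a,\dd y)$, express the latter via the scale function as $(W(y)-W(y-a))\,\dd y$, and then check integrability at $0$ and $\infty$ using boundedness of $W$ near the origin and the renewal theorem at infinity. The paper's proof is the same argument, with identical ingredients and in the same order.
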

  \begin{rem}
    In \cite{BR-disc}, the result
    \[ 
      \inf_{t\ge 0} \lambda(t) > 0 
      \qquad \text{and}\qquad 
      \lim_{t\to \infty} 
      \frac{\log \lambda(t)}{\log t} 
      = \frac{1}{2} \qquad\quad \text{$\lQD$-almost surely},
    \]
    stated in \cite[equation (21)]{BR-disc},
    is used. This would suffice for our purposes also.
    However, since the proof of \autoref{l:lambda-int} is not very long,
    we offer it for the sake of completeness.
  \end{rem}

\skippar
We are now in a position to prove \autoref{l:MDa}.
\begin{proof} [Proof of \autoref{l:MDa}] 
   (i)~By a fundamental result in measure theory (see e.g. \cite[Corollary~1]{Ath-change}), it suffices to prove that 
	\[
        \MD_a(\omega,\infty)  = \infty, \quad \lQD\text{-a.s.}
	\]
        For $\lambda$ defined by \eqref{e:lambda} and every $t\geq 0$, it is clear that 
	\[\MD_a(\omega,t) \geq \lambda(t) \exp \mbigl(a\omega+\omega \kappa'(\omega) t- \kappa(\omega)t + \omega\lambda(t)\mr) .\]
	As $\omega > \ws$, there is $\kappa'(\omega)\omega > \kappa(\omega)$. Moreover, we know from \autoref{l:clambda} that $\inf_{t\ge 0} \lambda(t) > 0$. The claim follows as a consequence. 
	
\bigskip	
(ii)~Let us start with a useful estimate. Under assumption \ref{H}, we can choose $\epsilon>0$  small enough such that $\ws-\epsilon\in (\dom \kappa)^{\circ}$ and that $\ws-\epsilon>0$. 
Then there is 
  \begin{equation}\label{e:h-dm}
 \int_{\Pp} \mBigl( \sum_{i\ge 2} p_i^{\omega}  \mr) \mBigl(\log_+\mbigl(\sum_{i\ge 2} p_i^{\omega}\mr) \mr)^{\rho} \, \nu(\dd \pp) <\infty, \qquad \text{for all}~ \omega\in [\ws-\epsilon, \ws], \rho\in[1,2]. 
\end{equation}
To prove \eqref{e:h-dm}, for any $\omega\in [\ws-\epsilon, \ws]$ and $\rho\in[1,2]$, we next choose $\gamma >0$ small enough, such that 
$\omega + \rho \gamma (\omega -1) \in \dom \kappa$. 
 Using the inequality $\log_+ y \le \gamma^{-1} y^{\gamma}$ for all $y\ge 0$ and Jensen's inequality, we have  
\[
\mBigl( \sum_{i\ge 2} p_i^{\omega}  \mr) \mBigl(\log_+\mbigl(\sum_{i\ge 2} p_i^{\omega}\mr) \mr)^{\rho} \le \gamma^{-2} \Big( \sum_{i\ge 2} p_i p_i^{\omega-1} \Big)^{1+\rho \gamma} 
 \le \gamma^{-2} \sum_{i\ge 2} p_i^{\omega+\rho \gamma(\omega-1)}.
\]
Since $\omega + \rho \gamma (\omega -1) \in \dom \kappa$,  by \eqref{eq:dom} we deduce \eqref{e:h-dm}. 

\skippar
We now come back to the proof of the proposition. 
		 By \cite[Lemma 4.2]{Shi-BRW} it suffices to show that 
      \begin{equation}\label{e:liminf}
	\liminf_{t\to \infty}  \lQDc \mbigl[ \MD_a(\ws,t) \mm\vert \GG_{\infty} \mr]<\infty, \quad \lQDc \text{-a.s.},
      \end{equation}
      where $\GG_{\infty}:= \sigma(\Zz_{U(t)}(t),U(t), t\geq 0) \subset \hat{\FF}_{\infty}$. 
Recall that $\lQDc$ is related to $\lQc$ via the change of measure \eqref{e:cm-lambda}, and that 
$\Zz$ under $\lQc$ can be described as a decorated spine process as in \autoref{d:decorated-spine}. With notation therein, we claim that 
\begin{equation}\label{e:Q=P}
\lQDc\mbigl[\MD_a(\ws,t)\mm\vert \GG_{\infty} \mr]=S_t, \qquad \lQDc-a.s. , 
\end{equation}
where, with $\lambda(t)= a - \xi(t) + t\kappa'(\ws)$ and $\zeta =  \inf\{ t\ge 0: \lambda(t) < 0 \}$, 
\begin{multline}
S_t:= \lambda(t) e^{a\ws  - \ws\lambda(t)}\Indic{t<\zeta} \\
+ \int_{[0,t]\times(0,1)\times\NN\times\Pp}
e^{a\ws  - \ws\lambda(r-)} \Indic{r<\zeta}  \sum_{j \ne i} (\lambda(r-)- \log p_j)  p_j^{\bar{\omega}} \, N(\dd r,\dd y, \dd i, \dd \pp).  \label{e:MD-g}
\end{multline}

We postpone for a moment the proof of \eqref{e:Q=P} and turn our attention to $S_t$. 
Let 
\[
X:= \sum_{i\ge 2} p_i^{\ws}, \quad \text{and}\quad \tilde{X} := \sum_{i\ge 2} p_i^{\ws-\epsilon}. 
\]
Fix $\theta\in (0,\ws-\epsilon)$, let 
\begin{eqnarr*}
A_t&:=& \lambda(t) e^{a\ws  - \ws\lambda(t)}\Indic{t<\zeta},\\
B_t&:=&  \int_{[0,t]\times(0,1)\times\NN\times\Pp}
e^{a\ws  - \ws\lambda(r-)} \Indic{e^{\theta\lambda(r-)}>X} \Indic{e^{\theta\lambda(r-)}> \tilde{X}}
 \sum_{j \ne i} (\lambda(r-)- \log p_j)  p_j^{\bar{\omega}} \, N(\dd r,\dd y, \dd i, \dd \pp),\\
D_t&:=&\int_{[0,t]\times(0,1)\times\NN\times\Pp}
e^{a\ws  - \ws\lambda(r-)} \Indic{e^{\theta\lambda(r-)}\le X} \Indic{i\ne 1}
\sum_{j \ne i} (\lambda(r-)- \log p_j)  p_j^{\bar{\omega}} \, N(\dd r,\dd y, \dd i, \dd \pp),\\
D'_t&:=&\int_{[0,t]\times(0,1)\times\NN\times\Pp}
e^{a\ws  - \ws\lambda(r-)}  \Indic{e^{\theta\lambda(r-)}\le \tilde{X}} \Indic{i\ne 1}
\sum_{j \ne i} (\lambda(r-)- \log p_j)  p_j^{\bar{\omega}} \, N(\dd r,\dd y, \dd i, \dd \pp).\\
E_t&:=&\int_{[0,t]\times(0,1)\times\NN\times\Pp}
e^{a\ws  - \ws\lambda(r-)}  \Indic{i= 1}
\sum_{j \ne i} (\lambda(r-)- \log p_j)  p_j^{\bar{\omega}} \, N(\dd r,\dd y, \dd i, \dd \pp).
\end{eqnarr*}
Then it is clear that 
\[
S_t \le A_t+B_t+D_t + D'_t + E_t. 
\]
We shall study the asymptotics of the five terms separately. 

      \skippar
Let us start with $B_t$. 
Using the compensator of $N$ under $\lQDc$ given in \autoref{l:cN} and \autoref{d:eta}, we deduce that
\[
\lQDc \mbigl[  B_t \mr]
=   \int_{[0,t]} \lQDc \mBigl[e^{a\ws  - \ws\lambda(r)} 
 \mbigl(C_1 \lambda(r)+ C_0 +C_{-1}\lambda(r)^{-1}\mr)\mr]\, \dd r,
\]
      where $C_1$, $C_0$ and $C_{-1}$ are given by
      \[
      C_1 := \int_{\Pp\setminus \Pp_1} \Indic{e^{\theta\lambda(r)}>X} \Indic{e^{\theta\lambda(r)}> \tilde{X}} \mbigl( \sum_{i\ge 1}\sum_{j \neq i}p_i^{\ws}p_j^{\ws}\mr)\nu(\dd \pp),
      \]
      \[
      C_0 := -\int_{\Pp\setminus \Pp_1} \Indic{e^{\theta\lambda(r)}>X} \Indic{e^{\theta\lambda(r)}> \tilde{X}} \mbigl( \sum_{i\ge 1}\sum_{j \neq i} p_i^{\ws}p_j^{\ws}(\log p_i + \log p_j) \mr)\nu(\dd \pp),
      \]
      \[
      C_{-1} := \int_{\Pp\setminus \Pp_1} \Indic{e^{\theta\lambda(r)}>X} \Indic{e^{\theta\lambda(r)}> \tilde{X}} \mbigl( \sum_{i\ge 1}\sum_{j \neq i} p_i^{\ws}p_j^{\ws}\log p_i\log p_j\mr)\nu(\dd \pp).
      \]
         By the inequality $|\log y| \le \epsilon^{-1} y^{-\epsilon}$ for $y\in [0,1]$, there is
                     \[  \Indic{e^{\theta\lambda(r)}>\tilde{X}}  \sum_{i\geq 1}\sum_{j\neq i}p_i^{\ws}p_j^{\ws}\log p_i\log p_j
        \le \epsilon^{-2}\Indic{e^{\theta\lambda(r)}>\tilde{X}}  \sum_{i\geq 1}\sum_{j\neq i}p_i^{\ws-\epsilon}p_j^{\ws-\epsilon}
   	\le     
      \epsilon^{-2}  \Indic{e^{\theta\lambda(r)}>\tilde{X}}  (\tilde{X}+2 p_1^{\ws-\epsilon}) \tilde{X}.
      \]
	We also note that $p_1^{\ws-\epsilon}\le 1 $. 
	It follows that 
	\[C_{-1}
        \le 
      \epsilon^{-2}    \mbigl(2+e^{\theta\lambda(r)} \mr) \int_{\Pp\setminus \Pp_1} \tilde{X} \, \nu(\dd \pp) .
      \]
	As $\ws-\epsilon\in \dom \kappa$, by \eqref{eq:dom} we have  
	$C_{-1}\le c_{-1} (e^{\theta\lambda(r)} + 1)$, with $c_{-1}>0$ a finite constant. 
      Similarly, we can prove that $C_0\le c_0 (e^{\theta\lambda(r)}+1)$ and $C_{1}\le c_{1} (e^{\theta\lambda(r)}+1)$, with $c_0,c_{1}>0$ finite constants. 
      Take $\bar{c}:= \max(c_1, c_0, c_{-1})>0$, then 
        \[
        	 \lQDc \mbigl[ B_t  \mr] 
        	\leq \bar{c}e^{a\ws} \int_{[0,t]} \dd r \lQDc\mBigl[( e^{- (\ws-\theta)\lambda(r)} +e^{- \ws\lambda(r)})\Indic{r< \zeta }  \mbigl(\lambda(r)+ 1 +\lambda(r)^{-1} \mr) \mr]. 
		   \]
      By \autoref{l:lambda-int}, we deduce that
      \[
 \liminf_{t\to \infty}        \lQDc[  B_t  ]<\infty.
      \] 	     
      Using similar arguments, that we omit for conciseness, we can deduce that $ \liminf_{t\to \infty}        \lQDc [ E_t  ]<\infty$. 
      By \autoref{l:lambda-int}, we also see that 
       $  \liminf_{t\to \infty}\lQDc [   A_t  ]<\infty. $ 	   
   Then Fatou's lemma yields 
   \[
   \liminf_{t\to \infty}   (A_t + B_t +E_t) <\infty, \qquad  \qquad \lQDc-a.s. 
   \]
       
        \skippar
      We next estimate $D_t$. To this end, let us consider 
      \[
      H_{\infty}:=\int_{[0,\infty)\times(0,1)\times\NN\times\Pp}
       \Indic{e^{\theta\lambda(r-)}\le X}\Indic{i\ne 1} \, N(\dd r,\dd y, \dd i, \dd \pp).
      \]
      Using again \autoref{l:cN}, \autoref{d:eta} and the inequality $|\log y| \le \epsilon^{-1} y^{-\epsilon}$ for $y\in [0,1]$, we deduce that
\[
\lQDc \mbigl[  H_{\infty} \mr]
\le \int_{\Pp} \mbiggl( 
				\int_{[0,\infty)} \lQDc \mBigl[ 
 					\Indic{e^{\theta\lambda(r)}\le X}	
 					\mbigl( X + \lambda(r)^{-1} \epsilon^{-1}\tilde{X} \mr) 
  				\mr] \, \dd r 
  			\mr) \nu(\dd \pp). 
\]
By similar arguments as in the proof of \autoref{l:lambda-int}, with notations therein, we obtain that 
\[
\int_{[0,\infty)} \lQDc \mBigl[ 
 					\Indic{e^{\theta\lambda(r)}\le X}	\mbigl(  X + \lambda(r)^{-1} \epsilon^{-1}\tilde{X}     \mr) 
  				\mr] \, \dd r 
  				= k \int_0^{\theta^{-1}\log_+ X} \frac{y}{a}\mbigl(  X + y^{-1} \epsilon^{-1} \tilde{X}     \mr)  (\WW(y)-\WW(y-a)) \, \dd y.
\]
we know that
      $\WW(y)-\WW(y-a) \to ac/{m_+}$ as $y \to \infty$, where $m_+$ is the mean of the ascending
      ladder height process of $\lambda$ and $c$ is a meaningless constant. 
      This implies that, there exists a constant $C_3$ large enough, such that
      \[
 \lQDc \mbigl[ H_{\infty} \mr] \le
 C_3 \int_{\Pp} \mBigl(  
  \theta^{-2}   X   ( \log_+ X )^2 +  \theta^{-1}   \tilde{X}  \log_+ X 
 \mr) \, \nu(\dd \pp).
      \]      
       Since $\tilde{X}  \log_+ X \le (X  \log_+ X + \tilde{X}  \log_+ \tilde{X})$,  by \eqref{e:h-dm} the right-hand-side of the above expression is finite.
       Hence $H_{\infty}$ is $\lQDc$-a.s. finite, which yields that
       $\sup_{t\ge 0} D_t<\infty$ holds $\lQDc$-a.s.
       Indeed, 
\[
       \sup_{t\ge 0} D_t \le \int_{[0,\infty) \times(0,1)\times\NN\times\Pp}
e^{a\ws  - \ws\lambda(r-)} 
\Big| \sum_{j \ne i} (\lambda(r-)- \log p_j)  p_j^{\bar{\omega}} \Big| \, \Indic{e^{\theta\lambda(r-)}\le X}\Indic{i\ne 1} N(\dd r,\dd y, \dd i, \dd \pp).\\
\]
The right-hand-side is an integral over a random point measure, whose total mass is $H_{\infty}$. 
So the fact that $H_{\infty}$ is $\lQDc$-a.s. finite yields that the integral is $\lQDc$-a.s. a finite sum. 

       In the same manner, we can also deduce that $\sup_{t\ge 0} D'_t<\infty$ holds $\lQDc$-a.s.
       This would require that $\int_{\Pp} X  (\log_+ \tilde{X})^2 \, \nu(\dd \pp)<\infty$, which is again a consequence of \eqref{e:h-dm}. 
      Having assumed \eqref{e:Q=P}, this completes the proof of \eqref{e:liminf}.  
      
      \skippar
      It remains to justify \eqref{e:Q=P}. 
      We first consider $\lEQc \mbigl[ \MD_a(\ws,t) \mm\vert \GG_{\infty} \mr]$. 
      Recall that $\Zz$ under $\lQc$ can be described as a decorated spine process as in \autoref{d:decorated-spine}. With notation therein, we have $\lambda(t)= a - \xi(t) + t\kappa'(\ws)$. Notice that each $\Zz_u(t)$ with $u\in \tree_{t}$ corresponds bijectively to a $\Zz^{[r,j]}_v(t-r)$ with $v\in \tree^{[r,j]}_{t-r}$, such that
      \[
      \Zz_u(t) = \Zz^{[r,j]}_v(t-r) + \xi(r-)+\log p_j = \Zz^{[r,j]}_v(t-r) - \lambda(r-)+ r\kappa'(\bar{\omega}) + a +\log p_j.
      \]
      Replacing $\Zz_u(t)$ in $\MD_a(\ws,t)$ by the right-hand-side of the identity above, conditioning to $\GG_{\infty}$ and using the fact that $\ws \kappa'(\ws) =\kappa(\ws)$, we have that
      \begin{eqnarr*}
\eqnarrLHS{ \lEQc\mbigl[\MD_a(\ws,t)\mm\vert \GG_{\infty} \mr] -\lambda(t) e^{a\ws  - \ws\lambda(t)}\Indic{t<\zeta}}
&=& \int_{[0,t]\times(0,1)\times\NN\times\Pp}
     e^{a\ws  - \ws\lambda(r-)} \Indic{r<\zeta}  \sum_{j \ne i} \lEQc\mbigl[\MD^{[r,j]}(a^{[r,j]}, \ws,t-r)\mm\vert \GG_{\infty} \mr]  p_j^{\bar{\omega}} \, N(\dd r,\dd y, \dd i, \dd \pp), 
      \end{eqnarr*}
      where $\zeta := \inf\{s\geq 0: \lambda(s)\leq 0\}$, $a^{[r,j]}:=\lambda(r-)- \log p_j$, and $\MD^{[r,j]}(a^{[r,j]}, \ws,t-r)$ denotes the stopped derivative martingale of the branching L\'evy process $\Zz^{[r,j]}$, i.e.  
\[\sum_{v\in \tree^{[r,j]}_{t-r}} \mbigl( a^{[r,j]} +(t-r) \kappa'(\bar\omega) -\Zz^{[r,j]}_v(t-r)  \mr) e^{-(t-r) \kappa(\bar\omega) + \bar\omega\Zz^{[r,j]}_v(t-r)}\Indic{ -s \kappa'(\bar\omega) + \Zz^{[r,j]}_{ \Anc(s,v)}(s)<a^{[r,j]}, \forall s\in[0,t-r]}.
\]
By the independence of $\Zz^{[r,j]}$ and $\GG_{\infty}$, we have the identity  \[\lEQc\mbigl[\MD^{[r,j]}(a^{[r,j]}, \ws,t-r)\mm\vert \GG_{\infty} \mr] = a^{[r,j]}=\lambda(r-)- \log p_j. \]
Summarizing, we have that $\lEQc \mbigl[\MD_a(\ws,t)\mm\vert \GG_{\infty} \mr]$ is equal to  $S_t$ as in \eqref{e:MD-g}. 

We can now prove \eqref{e:Q=P}. 
For every $s>t$, let $\GG_{s}:= \sigma(\Zz_{U(r)}(r),U(r), r\in [0,s]) \subset \hat{\FF}_{s}$. 
By the change of measure \eqref{e:cm-lambda}, for every $A\in \GG_{s}$ we have that 
 \begin{eqnarr*}
\lQDc \mbigl[\MD_a(\ws,t) \Ind_{A} \mr]
& =&a^{-1} \lEQc\mbigl[ \MD_a(\ws,t) \lambda(s)\Indic{s<\zeta}\Ind_{A}  \mr]\\
&=&a^{-1}\lEQc\mBigl[ \lEQc\mbigl[\MD_a(\ws,t)\mm\vert \GG_{\infty} \mr]\lambda(s)\Indic{s<\zeta}\Ind_{A} \mr],  \\
&=& a^{-1}\lEQc\mBigl[ S_t\lambda(s)\Indic{s<\zeta}\Ind_{A} \mr],
 \end{eqnarr*}
where the second equality is due to the fact that $\lambda(s)\Indic{s<\zeta}$ is also $\GG_{\infty}$-measurable. Since \eqref{e:MD-g} shows that $S_t$ is $\hat{\FF}_t$-measurable, it is also $\hat{\FF}_s$-measurable for $s>t$. Using again the change of measure \eqref{e:cm-lambda}, we have
\[\lQDc\mbigl[\MD_a(\ws,t)\mm\vert \GG_{s} \mr]=S_t, \qquad \lQDc-a.s.\]
 Letting $s\to \infty$, L\'evy's zero-one law leads to \eqref{e:Q=P}. 
\end{proof}

\subsection{Proof of \autoref{t:DerMart}}\label{s:proofD}
We are now ready to prove \autoref{t:DerMart}. We tackle each part separately.

\bigskip	
(i) 
    For every $a>0$, it is clear that $\MD(\omega,t)$ is equal to $a \MA(\omega, t)-\MD_a(\omega,t)$ for all $t\ge 0$ in the event
    \[ B_{\omega, a}:= \mbigl\{    \sup_{t\geq 0}\mbigl(\sup_{u\in\tree_t} \Zz_u(t)- \kappa'(\omega) t \mr)<a \mr\}.\] 
    We know from \citet[Theorem 2.3(ii)]{Dadoun:agf}\footnote{Though \citet[Theorem 2.3]{Dadoun:agf} has an extra condition in the form of his equation~(2.7), it is only required for the proof of part (i) of that theorem, and not part (ii). Part (ii), which we use here, still holds under the broader assumptions that we make.} or \cite[Theorem~1.1]{BM-mcbLp} that for the additive martingale the following convergence holds $\lP$-almost surely:  
    \begin{equation}\label{e:am}
    \lim_{t\to \infty} \mg{\omega}{t} = 0, \qquad \text{for any}~\omega\ge \ws. 
    \end{equation}
    Then $\MD(\omega,t)$ converges to a finite non-positive limit  
    \begin{equation}\label{e:Ba}
    \MD(\omega,\infty) := -\MD_a(\omega,\infty), \text{ in the event } B_{\omega,a}. 
    \end{equation}
    On the other hand, since 
    \[
    \sup_{u\in\tree_t} \Zz_u(t)- \omega^{-1}\kappa(\omega) t \leq\omega^{-1} \log   \mg{\omega}{t}, \qquad \text{for every } t>0,
    \]
    and $\kappa'(\omega)\ge \omega^{-1}\kappa(\omega)$, letting $t\to \infty$, we deduce that
    \begin{equation}\label{e:Z1}
    \lim_{t\to \infty} \mbigl( \sup_{u\in\tree_t} \Zz_u(t)- \kappa'(\omega) t\mr)= -\infty,\qquad \lP-\text{almost surely}.
    \end{equation} 
    Then $\lP \mbigl(\lim_{a\uparrow \infty} B_{\omega,a}  \mr)= 1$ as a consequence. We hence conclude that $\MD(\omega,t)$ converges $\lP$-almost surely to a finite non-positive limit. 
     
    \bigskip
    (ii)
    As $\omega>\ws$, it follows from \eqref{e:Ba} and \autoref{l:MDa} that $\MD(\omega,\infty)=0$ in $B_{\omega,a}$ for every $a>0$. Since $\lP \mbigl(\lim_{a\uparrow \infty} B_{\omega,a}  \mr)= 1$, we deduce that $\MD(\omega,\infty) = 0$ holds $\lP$-almost surely. 
    
    \bigskip
    (iii)
	For every $a>0$, we observe from \eqref{e:Z1} that $\lP$-almost surely
      \[
      \liminf_{t\to \infty} \inf_{u\in \tree_t} \mBigl[\mbigl( a +t \kappa'(\ws) -\Zz_u(t)  \mr) e^{-t \kappa(\ws) + \ws\Zz_u(t)} \mr]\geq 0,\]
      which entails that $\lP$-almost surely
      \[\lim_{t\to \infty}  (a\MA(\ws,t) - \MD(\ws,t)) \geq\lim_{t\to \infty} \MD_a(\ws,t).\] 
      We hence deduce from \eqref{e:am} and \autoref{l:MDa} that 
      \[ \lE[\MD(\ws,\infty)] \leq  \lE[-\MD_a(\ws,\infty)] = -a. \]
      Since $a>0$ is arbitrary, we readily have $ \lE[\MD(\ws,\infty)] = -\infty$.   
      
      It remains to prove that $\MD(\ws,\infty) <0$, $\lP^*$-almost surely. The following arguments are modified from the proof of Proposition 8~(iii) in \cite{BR-disc}. For every $v \in \tree$ and $t\geq 0$, denote 
      \[\MD^{(v)}(\ws,t):=  e^{-t\kappa(\ws)}
      \sum_{u\in \tree_{t+1}, v \preceq u, b_u > 1} \mbigl(\Zz_u(t+1) -\Zz_v(1) - t\kappa'(\ws)\mr) e^{\ws (\Zz_u(t+1) - \Zz_v(1))}
      \]
and
\[\MA^{(v)}(\ws,t):= e^{-t\kappa(\ws)}\sum_{u\in \tree_{t+1}, v \preceq u, b_u > 1}  e^{\ws (\Zz_u(t+1)-\Zz_v(1))},\]
with convention $\MD^{(v)}(\ws,t)= \MA^{(v)}(\ws,t) = 0$ whenever $v\not\in \tree_1$.  
      Then we have the following decomposition: 
      \begin{equation}\label{e:t+1}
      \MD(\ws,t+1)= e^{-\kappa(\ws)}\mbiggl( \sum_{v\in \tree_1} e^{\ws \Zz_v(1) }\MD^{(v)}(\ws,t)
      + \sum_{v\in \tree_1} e^{\ws \Zz_v(1) }(\Zz_v(1) -\kappa'(\ws) ) \MA^{(v)}(\ws,t) \mr).
      \end{equation}

      Let us start with proving the following technical result: 
      \begin{equation}\label{e:cvto0}
        \sum_{v\in \tree_1} e^{\ws \Zz_v(1) }(\Zz_v(1) -\kappa'(\ws) ) \MA^{(v)}(\ws,t) \underset{t\to \infty}{\longrightarrow} 0  \quad \text{ in probability with respect to }\lP.
      \end{equation}
      Let $c>0$ be small enough such that $\ws-c\in \dom \kappa$. One observes that $|\log y| \leq  c^{-1} (y^{-c} + y^{c})$ for every $y>0$, then
      \begin{multline*}
			\lE \mbiggl[ \sum_{u\in \tree_1} |\Zz_u(1)-\kappa'(\ws)| e^{\ws \Zz_u(1)} \mr] 
            \le c^{-1}\lE \mbiggl[ \mBigl(\sum_{u\in \tree_1} \mbigl( e^{(\ws+c) \Zz_u(1)} + e^{(\ws-c) \Zz_u(1)}+ |\kappa'(\ws)| e^{\ws \Zz_u(1)} \mr) \mr)  \mr]. 
      \end{multline*}
      The second expectation is finite, so is the first one.  
      Fix an enumeration of $\tree$ and denote for every $u\in \tree$ its index by $I_u\in \NN$. Then for every $\epsilon, \delta>0$, there exists $n_0\in \NN$, depending on $\epsilon$ and $\delta$, such that 
      \[  \lE \mbiggl[ \sum_{v\in \tree_1, I_v>n_0} |\Zz_v(1)-\kappa'(\ws)| e^{\ws \Zz_v(1)} \mr] \leq \delta \epsilon. 
      \]
Furthermore, by conditioning on $\FFs_1$ and using the branching property, \autoref{l:Zbar-bp}, we deduce the identity
     \[  \lE \mbiggl[ \sum_{v\in \tree_1, I_v>n_0} |\Zz_v(1)-\kappa'(\ws)| e^{\ws \Zz_v(1)} \MA^{(v)}(\ws,t) \mr]= \lE \mbiggl[ \sum_{v\in \tree_1, I_v>n_0} |\Zz_v(1)-\kappa'(\ws)| e^{\ws \Zz_v(1)} \mr]. 
      \]
      Then an application of Markov inequality yields 
      \[  \lP \mbiggl[ \sum_{v\in \tree_1, I_v>n_0} |\Zz_v(1)-\kappa'(\ws)| e^{\ws \Zz_v(1)} \MA^{(v)}(\ws,t) > \delta \mr] \leq  \epsilon. 
      \]
      Moreover, we see from \eqref{e:am} that each $\MA^{(v)}(\ws,t)$ converges $\lP$-almost surely to $0$. It follows that
      \[  \sum_{v\in \tree_1, I_v\leq n_0} |\Zz_v(1)-\kappa'(\ws)| e^{\ws \Zz_v(1)}\MA^{(v)}(\ws,t) \underset{t\to\infty}{\longrightarrow} 0, \qquad \lP\text{-a.s. }
      \]
      Hence we have proved \eqref{e:cvto0}. 
   
We now go back to \eqref{e:t+1}. By the branching property, $(\MD^{(v)}(\ws,\cdot), v\in \tree_1)$ are independent copies of $\MD(\ws, \cdot)$, also independent of $\FFs_1$. Then we see from part~(i) that each $\MD^{(v)}(\ws,t)$ converges $\lP$-almost surely to a non-positive limit $\MD^{(v)}(\ws,\infty)$, which has the same law as $\MD(\ws,\infty)$. Letting $t\to \infty$ in \eqref{e:t+1} and using \eqref{e:cvto0}, we deduce that, for every $u\in \tree$,
\[
\MD(\ws,\infty)= \Indic{u\in \tree_1} e^{-\kappa(\ws)}e^{\ws Z_{u}(1) }\MD^{(u)}(\ws,\infty)+R,\qquad  \lP\text{-a.s.},
\]
where \[R := \lim_{t\to \infty}e^{-\kappa(\ws)}\sum_{v\in \tree_1, v\ne u} e^{\ws \Zz_v(1) } \MD^{(v)}(\ws,t), \quad \text{ in probability with respect to }\lP.
\]
Since $\MD^{(u)}(\ws,\infty)$ is independent of $R$, the above identity entails that 
\[
\lP \mbigl( \MD(\ws,\infty) > 0 \mr)
\geq \rho \cdot \lP \mbigl( R >0\mr) , 
\]
where $\rho:=  \lP \mbigl( \MD(\ws,\infty) = 0 \mr) = \lP \mbigl( \MD^{(u)}(\ws,\infty) = 0 \mr)$.

Recall from part~(i) that $\MD(\ws,\infty)$ is non-positive, i.e. $\lP \mbigl( \MD(\ws,\infty) > 0 \mr) =0$. Suppose now that
$\rho >0$ (otherwise there is nothing to prove), then $\lP \mbigl( R > 0 \mr) = 0$. It follows that
\[
\MD(\ws,\infty)\leq \Indic{u\in \tree_1} e^{-\kappa(\ws)}e^{\ws Z_{u}(1) }\MD^{(u)}(\ws,\infty) ,\qquad  \lP\text{-a.s.}
\]
This inequality holds for every $u\in \tree$, we hence deduce by the independence of the family $\big( \MD^{(u)}(\ws,\infty), u\in \tree\big)$ that 
\begin{equation}\label{e:extinct}
\rho \leq \lE\mbigl[ \rho^{\# \tree_1}\mr] , \end{equation}
with $\# \tree_1$ the number of particles at time $1$. It has been proved in (ii) that
$\lE \mbigl[ \MD(\ws,\infty) \mr] = -\infty$, so we also have $\rho <1$. 
Hence \eqref{e:extinct} entails that $\rho =\lP \mbigl( \MD(\ws,\infty) = 0 \mr)$ is smaller or equal to the extinction probability.    
On the other hand, it is clear that the extinction event is included in $\{\MD(\ws,\infty) = 0\}$.  
We conclude that $\MD(\ws,\infty) < 0$, $\lP^*$-almost surely.

  This completes the proof.

\section*{Acknowledgements} 
We would like to thank Jean Bertoin and Andreas E. Kyprianou for their helpful comments
on earlier drafts of this work.
Q.S.\ was supported by math--STIC of Paris University 13 and the SNSF fellowship P2ZHP2\_171955. 
This work has been much improved by the careful reading and comments of two
anonymous referees.

  \bibliography{master}
  
\end{document}